\newtheorem{thm}{Theorem}
\newtheorem{lem}[thm]{Lemma}
\newtheorem{prop}[thm]{Proposition}
\theoremstyle{remark}
\newtheorem{rmk}[thm]{Remark}
\newtheorem{example}[thm]{Example}
\theoremstyle{definition}
\newtheorem{defi}[thm]{Definition}
\numberwithin{thm}{section} 
\numberwithin{equation}{section}
\newcommand{\Rmnum}[1]{\expandafter\@slowromancap\romannumeral #1@}
\def\R{{\mathbb R}}
\def\X{{\mathbf X}}
\def\M{{\mathcal M}}
\def\N{{\mathcal N}}
\def\S{{\mathcal S}}
\def\E{{\mathcal E}}
\def\O{{\mathcal O}}
\def\V{{\mathcal V}}
\newcommand{\pO}{\partial\Omega}
\newcommand{\Oba}{\overline{\Omega}}
\newcommand{\vep}{\varepsilon}
\newcommand{\ol}{\overline}
\newcommand{\ul}{\underline}
\newcommand{\dive}{\operatorname{div}}
\newcommand{\bpm}{\begin{pmatrix}}
\newcommand{\epm}{\end{pmatrix}}
\newcommand{\la}{\langle}
\newcommand{\ra}{\rangle}
\newcommand{\beq}{\begin{equation}}
\newcommand{\eeq}{\end{equation}}
\title[Infinity-eigenvalue in metric spaces]{\protect{Principal eigenvalue problem\\ 
for infinity Laplacian in metric spaces}}
\author{Qing Liu}
\address[Qing Liu]{Geometric Partial Differential Equations Unit, Okinawa Institute of Science and Technology 
Graduate University, Japan, {\tt qing.liu@oist.jp}}
\author{Ayato Mitsuishi}
\address[Ayato Mitsuishi]{Department of Applied Mathematics, Faculty of Science, Fukuoka University, Japan, {\tt mitsuishi@fukuoka-u.ac.jp}}
\date{\today}
\begin{document}

\begin{abstract}
This paper is concerned with the Dirichlet eigenvalue problem associated to the $\infty$-Laplacian in metric spaces. We establish a direct PDE approach to find the principal eigenvalue and eigenfunctions in a proper geodesic space without assuming any measure structure. We provide an appropriate notion of solutions to the $\infty$-eigenvalue problem and show the existence of solutions by adapting Perron's method. Our method is different from the standard limit process via the variational eigenvalue formulation for $p$-Laplacian in the Euclidean space. 
\end{abstract}

\subjclass[2020]{35P30, 35R02, 35D40}
\keywords{eigenvalue problems, metric spaces, infinity Laplacian, eikonal equation, viscosity solutions}

\maketitle

\section{Introduction}
\subsection{Background and motivation}

In this paper, we consider the principal eigenvalue and eigenfunctions associated to the $\infty$-Laplacian with homogeneous Dirichlet boundary condition in metric spaces. One of our major contributions is a general framework that can be applied to study this eigenvalue problem in a large variety of metric spaces. Throughout this paper, the metric space $(\X, d)$ is assumed to satisfy the following two conditions:
\begin{itemize}
\item $(\X, d)$ is a geodesic space, namely, for any $x, y\in \X$, there exists a Lipschitz curve $\gamma: [a, b]\to \X$ such that $\gamma(a)=x, \gamma(b)=y$ and 
$d(x, y)=\ell(\gamma)$,
where $\ell(\gamma)$ stands for the length of $\gamma$. 

\item $(\X, d)$ is proper, that is, for any $x\in \X$ and $r>0$, the closed metric ball $\ol{B_r(x)}$ is compact. 
Here and in the sequel, we denote by $B_r(x)$ the open metric ball centered at $x$ with radius $r>0$. 
\end{itemize}

Before stating our main results, let us first go over the background on the topic and describe our motivation of this work. The study on the eigenvalue problem for the $\infty$-Laplacian is initiated by the work \cite{JLM1} (and also \cite{FIN}), where the limits of eigenvalue and eigenfunctions for the $p$-Laplacian as $p\to \infty$ are investigated in the Euclidean space. More precisely, for any given $1<p<\infty$ and a bounded domain $\Omega\subset \R^n$, via the Rayleigh quotient we can obtain the first $p$-eigenvalue, written as ${\Lambda_p}^p$ (the $p$-th power of $\Lambda_p$), by
\beq\label{p-rayleigh}
{\Lambda_p}^p=\min \left\{\frac{\int_\Omega |\nabla u|^p \, dx}{\int_\Omega |u|^p \, dx}: u\in W_0^{1, p}(\Omega)\setminus \{0\}\right\}.
\eeq
The minimizers for \eqref{p-rayleigh}, which solve $-\dive(|\nabla u|^{p-2}\nabla u)={\Lambda_p}^p|u|^{p-2}u$  in $\Omega$ with  $u=0$ on $\partial \Omega$, are called $p$-eigenfunctions or $p$-ground states.
It is shown \cite{JLM1, FIN} that $\Lambda_p$ 
converges as $p\to \infty$ to 
\beq\label{inf-eigenvalue}
\Lambda_\infty={1\over R_{\infty}} ,
\eeq
where $R_\infty>0$ is the radius of the maximum ball inscribed in $\Omega$:
\[
R_\infty=\max_{x\in \Oba} \min_{y\in \pO} |x-y|.
\]
Thus, the value $\Lambda_\infty$ is considered as the principal eigenvalue of the $\infty$-Laplacian and is called $\infty$-eigenvalue.  

Moreover, it is also proved in \cite{JLM1, FIN} that a subsequence of normalized $p$-eigenfunctions converges uniformly, as $p\to \infty$,  to  a positive viscosity solution $u$ of the following obstacle problem
\begin{numcases}{}
\min\{|\nabla u|-\Lambda_\infty u, \ -\Delta_\infty u\}=0&\quad \text{in $\Omega$,}\label{inf-eigen}  \\
u= 0&\quad \text{on $\pO$,} \label{dirichlet}
\end{numcases}
where $\Delta_\infty u$ denotes the $\infty$-Laplacian of $u\in C^2(\Omega)$, i.e., 
$\Delta_\infty u=\la \nabla^2 u\nabla u, \nabla u\ra$.
Such positive solutions obtained from the limit process are called $\infty$-ground states in the literature.  The higher eigenvalues and eigenfunctions arising in the limit are studied in \cite{JuLi}. An interesting observation about \eqref{inf-eigen} is that the eigenvalue $\Lambda_\infty$ appears in the first-order component rather than the $\infty$-Laplacian part of the minimum. Throughout this paper we call $|\nabla u|-\lambda u$ with $\lambda>0$ the eikonal operator on $u$. One can see at least formally that, under the transformation $U=\log u$, $|\nabla u|=\lambda u$ can be expressed as the standard eikonal equation $|\nabla U|=\lambda$.

The above results on the principal $\infty$-eigenvalue problem are later developed for more general nonlinear elliptic operators \cite{BeKa, ChPe, PeRo, ChPa, LiLi1, BoRoSa, DaRoSa} and for more general boundary conditions \cite{GaMaPeRo2, Pat, EsKaNiTo}. However, on the other hand, less is known about this problem in general geometric settings. We refer to \cite{BKJ} for generalization under the Finsler metrics. In metric measure spaces, the convergence of $\Lambda_p$ to $\Lambda_\infty$ is addressed in \cite{AHo, Hon} and results related to the limit for higher $p$-eigenvalues are recently provided by the second author \cite{Mit}. In these results the equipped measure structure plays a fundamental role to allow the variational approach. 

It is worth pointing out that the full convergence of normalized $p$-eigenfunctions as $p\to \infty$ is still unclear in general even in the Euclidean space. This is related to the long standing open question on the uniqueness of $\infty$-ground states up to a multiplicative factor, or in other words, the simplicity of the $\infty$-eigenvalue $\Lambda_\infty$. An affirmative result is given by Yu \cite{Yu1} for $\Omega\subset \R^n$ in certain particular shapes when the distance to $\pO$ is a viscosity solution of \eqref{inf-eigen} and \eqref{dirichlet}. Rather than investigating the convergence of $p$-eigenfunctions, one may try to directly prove the uniqueness of viscosity solutions of \eqref{inf-eigen} and \eqref{dirichlet} up to a constant multiple.
However, a counterexample has been constructed in \cite{HSY} to show that it fails in general. 
In addition to \cite{Yu1, HSY}, further progress has been made toward this uniqueness problem; see, for instance, \cite{CrFr5, CrFr4, DaRoSa, LiLi2, LiLi3, LiLi4}. A characterization of the $\infty$-eigenvalue problem is provided by \cite{ChDeJi, BuK} based on the optimal transport theory. The connection between the infinity Laplacian and mass transfer problems through convex duality is addressed in earlier papers such as \cite{EG, BBD, GaMaPeRo1}. 

In this work, from a more geometric perspective,  we look into the $\infty$-eigenvalue problem in general metric spaces with minimal structure assumptions. Our approach actually applies to an arbitrary proper geodesic space without any measure structure, which constitutes a major difference from the known results.

Our study is motivated by the following observations. First, the $\infty$-eigenvalue $\Lambda_\infty$ in \eqref{inf-eigenvalue} is a completely geometric quantity and requires nothing more than the space metric. We naturally expect that in a general metric space $(\X, d)$, the expression of the $\infty$-eigenvalue turns into
\beq\label{inf-eigenvalue general}
\Lambda_\infty=\frac{1}{\max_{x\in \Oba} d(x, \partial \Omega)}.
\eeq
The denominator represents the radius of the maximum inscribed metric ball, which we still denote by $R_\infty$, i.e., 
\beq\label{inrad}
R_\infty=\max_{x\in \Oba} d(x, \partial \Omega).
\eeq
Here and in the sequel, $d(x, E)$ denotes the distance from a point $x\in \X$ to a compact set $E\subset \X$, namely, $d(x, E)=\min_{y\in E} d(x, y)$.

Second, both the eikonal operator and the $\infty$-Laplacian appearing in the nonlinear obstacle problem \eqref{inf-eigen} can be understood under merely the length structure. 
In recent years, several notions of solutions to the eikonal equation in geodesic or length spaces are proposed \cite{GHN, GaS, LShZ}. Concerning the $\infty$-Laplace equation, we refer to \cite{PSSW} for a tug-of-war game interpretation in length spaces. It is also well known that $\infty$-harmonic functions in the Euclidean space can be characterized by comparison with cones \cite{CEG, ArCrJu, AS2}. This characterization is extended to sub-Riemannian manifolds \cite{BiDrMa, Bi4} and general metric spaces \cite{Ju, JuSh}.  None of these results essentially require measures on the spaces. 

Besides, the Dirichlet eigenvalue problem can be set up without relying on the limit process via $p$-Laplacian. Recall that in the Euclidean space the principal eigenvalues for linear elliptic operators are found \cite{BNV} without using the Rayleigh quotient but the maximum principle; see \cite{BiDe1, BiDe2} for further results on the principal eigenvalue problem for fully nonlinear equations.
The eigenvalue is characterized as the maximum value of $\lambda\in \R$ that admits existence of positive viscosity supersolutions.

Based on the observations above, we establish a new approach to the eigenvalue problem that is applicable to general geodesic spaces. Our strategy is as follows. 
Instead of passing to the limit for the $p$-eigenvalue problem as $p\to \infty$ as in \cite{JLM1}, we follow \cite{BiDe1, BiDe2} to investigate, in a more straightforward manner, the maximum value $\lambda$ that guarantees existence of positive supersolutions of
\beq\label{inf-eigen-lambda}
\min\{|\nabla u|-\lambda u, \ -\Delta_\infty u\}=0\quad \text{in $\Omega$.}
\eeq
We adopt this method to avoid the use of measures that are required to formulate the variational $p$-eigenvalue problem. It turns out that such a critical value $\lambda>0$ does coincide with $\Lambda_\infty$ as in \eqref{inf-eigenvalue general}. 
 
Once the eigenvalue $\Lambda_\infty$ is justified, we can discuss positive solutions of \eqref{inf-eigen} that also satisfy the boundary condition \eqref{dirichlet}. Such solutions are regarded as $\infty$-eigenfunctions in our general setting.   
Adapting Perron's method,  we construct solutions of \eqref{inf-eigen} by taking the infimum of all supersolutions satisfying appropriate conditions that essentially play the role of normalization. 
In order to make our PDE-based arguments above work, it is crucial to find an appropriate notion of solutions of \eqref{inf-eigen-lambda}, which we will clarify in a moment. 

 As in the Euclidean case, we do not know whether the $\infty$-eigenfunctions are unique up to a constant multiple in general. 
The setting becomes simpler when $\X$ is a finite metric graph, thanks to the finiteness and one-dimensional structure of the space. 
In our forthcoming work \cite{LMit2}, we study further properties of $\infty$-eigenfunctions and the simplicity of $\Lambda_\infty$ in this particular case. 

\subsection{Main results}

In order to present our main results, let us first introduce the notion of solutions to \eqref{inf-eigen} in geodesic spaces. Assume that $(\X, d)$ is a proper geodesic space. Let $\Omega\subsetneq \X$ be a bounded domain. We seek solutions of \eqref{inf-eigen-lambda} in the class of locally Lipschitz functions in $\Omega$. Let us briefly clarify our definitions of solutions below; see Section \ref{sec:def} for more precise descriptions. 

As usual, our definition can be divided into a supersolution part and a subsolution part. 
When defining a supersolution $u$, we require it to fulfill the supersolution properties for both the eikonal and the infinity Laplace equation. The former can be simply defined by
\beq\label{eikonal super}
|\nabla^- u|\geq \lambda u 
\eeq
in $\Omega$, where $|\nabla^- u|$ denotes the subslope of $u$, given by 
\[
|\nabla^- u|(x)=\limsup_{y\to x} \frac{\max\{u(x)-u(y), 0\}}{d(x, y)}.
\]
See also the definitions of slope $|\nabla u|$ and superslope $|\nabla^+ u|$ respectively in \eqref{slope eq} and \eqref{semislope eq}.

One can analogously use the subslope to define subsolutions and solutions of the eikonal  equation. This type of solutions of eikonal equations is called Monge solutions. Such a notion is studied in the Euclidean space \cite{NeSu, BrDa} and is introduced in \cite{LShZ} for general metric spaces. See \cite{LShZ} also for the equivalence with other notions of metric viscosity solutions proposed in \cite{AF, GHN, GaS2, GaS}. For the reader's convenience, we include several basic results on the eikonal equation in metric spaces in Appendix \ref{sec:app}.

Regarding the $\infty$-Laplacian supersolution property (or $\infty$-superharmonicity), we adopt the characterization of comparison with cones from below. This requires that an $\infty$-superharmonic function $u$ satisfy 
\[
\min_{\ol{\O}} (u-\phi)\geq \min_{\partial \O} (u-\phi)
\] 
for any open subset  $\O\subset\subset \Omega$ (i.e., $\ol{\O}\subset \Omega$)  and any cone function 
\beq\label{cone below}
\phi=a+\kappa d(\hat{x}, \cdot)\quad \text{in $\Omega$}
\eeq
with $a\in \R$, $\kappa\leq 0$ and $\hat{x}\in \Omega\setminus \O$. Consult \cite{Ju, JuSh} for more details on the properties of comparison with cones in connection with the absolute minimizing Lipschitz extensions in metric spaces. 

We consequently call $u$ a supersolution to \eqref{inf-eigen-lambda} if it satisfies both \eqref{eikonal super} and comparison with cones from below in $\Omega$. This notion looks very different from the usual viscosity supersolutions, since it is not defined pointwise by means of test functions. 
In contrast, we need to define subsolutions of \eqref{inf-eigen-lambda} pointwise and 
choose an appropriate class of test functions. While in the Euclidean case one can test a subsolution at $x_0\in \Omega$ by a $C^2$ function $v$ satisfying $-\Delta_\infty v(x_0)>0$ so as to obtain 
\beq\label{def sub eq eucl}
|\nabla v(x_0)|\leq \lambda u(x_0),
\eeq
finding the corresponding test class in general metric spaces is however not straightforward. We overcome the difficulty by adopting the class of $\infty$-superharmonic functions introduced above to test the candidate function in a strict manner. 

We say that $u$ is a subsolution of \eqref{inf-eigen-lambda} if whenever there exist an $\infty$-superharmonic function $v$ and $x_0\in \Omega$ such that $u-v$ attains a strict local maximum at $x_0$, we have
\beq\label{def sub eq}
\lim_{r\to 0+}\inf_{B_r(x_0)} |\nabla^- v|\leq \lambda u(x_0).
\eeq
The left hand side above looks slightly complicated. Actually \eqref{def sub eq} reduces to \eqref{def sub eq eucl} in $\R^n$ for a test function $v\in C^2(\Omega)$. In our current setting, the lower semicontinuous envelope of the subslope needs to be utilized due to the lack of smoothness of $v$. 

Our definitions of supersolutions and subsolutions of \eqref{inf-eigen-lambda} prove to be an appropriate generalization of those in the Euclidean case. Indeed, we can show the equivalence between both types of definitions in the case when $\X=\R^n$ based on the results in \cite{CEG,  ArCrJu, JuSh}; see Section \ref{sec:eucl} for details. 

Moreover, our new notions enable us to solve the $\infty$-eigenvalue problem in general geodesic spaces. Using our notion of supersolutions of \eqref{inf-eigen-lambda}, we define the $\infty$-eigenvalue in $\Omega$ by 
\beq\label{def-inf-eigenvalue}
\Lambda=\sup\left\{\lambda\in \R: \text{there exists a locally Lipschitz positive supersolution of \eqref{inf-eigen-lambda}}\right\}.
\eeq
This value turns out to coincide with \eqref{inf-eigenvalue general}, 
which is consistent with the Euclidean result. 


\begin{thm}[$\infty$-eigenvalue]\label{thm eigenvalue}
Suppose that $(\X, d)$ is a proper geodesic space and $\Omega\subsetneq \X$ is a bounded domain. Let $\Lambda_\infty>0$ be given by \eqref{inf-eigenvalue general}. 
Let $\Lambda$ be defined by \eqref{def-inf-eigenvalue}. Then $\Lambda=\Lambda_\infty$ holds.
\end{thm}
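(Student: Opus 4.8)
The plan is to prove the two inequalities $\Lambda \geq \Lambda_\infty$ and $\Lambda \leq \Lambda_\infty$ separately, where recall $\Lambda_\infty = 1/R_\infty$ with $R_\infty = \max_{x\in\Oba} d(x,\pO)$.

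\textbf{Step 1: The lower bound $\Lambda \geq \Lambda_\infty$.} Here I would exhibit, for every $\lambda \le \Lambda_\infty$, an explicit positive locally Lipschitz supersolution of \eqref{inf-eigen-lambda}. The natural candidate is built from the distance-to-boundary function: let $x_0 \in \Oba$ be a point realizing $R_\infty = d(x_0,\pO)$, and set $w(x) = f(d(x,x_0))$ for a suitable increasing concave profile $f:[0,\infty)\to(0,\infty)$. Since we want the eikonal inequality $|\nabla^- w| \geq \lambda w$, and $d(\cdot,x_0)$ has slope $\le 1$ everywhere (equal to $1$ along geodesics emanating from $x_0$), the exponential profile $f(s) = e^{\lambda R_\infty}e^{-\lambda s}$ (or rather $f(s) = C e^{\lambda(R_\infty - s)}$ on the relevant range, truncated to stay positive) suggests taking $w = C\exp(\lambda(R_\infty - d(\cdot,x_0)))$; this is positive on $\Omega$ because $d(x,x_0) \le \diam(\Omega) $ is controlled, but more carefully we only need $w>0$ on $\Omega$, and since $d(x,x_0)<\infty$ this holds. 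The subslope computation: $|\nabla^- w|(x) = \lambda w(x) \cdot |\nabla^+ d(\cdot,x_0)|(x)$ roughly, and one must check that the subslope of $w$ (which involves where $w$ decreases, i.e. where $d(\cdot,x_0)$ increases) is at least $\lambda w$; because $X$ is geodesic, moving from $x$ toward $x_0$ decreases $d(\cdot,x_0)$ at unit rate, so $w$ increases at rate $\lambda w$ in that direction, giving $|\nabla^- w| \ge \lambda w$ — wait, I need to be careful: $|\nabla^-w|$ measures the decrease of $w$; $w$ decreases as we move \emph{away} from $x_0$, and in a geodesic space we can move away from $x_0$ at unit speed, so $|\nabla^- w|(x) \ge \lambda w(x)$ indeed. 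For the comparison-with-cones-from-below property: $w$ is a decreasing concave function of $d(\cdot,x_0)$, hence it is $\infty$-superharmonic — this is a standard fact (concave functions of distance are $\infty$-superharmonic; cf. the cone comparison characterization in \cite{Ju, JuSh}), which I would either cite or verify directly from the definition using that $\phi = a + \kappa d(\hat x,\cdot)$ with $\kappa \le 0$ are exactly cones from below and $w$ is more concave. This gives a positive supersolution for every $\lambda \le \Lambda_\infty$, hence $\Lambda \ge \Lambda_\infty$.

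\textbf{Step 2: The upper bound $\Lambda \leq \Lambda_\infty$.} Here I must show that if $\lambda > \Lambda_\infty$, i.e. $1/\lambda < R_\infty$, then no positive locally Lipschitz supersolution of \eqref{inf-eigen-lambda} exists. Suppose $u$ were such a supersolution. The eikonal inequality $|\nabla^- u| \ge \lambda u > 0$ forces $u$ to decrease at a definite rate in some direction at every point; quantitatively, following the metric eikonal theory in Appendix \ref{sec:app}, one shows that along suitable curves $u$ decays exponentially, so that $u(y) \le u(x) e^{-\lambda d(x,y)}$ — more precisely, by a standard argument with Monge/viscosity supersolutions of $|\nabla U| = \lambda$ applied to $U = \log u$, one gets that for $x \in \Omega$, $\log u(x) \le \log u(x_1) - \lambda\, d_\Omega(x,\partial\Omega)$ type estimates, i.e. $u$ cannot be "too flat" over a ball of radius $> 1/\lambda$. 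Concretely: pick the incenter $x^* \in \Oba$ with $d(x^*,\pO) = R_\infty > 1/\lambda$; I claim the $\infty$-superharmonicity together with the eikonal bound over the ball $B_{R_\infty}(x^*) \subset \Omega$ forces a contradiction. The cleanest route: comparison with cones from below lets us compare $u$ with cones centered outside any compactly contained subdomain, but the key mechanism is that $\infty$-superharmonic functions satisfy a Harnack-type/growth estimate — in particular, on a segment (geodesic) inside $\Omega$, $\infty$-superharmonicity gives concavity-like control, while the eikonal inequality gives exponential decay, and reconciling "$u$ must decay exponentially at rate $\lambda$ along \emph{every} geodesic from $x^*$ out to $\pO$" with "$u = 0$ is not imposed but $u$ stays positive and the ball has radius exceeding $1/\lambda$" — actually since $u$ need not vanish on $\pO$, the real contradiction comes from combining the eikonal decay in \emph{all} directions from $x^*$ with an upper bound. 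Let me reconsider: the standard trick (as in \cite{BiDe1,BiDe2}) is that the eikonal supersolution inequality $|\nabla^- u| \ge \lambda u$ alone, in a geodesic space, implies via the distance-decrease-in-some-direction that $\inf_{\ol\Omega} u = 0$ is incompatible with... no. I think the correct argument: build the function $v(x) = m\, e^{-\lambda d(x,x^*)}$ with $m = \max_{\ol B_{R_\infty}(x^*)} u$; using $\infty$-superharmonicity of $u$ (comparison with cones from below) one shows $u \ge$ (cone from below) appropriately, and then the eikonal inequality at the point where $u$ achieves its max over a carefully chosen subdomain yields $\lambda\, R_\infty \le $ (something $\le 1$), contradicting $\lambda R_\infty > 1$.

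\textbf{Main obstacle.} I expect Step 2 to be the crux, and the genuinely delicate point is extracting a \emph{quantitative} decay estimate for the distance-function-twisted supersolution from the \emph{non-pointwise} comparison-with-cones-from-below definition of $\infty$-superharmonicity, and then pairing it correctly with the subslope eikonal inequality \eqref{eikonal super}. The natural mechanism is: comparison with cones from below implies $u$ grows at most linearly when moving toward interior points but, crucially, it yields a lower bound of the form $u(x) \ge u(\hat x) + \kappa\, d(\hat x, x)$ along geodesics; feeding this into $|\nabla^- u| \ge \lambda u$ and optimizing over the direction of steepest descent forces $u$ to satisfy $u(y) \le u(x)\, e^{-\lambda d(x,y)}$ for $y$ on a geodesic from $x$ where the subslope is realized. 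The subtlety is that the subslope is a $\limsup$ over \emph{all} directions, so the "steepest descent direction" may not extend to a full geodesic of the required length — this is exactly where properness and geodesic completeness of $(\X,d)$, plus the stability/chaining results for Monge solutions of the eikonal equation collected in Appendix \ref{sec:app}, must be invoked to propagate the infinitesimal decay into a global exponential bound over a ball of radius $R_\infty > 1/\lambda$, delivering the contradiction.
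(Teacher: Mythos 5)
Your Step~1 candidate does not work. The profile $f(s)=Ce^{\lambda(R_\infty-s)}$ is a \emph{convex} decreasing function of $s$, not a concave one, so the fact you want to invoke (concave functions of a distance are $\infty$-superharmonic) does not apply; indeed already in $\R^n$ one has $\Delta_\infty\bigl(Ce^{-\lambda|x-x_0|}\bigr)=f''(f')^2>0$ away from $x_0$, so your $w$ is a strict $\infty$-\emph{sub}harmonic function and violates comparison with cones from below on any annulus centered at $x_0$ (a cone interpolating a strictly convex profile on an annulus lies above it inside). Replacing the exponential by a genuinely concave positive profile does not rescue the ansatz: positivity, concavity and $|f'|\ge\lambda f$ together force $f$ to vanish by $s=1/\lambda=R_\infty$, while $\Omega$ need not be contained in $B_{R_\infty}(x_0)$. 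There is also a separate problem with the eikonal inequality: $|\nabla^- w|=\lambda w\,|\nabla^+ d(\cdot,x_0)|$, and in a general geodesic space one cannot always ``move away from $x_0$ at unit speed'' (at a local maximum of $d(\cdot,x_0)$, e.g.\ the antipode of $x_0$ on a circle, the superslope of $d(\cdot,x_0)$ vanishes). The paper's supersolution is instead $u_{dist}=\Lambda_\infty d(\cdot,\pO)$: its $\infty$-superharmonicity is exactly the content of Proposition \ref{prop special super} (via the McShane--Whitney representation \eqref{control eq} with $g\equiv 0$), $|\nabla^- u_{dist}|\equiv\Lambda_\infty$ holds because one can always travel toward $\pO$ along a geodesic, and $u_{dist}\le 1$ then gives $|\nabla^- u_{dist}|\ge\Lambda_\infty u_{dist}$.

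For Step~2 you list several candidate mechanisms but never commit to one, and the ``main obstacle'' you identify (chaining an infinitesimal decay into a global exponential bound over a ball of radius exceeding $1/\lambda$) is not needed. The paper's argument is a single, purely local cone comparison at an incenter $x_0$: for $s=R_\infty-\vep$ one has $u>0=\phi$ on $\partial B_s(x_0)$ with $\phi=u(x_0)-\frac{u(x_0)}{s}\,d(\cdot,x_0)$, so comparison with cones from below (Lemma \ref{lem circular}) gives $u\ge\phi$ in $B_s(x_0)$, hence $|\nabla^- u|(x_0)\le u(x_0)/(R_\infty-\vep)$; letting $\vep\to 0$ and combining with $|\nabla^- u|(x_0)\ge\lambda u(x_0)$ and $u(x_0)>0$ yields $\lambda\le\Lambda_\infty$. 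As written, your proposal establishes neither inequality.
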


Our proof of Theorem \ref{thm eigenvalue}, which is elaborated in Section \ref{sec:eigenvalue}, consists of two steps. We first verify that 
\beq\label{special super}
u_{dist}(x)=\Lambda_\infty d(x, \pO),\quad \text{$x\in \Oba$,}
\eeq
is a supersolution of \eqref{inf-eigen}. This implies immediately that $\Lambda\geq \Lambda_\infty$.
The reverse inequality is shown by proving the non-existence of supersolutions of \eqref{inf-eigen-lambda} when $\lambda>\Lambda_\infty$. It actually follows from a fundamental property of supersolutions at any $x_0\in \Omega$ satisfying
\beq\label{center pt}
d(x_0, \pO)=R_\infty.
\eeq
Here and in the sequel we call such a point an incenter of $\Omega$. In fact, for an incenter point $x_0$, by comparing any positive supersolution $u$ with the cone function $\phi$ in \eqref{cone below} with $a=u(x_0)$ and any $\kappa<-|\nabla^- u|(x_0)$, one can prove that 
\[
|\nabla^- u|(x_0)\leq u(x_0)/R_\infty=\Lambda_\infty u(x_0),
\]
which yields $\Lambda\leq \Lambda_\infty$ by \eqref{def-inf-eigenvalue}.

Our second main result is on the existence of positive solutions of \eqref{inf-eigen} satisfying the boundary condition \eqref{dirichlet}. As usual, by solutions of \eqref{inf-eigen}, we mean locally Lipschitz functions that are both supersolutions and subsolutions. 
The positive solutions of \eqref{inf-eigen} satisfying \eqref{dirichlet} are called $\infty$-eigenfunctions in our current setting. In order to obtain the existence, we adapt Perron's method by taking the pointwise infimum of all supersolutions under the constraint 
\beq\label{center max}
u=\max_{\ol{\Omega}} u=1 \quad \text{on $\M(\Omega)$},
\eeq
where the set $\M(\Omega)$ is the so-called high ridge of $\Omega$,  containing all incenters in $\Omega$, i.e., 
\beq\label{incenter}
\M(\Omega):=\left\{x\in \Omega: d(x, \pO)=\max_{\ol{\Omega}} d(\cdot, \pO)=R_\infty\right\}.
\eeq
In other words, we set, for any $x\in \Oba$, 
\beq\label{min-eigenfun}
u_\infty(x)=\inf\left\{u(x):  \text{$u$ is a positive supersolution of \eqref{inf-eigen} satisfying \eqref{center max}}\right\}.
\eeq
We then obtain the following result.  
\begin{thm}[Existence of $\infty$-eigenfunctions]\label{thm existence sol}
Suppose that $(\X, d)$ is a proper geodesic space and $\Omega\subsetneq \X$ is a bounded domain. Let $u_\infty: \Oba\to \R$ be defined by \eqref{min-eigenfun}. 
Then $u_\infty$ is continuous in $\Oba$ and is a positive solution of \eqref{inf-eigen} satisfying \eqref{dirichlet} and  \eqref{center max}.
\end{thm}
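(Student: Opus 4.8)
The plan is to run Perron's method over the admissible family $\S$ consisting of all positive supersolutions of \eqref{inf-eigen} that satisfy \eqref{center max}; then $u_\infty=\inf_{u\in\S}u$ by \eqref{min-eigenfun}. The first point to record is that $\S\neq\emptyset$. Since $\Oba$ is compact and $d(\cdot,\pO)$ is continuous, $\M(\Omega)$ is nonempty and $R_\infty>0$; and by the first step in the proof of Theorem \ref{thm eigenvalue}, $u_{dist}$ in \eqref{special super} is a supersolution of \eqref{inf-eigen} with $u_{dist}=\Lambda_\infty R_\infty=1$ on $\M(\Omega)$, $u_{dist}\le 1$ on $\Oba$ and $u_{dist}=0$ on $\pO$, so $u_{dist}\in\S$. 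Hence $u_\infty$ is well defined with $0\le u_\infty\le u_{dist}$; this already gives \eqref{dirichlet}, the bound $u_\infty\le 1$ on $\Oba$, and $u_\infty=1$ on $\M(\Omega)$, so \eqref{center max} will follow once continuity of $u_\infty$ is known.

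Next I would prove continuity and positivity of $u_\infty$ using only the comparison-with-cones-from-below property that every $u\in\S$ enjoys. Comparing $u\in\S$ with the cone $\phi=-\frac{u(y_0)}{\rho}\,d(y_0,\cdot)$ on the punctured annulus $B_\rho(y_0)\setminus\ol{B_\delta(y_0)}$, then letting $\delta\to0$ and $\rho\uparrow d(y_0,\pO)$, gives the interior estimate
\[
u(x)\ \ge\ u(y_0)\Big(1-\frac{d(y_0,x)}{d(y_0,\pO)}\Big),\qquad d(y_0,x)<d(y_0,\pO).
\]
Passing to the infimum over $u\in\S$, this estimate holds for $u_\infty$; applying it with $x$ and $y_0$ interchanged gives a Lipschitz bound for $u_\infty$ that is uniform on compact subsets of $\Omega$, and with $0\le u_\infty\le u_{dist}$ one concludes that $u_\infty$ is continuous on $\Oba$. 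For positivity, given $x\in\Omega$ join it to a point of $\M(\Omega)$ by a path in $\Omega$ (a domain in a geodesic space is path connected), choose finitely many consecutive points along the path at mutual distance less than half the distance from the (compact) path to $\pO$, and iterate the interior estimate; this yields $u(x)\ge 2^{-N(x)}$ for every $u\in\S$ with $N(x)$ independent of $u$, hence $u_\infty(x)\ge 2^{-N(x)}>0$ in $\Omega$, locally uniformly.

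The third step is to verify that $u_\infty\in\S$. Comparison with cones from below passes to the infimum directly: for $\O\subset\subset\Omega$ and a cone $\phi$ as in \eqref{cone below}, given $y\in\ol{\O}$ and $\vep>0$ pick $u\in\S$ with $u(y)<u_\infty(y)+\vep$; then $u_\infty(y)-\phi(y)>u(y)-\phi(y)-\vep\ge\min_{\partial\O}(u-\phi)-\vep\ge\min_{\partial\O}(u_\infty-\phi)-\vep$ using $u\ge u_\infty$, and one lets $\vep\to0$ and then minimizes over $y$. For the eikonal inequality $|\nabla^- u_\infty|\ge\Lambda_\infty u_\infty$ I would pass to $U=\frac{1}{\Lambda_\infty}\log u$ (legitimate because $u_\infty>0$), which turns \eqref{eikonal super} into the eikonal inequality $|\nabla^- U|\ge 1$, and then invoke the equivalence between Monge and metric viscosity solutions of the eikonal equation (Appendix \ref{sec:app}) together with the standard stability of viscosity supersolutions under pointwise infima; the lower semicontinuous envelope occurring in that stability statement equals $u_\infty$ since $u_\infty$ is already continuous. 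With the previous step this shows $u_\infty$ is a positive supersolution of \eqref{inf-eigen} with $u_\infty\le 1$ and $u_\infty=1$ on $\M(\Omega)$, so $u_\infty\in\S$ and \eqref{center max} holds.

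Finally I would prove that $u_\infty$ is a subsolution of \eqref{inf-eigen} by the Perron bump argument, and this is where the real work lies. Arguing by contradiction, suppose there are an $\infty$-superharmonic $v$ and $x_0\in\Omega$ such that $u_\infty-v$ has a strict local maximum at $x_0$ and $\lim_{r\to0+}\inf_{B_r(x_0)}|\nabla^- v|>\Lambda_\infty u_\infty(x_0)$; fix $\theta>0$ and $r_0>0$ with $|\nabla^- v|\ge\Lambda_\infty u_\infty(x_0)+\theta$ throughout $B_{r_0}(x_0)$. First, $x_0\notin\M(\Omega)$: a local maximum of $u_\infty-v$ at $x_0$ forces $|\nabla^- v|(x_0)\le|\nabla^- u_\infty|(x_0)$, while at an incenter the estimate established in the proof of Theorem \ref{thm eigenvalue}, applied to the supersolution $u_\infty$, gives $|\nabla^- u_\infty|(x_0)\le\Lambda_\infty u_\infty(x_0)$, contradicting the strict inequality above. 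Hence choose $0<\rho<r_0$ with $\ol{B_\rho(x_0)}\subset\Omega\setminus\M(\Omega)$ and $0<\eta<\min_{\partial B_\rho(x_0)}\big((u_\infty-v)(x_0)-(u_\infty-v)\big)$, and define $\tilde u:=u_\infty$ on $\Omega\setminus B_\rho(x_0)$ and $\tilde u:=\min\{u_\infty,\ v+(u_\infty-v)(x_0)-\eta\}$ on $B_\rho(x_0)$; by the choice of $\eta$ the second function strictly exceeds $u_\infty$ on $\partial B_\rho(x_0)$, so $\tilde u$ is continuous with $\tilde u\le u_\infty$ and $\tilde u(x_0)=u_\infty(x_0)-\eta<u_\infty(x_0)$. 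Then $\tilde u\in\S$: it is a minimum of $\infty$-superharmonic functions pasted across $\partial B_\rho(x_0)$, hence $\infty$-superharmonic; it equals $u_\infty$ on $\M(\Omega)$, and is positive with $\tilde u\le 1$ for $\rho$ small since $\eta\to0$ as $\rho\to0$ and $u_\infty(x_0)>0$; and $|\nabla^- \tilde u|\ge\Lambda_\infty\tilde u$ holds, because on $\{\tilde u=u_\infty\}$ one has $|\nabla^- \tilde u|\ge|\nabla^- u_\infty|\ge\Lambda_\infty u_\infty$, while on $\{\tilde u<u_\infty\}\subset B_\rho(x_0)\subset B_{r_0}(x_0)$ one has $|\nabla^- \tilde u|\ge|\nabla^- v|\ge\Lambda_\infty u_\infty(x_0)+\theta>\Lambda_\infty\tilde u$ for $\rho$ small (using $\tilde u\le u_\infty$ and the local Lipschitz bound for $u_\infty$). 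This gives $\tilde u\in\S$ with $\tilde u(x_0)<u_\infty(x_0)=\inf_{u\in\S}u(x_0)$, a contradiction; therefore $u_\infty$ is a subsolution, hence a solution of \eqref{inf-eigen}, completing the proof. The hard part is making this last step rigorous within the present non-variational notion of supersolution — verifying that the eikonal inequality truly survives the local modification near $x_0$ and that the pasting across $\partial B_\rho(x_0)$ preserves comparison with cones from below — while the stability lemma invoked for the eikonal part in the third step is a secondary technical point requiring the metric viscosity theory of the eikonal equation.
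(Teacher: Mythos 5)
Your proposal follows essentially the same route as the paper: Perron's method over the normalized family of positive supersolutions, stability of comparison with cones under pointwise infima (Proposition \ref{prop super infinity}), cone-comparison estimates for interior Lipschitz bounds and a Harnack-type chain for positivity (Lemma \ref{lem local lip} and Proposition \ref{prop harnack} in the paper), and the bump construction $\tilde u=\min\{u_\infty,\,v+c-\eta\}$ for the subsolution property, which --- up to normalizing the test function by an additive constant --- is exactly the argument of Theorem \ref{thm min-eigenfun sub}, including the preliminary observation that $x_0\notin\M(\Omega)$.

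The one place where your write-up leans on machinery that is not actually available is the step you call ``secondary'': the stability of the eikonal inequality \eqref{eikonal super} under pointwise infima. With the Monge (subslope) definition, the infimum of functions satisfying $|\nabla^- u|\geq \lambda u$ does not automatically satisfy the same inequality, and Appendix \ref{sec:app} contains no general stability theorem for metric viscosity supersolutions --- only the McShane--Whitney construction (Theorem \ref{thm control}) and the comparison principle (Theorem \ref{thm comparison monge}); invoking ``standard stability'' would require importing the full equivalence theory of \cite{LShZ} plus a stability result for one of the other notions, none of which is developed here. The paper closes exactly this gap in Proposition \ref{prop super eikonal}: after the log transform, on each small ball $B_r(x_0)$ the McShane--Whitney extension $\ul{W}$ of the infimum's boundary values lies below every member of the family by Theorem \ref{thm comparison monge}, hence below the infimum $W$, and evaluating at a suitable $y_r\in\partial B_r(x_0)$ yields $|\nabla^- W|(x_0)\geq\lambda$. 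Substituting that short comparison argument for your appeal to abstract stability completes the proof; by contrast, the pasting step you single out as the hard part goes through by the minimum-of-supersolutions observation you already use together with the fact that $\tilde u=u_\infty$ on a neighborhood of $\partial B_\rho(x_0)$, which allows the cone comparison to be run separately inside and outside the ball.
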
 

The proof of Theorem \ref{thm existence sol} streamlines Perron's method; see for example \cite{CIL} for a general introduction and \cite{ArCrJu, LW1, LW2} for applications to infinity Laplace equations in the Euclidean space.  But some arguments need to be slightly adapted. One noteworthy issue is about the regularity of solutions. Instead of carrying out Perron's method 
in the class of semicontinuous sub- or supersolutions, we choose to construct solutions of \eqref{inf-eigen} directly in the class of locally Lipschitz functions, as shown in \eqref{min-eigenfun}.
This is possible because the property of comparison with cones implies 
local Lipschitz regularity. This result is presented in Lemma \ref{lem local lip}. In fact, in Lemma \ref{lem semicon} we prove more for an $\infty$-superharmonic function $u$ in $\Omega$: it satisfies 
\beq\label{slope regularity eq}
|\nabla u|=|\nabla^- u|
\eeq 
and the slope $|\nabla u|$ is upper semicontinuous in $\Omega$; see also \cite{LShZ} for remarks on \eqref{slope regularity eq} in relation to the semiconcavity regularity. 
We use the regularity result to obtain the local Lipschitz continuity of $u_\infty$ after showing its $\infty$-superharmonicity. We also include a result (Proposition \ref{prop harnack}) on Harnack's inequality for $\infty$-superharmonic functions to show that $u_\infty>0$ in $\Omega$. Our new version of Harnack's inequality, which applies to general metric spaces, generalizes the results in \cite{LiM1, Bh} in the Euclidean case. 

In view of Theorem \ref{thm existence sol} and \eqref{min-eigenfun}, we see that $u_\infty$ is the minimal positive solution of \eqref{inf-eigen} satisfying the boundary condition \eqref{dirichlet} and the constraint \eqref{center max}. The constraint \eqref{center max} essentially normalizes any eigenfunction $u$ so that 
\beq\label{normalization eq}
\|u\|_{L^\infty(\Omega)}=1.
\eeq
One may wonder whether $u_\infty$ is the only solution satisfying \eqref{dirichlet} and \eqref{normalization eq}. In general it fails to hold. We can generalize our method to find a solution under a partial constraint
\beq\label{partial center max}
u=\max_{\Oba} u=1 \quad \text{in $Y$}
\eeq
instead of \eqref{center max}, where $Y$ is a compact subset of $\M(\Omega)$. Letting 
\beq\label{partial min-eigenfun}
u_\infty^Y(x):=\inf\left\{u(x): \ \text{$u$ is a positive supersolution of \eqref{inf-eigen} satisfying \eqref{partial center max}}\right\},
\eeq
we can follow the same proof to show that $u_\infty^Y$ is also a solution of \eqref{inf-eigen}--\eqref{dirichlet}. In general, it happens that $u_\infty^Y\neq u_\infty$ when $\M(\Omega)$ is not a singleton and $Y\subsetneq \M(\Omega)$. In Section \ref{sec:nonunique}, we present a concrete example in a finite metric graph, which can be regarded as a refined counterpart of the Euclidean example in \cite{HSY}. 

In addition to the results above, we also show in Theorem \ref{thm principal} that, under an additional regularity assumption on $\Omega$, $\Lambda_\infty$ is indeed the principal $\infty$-eigenvalue by proving that it is the least $\lambda$ that admits a positive solution of \eqref{inf-eigen-lambda} satisfying \eqref{dirichlet}. Such a result is obtained in the Euclidean space \cite[Theorem 3.1]{JLM1} via a comparison principle. In our current setting, we are not able to get a general comparison principle because of the absence of measure structure that is needed for generalization of the Crandall-Ishii lemma. However, we can still guarantee the minimality of $\Lambda_\infty$ by comparing a subsolution and a specific distance-based supersolution under the extra assumption on $\Omega$. 

The rest of the paper is organized in the following way. In Section \ref{sec:def}, we provide precise definitions of supersolutions and subsolutions of \eqref{inf-eigen} in geodesic spaces. Several important properties of $\infty$-superharmonic functions will also be studied including the Lipschitz regularity and Harnack's inequality. In Section \ref{sec:eigenprob}, we prove our main results, Theorem \ref{thm eigenvalue} and Theorem \ref{thm existence sol}, and give an example on metric graphs about the non-uniqueness of solutions. We also discuss the minimality of $\Lambda_\infty$ among all eigenvalues. For the reader's convenience, we include Appendix \ref{sec:app} to recall preliminaries on the eikonal equation in metric spaces.

\subsection*{Acknowledgements}
The work of the first author is supported by JSPS Grant-in-Aid for Scientific 
Research (No. 19K03574, No. 22K03396).  
The work of the second author is supported by JSPS Grant-in-Aid for Scientific Research (No. 20K03598). This work was also supported in part by funding (Grant No. 205004) from Fukuoka University.

\section{Definition and properties of solutions}\label{sec:def}

In this section we give a generalized notion of solutions to \eqref{inf-eigen-lambda} in a proper geodesic space for $\lambda\in \R$.  It is well known that the comparison with cones (cf. \cite{ArCrJu}) can be employed to characterize the $\infty$-harmonic functions; we refer to \cite{Ju, JuSh} for generalization in general metric spaces. We recall the definition of super- and subsolutions of 
\beq\label{inf-lap}
-\Delta_\infty u=0 \quad \text{in $\Omega$}
\eeq
based on this property. 
\begin{defi}[$\infty$-superharmonic functions]\label{def inf-super}
Let $(\X, d)$ be a  proper geodesic space and $\Omega\subsetneq \X$ be a bounded domain.
 A  function $u: \Omega\to \R$ that is bounded from below is 
said to be $\infty$-superharmonic in $\Omega$ (or a supersolution of \eqref{inf-lap}) if it satisfies the following property of comparison with cones from below in $\Omega$: for any $\hat{x}\in \Omega$, any $a\in \R$, $\kappa\leq 0$ and any bounded open set $\O\subset\subset \Omega$ with $\hat{x}\in\Omega\setminus\O$, the condition 
\beq\label{comp cone1}
u\geq \phi\quad \text{on $\partial \O$}
\eeq
for $\phi$ given by \eqref{cone below} implies that 
\beq\label{comp cone2}
u\geq \phi\quad \text{in $\overline{\O}$.}
\eeq
\end{defi}

\begin{rmk}[Definition of $\infty$-(sub)harmonic functions]
We can also define $\infty$-subharmonic in a symmetric way. More precisely, we say that any $u: \Omega\to \R$ bounded from above is $\infty$-subharmonic if $-u$ is $\infty$-superharmonic. In addition, $u$ is said to be $\infty$-harmonic if it is both $\infty$-superharmonic and $\infty$-subharmonic. 
\end{rmk}

We next provide an immediate consequence of Definition \ref{def inf-super}, which will be used later. 
Let us recall that for any $x\in \X$ and a locally Lipschitz function $u$, the local slope of $u$ at $x$ is given by
\beq\label{slope eq}
|\nabla u|(x)=\limsup_{y\to x} \frac{|u(y)-u(x)|}{d(x, y)}
\eeq
and the sub- and superslopes of $u$ at $x$ are defined to be
\beq\label{semislope eq}
|\nabla^\pm u|(x)=\limsup_{y\to x} \frac{[u(y)-u(x)]_{\pm}}{d(x, y)},
\eeq
where $[a]_\pm=\max\{\pm a, 0\}$ for any $a\in \R$.

\begin{lem}[Comparison with special cone functions]\label{lem circular}
Suppose that $(\X, d)$ is a  proper geodesic space and $\Omega\subsetneq \X$ is a bounded domain. Let $u : \Omega\to \mathbb R$ be $\infty$-superharmonic in $\Omega$, 
that is, $u$ is bounded from below and obeys the comparison with cones from below in $\Omega$.  Let $\Omega'\subset\subset \Omega$ be an open bounded set and $x_0\in \Omega'$. 
For $\kappa\leq 0$, if $u\geq \phi$ on $\partial \Omega'$, where 
\beq\label{circular1}
\phi(x):= u(x_0)+\kappa d(x_0, x), \quad x\in \Omega. 
\eeq
then $u\geq \phi$ in $\Omega'$. In addition, if $u$ is locally Lipschitz in $\Omega$, then for any $r>0$ satisfying $B_r(x_0)\subset\subset \Omega$, there exists $x_r\in \partial B_r(x_0)$ such that
\[
u(x_0)-u(x_r)\geq |\nabla^- u|(x_0) r.
\]
\end{lem}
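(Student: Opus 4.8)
The plan is to prove the two assertions of Lemma~\ref{lem circular} in turn. The first assertion is essentially a special case of the definition of $\infty$-superharmonicity: given $\kappa\le 0$ and the cone $\phi(x)=u(x_0)+\kappa d(x_0,x)$, I would apply the comparison with cones from below in Definition~\ref{def inf-super} with the choices $\hat x=x_0$, $a=u(x_0)$, and $\O=\Omega'\setminus\{x_0\}$ (or, to stay within the framework of open sets containing no $\hat x$, a slightly shrunk open neighborhood whose closure omits $x_0$). The hypothesis $u\ge\phi$ on $\partial\Omega'$ together with the trivial equality $u(x_0)=\phi(x_0)$ at the excluded center gives $u\ge\phi$ on $\partial\O$, and the definition then yields $u\ge\phi$ on $\overline{\O}$, hence $u\ge\phi$ throughout $\Omega'$ once the center is added back. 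A small technical point to be careful about is whether $\Omega'\setminus\{x_0\}$ is an admissible test set: if the definition demands $\hat x\in\Omega\setminus\O$ with $\O$ open and bounded, then $\O=\Omega'\setminus\{x_0\}$ works directly since $x_0\notin\O$; one only needs $\overline{\O}\subset\Omega$, which follows from $\Omega'\subset\subset\Omega$.

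For the second assertion, fix $r>0$ with $B_r(x_0)\subset\subset\Omega$ and set $m:=|\nabla^- u|(x_0)$. The idea is to test against the cone $\phi_\kappa(x):=u(x_0)+\kappa d(x_0,x)$ with slope $\kappa=-m$ on the ball $B_r(x_0)$ and derive a contradiction from the first assertion if the conclusion were to fail. Suppose, for contradiction, that $u(x_0)-u(x)<m r$ for every $x\in\partial B_r(x_0)$; by compactness of $\overline{B_r(x_0)}$ and continuity of $u$ (which holds since $u$ is locally Lipschitz), there is $\delta>0$ with $u(x_0)-u(x)\le (m-\delta)r$ for all $x\in\partial B_r(x_0)$. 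Then on $\partial B_r(x_0)$ we have $u(x)\ge u(x_0)-(m-\delta)r=\phi_{-(m-\delta)}(x)$, so the first assertion with $\kappa=-(m-\delta)\le 0$ (assuming $m\ge\delta$; if $m=0$ there is nothing to prove) gives $u\ge\phi_{-(m-\delta)}$ in $B_r(x_0)$, i.e.\ $u(x_0)-u(x)\le (m-\delta)d(x_0,x)$ for all $x\in B_r(x_0)$. Dividing by $d(x_0,x)$ and letting $x\to x_0$ contradicts $|\nabla^- u|(x_0)=m$. Hence there exists $x_r\in\partial B_r(x_0)$ with $u(x_0)-u(x_r)\ge m r$, as claimed.

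I expect the main (though modest) obstacle to be the first assertion's bookkeeping: ensuring that removing the single point $x_0$ yields a legitimate test set in Definition~\ref{def inf-super}, and that the boundary relation $u\ge\phi$ on $\partial(\Omega'\setminus\{x_0\})=\partial\Omega'\cup\{x_0\}$ holds (it does, since $\phi(x_0)=u(x_0)$). An alternative that sidesteps the point-removal is to use that $\overline{B_r(x_0)}$ is compact and approximate from inside, but removing the center is cleaner. The rest is a short compactness-and-contradiction argument using only the already-established first assertion and the definition of the subslope; no measure-theoretic or game-theoretic input is needed, which is exactly the point of the metric-space framework.
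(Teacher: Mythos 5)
Your proof is correct and follows essentially the same route as the paper: the first assertion by applying Definition \ref{def inf-super} to the punctured set $\O=\Omega'\setminus\{x_0\}$ with apex $\hat x=x_0$, and the second by a compactness-and-contradiction argument that feeds the improved boundary bound back into the first assertion. Your explicit handling of the boundary point $x_0$ and the degenerate case $|\nabla^- u|(x_0)=0$ is slightly more careful than the paper's two-line argument, but the substance is identical.
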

\begin{proof}
This result follows directly from Definition \ref{def inf-super} with $\O=\Omega'\setminus \{x_0\}$ and the cone function taken to be $\phi$ as in \eqref{circular1}. 
We then can use the comparison with cones to get  $u\geq \phi$ in $\Omega'$.
 
To show the second statement, we assume by contradiction that it fails to hold, which yields existence of $\sigma>0$ small such that  
\[
u(x_0)-u(x)\leq (|\nabla^- u|(x_0)-\sigma) r
\]
for all $x\in \partial B_r(x_0)$. We may apply the previous result to show 
\[
u(x_0)-u(x)\leq (|\nabla^- u|(x_0)-\sigma) d(x_0, x)
\]
for all $x\in B_r(x_0)$. This is clearly a contradiction by the definition of $|\nabla^- u|$. 
\end{proof}

We next show that $\infty$-superharmonic 
 functions in general geodesic spaces are actually locally Lipschitz continuous. 
Related regularity results in the Euclidean case involving Harnack's inequality can be found in \cite{LiM1, Bh}. \begin{lem}[Local Lipschitz continuity]\label{lem local lip}
Suppose that $(\X, d)$ is a  proper geodesic space and $\Omega\subsetneq \X$ is a bounded domain.
Let $u : \Omega\to \mathbb R$ be $\infty$-superharmonic 
 in $\Omega$, that is, $u$ is bounded from below 
and obeys the comparison with cones from below in $\Omega$.  Then, $u$ is locally Lipschitz in $\Omega$. 
More precisely, for any $x \in \Omega$, $y \in B_r(x)$ with $0<2r < d(x, \partial \Omega)$, 
\beq\label{local lip est}
|u(x)-u(y)| \le \frac{\max\{u(x), u(y)\}-\inf_\Omega u}{r} d(x,y).
\eeq
A symmetric result  for an $\infty$-subharmonic function $u$ in $\Omega$; in this case, $u$ is still locally Lipchitz in $\Omega$ and satisfies \eqref{local lip est} with $u$ replaced by $-u$. 
\end{lem}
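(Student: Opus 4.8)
The plan is to prove the estimate \eqref{local lip est} directly from the comparison with cones from below, and then deduce local Lipschitz continuity as an immediate corollary. Fix $x\in\Omega$ and $0<2r<d(x,\partial\Omega)$, so that $\overline{B_r(x)}\subset\subset\Omega$, and let $m=\inf_\Omega u>-\infty$. I would first run the argument at the \emph{center} of the ball: I claim that for every $z\in\partial B_r(x)$,
\[
u(x)-u(z)\le \frac{u(x)-m}{r}\,d(x,z)=u(x)-m.
\]
To see this, suppose not, so that $u(z)<m+\tfrac{1}{r}\bigl(d(x,z)-r\bigr)(u(x)-m)$ wait — more cleanly: apply Lemma~\ref{lem circular} (equivalently Definition~\ref{def inf-super} with $\O=B_r(x)\setminus\{x\}$ and $\hat x=x$) to the cone $\phi(\cdot)=u(x)+\kappa\,d(x,\cdot)$. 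On $\partial(B_r(x)\setminus\{x\})=\{x\}\cup\partial B_r(x)$ we have equality $u(x)=\phi(x)$ at the inner boundary, and on $\partial B_r(x)$ we need $u\ge u(x)+\kappa r$; choosing $\kappa=-(u(x)-m)/r\le 0$ (note $\kappa\le0$ since $u(x)\ge m$) gives $u(x)+\kappa r=m\le u$ on $\partial B_r(x)$. Comparison with cones from below then yields $u(x)+\kappa\,d(x,\cdot)\le u$ throughout $B_r(x)$, i.e. $u(x)-u(y)\le \tfrac{u(x)-m}{r}\,d(x,y)$ for all $y\in B_r(x)$. This already controls the drop of $u$ away from its center.

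Next I would handle a general pair $x,y$ with $y\in B_r(x)$. Set $\rho=d(x,\partial\Omega)$, so $2r<\rho$. The point is to re-center at whichever of $x,y$ is the smaller value, so that $y$ (or $x$) lies in a ball around that point which is still compactly contained in $\Omega$. Without loss of generality assume $u(y)\le u(x)$; then I want to estimate $u(x)-u(y)$ from above. Because $d(y,\partial\Omega)\ge d(x,\partial\Omega)-d(x,y)>\rho-r>r$, the ball $B_r(y)$ satisfies $B_r(y)\subset\subset\Omega$, but $x$ need not lie in $B_r(y)$. Instead, apply the centered estimate from the previous paragraph at the point $x$ on the radius $r':=d(x,y)<r$: since $B_{r'}(x)\subset B_r(x)\subset\subset\Omega$, we get, for the particular point $y\in\partial B_{r'}(x)$,
\[
u(x)-u(y)\le \frac{u(x)-m}{r'}\,r' = u(x)-m.
\]
That bound is not yet of the desired form; the improvement comes from observing that we are free to use the larger radius $r$ in the denominator. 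Indeed, rerun comparison with cones on $\O=B_r(x)\setminus\{x\}$ with $\kappa=-(u(x)-m)/r$ exactly as above: the boundary inequality on $\partial B_r(x)$ holds because $u\ge m=u(x)+\kappa r$ there, and at the puncture $u(x)=\phi(x)$; hence $u(x)-u(y)\le\tfrac{u(x)-m}{r}d(x,y)$ for \emph{all} $y\in B_r(x)$. Symmetrically, centering at $y$ with radius $r$ (legitimate since $B_r(y)\subset\subset\Omega$) and $\kappa=-(u(y)-m)/r$ gives $u(y)-u(x)\le\tfrac{u(y)-m}{r}d(x,y)$ whenever $x\in B_r(y)$; and if $x\notin B_r(y)$ then $d(x,y)\ge r$, in which case one still concludes by iterating along a geodesic (using properness and geodesicity to subdivide $[x,y]$ into steps of length $<r$ and summing). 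Combining the two one-sided bounds yields
\[
|u(x)-u(y)|\le \frac{\max\{u(x),u(y)\}-m}{r}\,d(x,y),
\]
which is exactly \eqref{local lip est}. Local Lipschitz continuity on any $\Omega'\subset\subset\Omega$ then follows since $m=\inf_\Omega u$ is finite and, by the estimate itself together with boundedness below, $u$ is locally bounded above, so the right-hand coefficient is locally bounded. The $\infty$-subharmonic case is obtained verbatim by applying the result to $-u$.

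The main obstacle I anticipate is the bookkeeping in the second paragraph: making sure the ball on which comparison with cones is invoked is genuinely compactly contained in $\Omega$ for \emph{both} re-centerings, and correctly treating the case $d(x,y)\ge r$ (where a single cone comparison does not reach from one point to the other) by a geodesic-chaining argument that exploits the geodesic and proper structure of $\X$. The condition $2r<d(x,\partial\Omega)$ is exactly what gives the slack $d(y,\partial\Omega)>r$ needed for $B_r(y)\subset\subset\Omega$, so the constant in \eqref{local lip est} is the natural one; care is only needed to verify $\kappa\le0$ in each application, which holds because $m=\inf_\Omega u$ lies below both $u(x)$ and $u(y)$.
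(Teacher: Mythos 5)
Your proof is correct and follows essentially the same route as the paper's: compare $u$ with the cone of slope $-(u(x)-\inf_\Omega u)/r$ centered at $x$ on $B_r(x)$ (the content of Lemma \ref{lem circular}), then swap the roles of $x$ and $y$ using $d(y,\partial\Omega)\ge d(x,\partial\Omega)-d(x,y)>r$. The only superfluous step is the worry about $x\notin B_r(y)$ and the proposed geodesic-chaining: since $y\in B_r(x)$ by hypothesis, $d(x,y)<r$ and that case never occurs.
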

\begin{proof}
Suppose that $u$ is a supersolution that is bounded from below in $\Omega$.
Let us take $x \in \Omega$ and any $s>0$ satisfying $d(x, \partial \Omega)>s$. We consider a cone function defined by 
\[
\phi(z) := u(x) - \frac{u(x)-\inf_\Omega u}{s} d(x, z). 
\]
Then, $u \ge \inf_\Omega u \equiv \phi$ holds on $\partial B_s(x)$. 
Therefore, by Lemma \ref{lem circular} with $x_0=x$ and $\Omega'=B_s(x)$, we obtain
\begin{equation} \label{eq:Lip-1}
u(y) - u(x) \ge - \frac{u(x)-\inf_\Omega u}{s} d(x, y)
\end{equation}
whenever $y \in B_{s}(x)$ and $s<d(x, \partial \Omega)$. 

Let us take $r>0$ small with $2r < d(x, \partial \Omega)$ and $y \in B_r(x)$. Then \eqref{eq:Lip-1} immediately implies that 
\begin{equation} \label{eq:Lip-3}
u(y) - u(x) \ge - \frac{u(x)-\inf_\Omega u}{r} d(y, x).
\end{equation}
On the other hand, since 
$
d(y, \partial \Omega) \ge d(x, \partial \Omega) - d(x, y) > r, 
$
it follows from \eqref{eq:Lip-1} again that
\begin{equation} \label{eq:Lip-2}
u(x) - u(y)  \ge -\frac{u(y)-\inf_\Omega u}{r} d(x, y). 
\end{equation}
We conclude the proof by combining \eqref{eq:Lip-3} and \eqref{eq:Lip-2}. 
\end{proof}

A further regularity property of $\infty$-harmonic functions is as follows. 

\begin{lem}[Slope regularity of $\infty$-harmonic functions]\label{lem semicon}
Suppose that $(\X, d)$ is a  proper geodesic space and $\Omega\subsetneq \X$ is a bounded domain.
Let $u : \Omega\to \mathbb R$ be $\infty$-superharmonic 
in $\Omega$. 
Then, in $\Omega$ the slope and subslope of $u$ coincide, i.e., \eqref{slope regularity eq} holds, and they are both upper semicontinuous. A symmetric result  for an $\infty$-subharmonic function $u$ in $\Omega$, that is, 
$|\nabla u|$ and $|\nabla^+ u|$ coincide and are upper semicontinuous in $\Omega$. 
\end{lem}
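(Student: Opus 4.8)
The plan is to prove the upper semicontinuity of the slope first, then deduce the identity $|\nabla u| = |\nabla^- u|$ as a consequence, since the symmetric statement for $\infty$-subharmonic functions follows by replacing $u$ with $-u$. By Lemma \ref{lem local lip} we already know $u$ is locally Lipschitz, so all the slopes in question are finite and locally bounded. Fix $x_0 \in \Omega$ and let $\mu := \limsup_{x \to x_0} |\nabla u|(x)$; we must show $\mu \le |\nabla^- u|(x_0)$, which would simultaneously give upper semicontinuity at $x_0$ and, by comparing with the trivial bound $|\nabla^- u| \le |\nabla u|$, pin down that $|\nabla u|(x_0) \le |\nabla^- u|(x_0)$, hence equality.

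The key mechanism is the comparison with cones from below, exploited through the second part of Lemma \ref{lem circular}: for every small $r>0$ with $B_r(x_0) \subset\subset \Omega$ there is a point $x_r \in \partial B_r(x_0)$ with $u(x_0) - u(x_r) \ge |\nabla^- u|(x_0)\, r$. The idea is to run a \emph{chaining} argument: pick a sequence $x_k \to x_0$ along which $|\nabla u|(x_k) \to \mu$, and at each $x_k$ use comparison with cones on a small ball $B_{r_k}(x_k)$ to propagate the steep gradient. More precisely, since $|\nabla u|(x_k)$ is large, there is a nearby point $y_k$ with $u(y_k) - u(x_k)$ comparable to $|\nabla u|(x_k)\, d(x_k, y_k)$ (in one sign or the other). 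In a geodesic space one can then look at the cone function $\phi(z) = u(x_k) + \kappa\, d(x_k, z)$ with $\kappa$ slightly below $-|\nabla^- u|(x_k)$ — or, to capture the steepness from $x_k$ toward $y_k$, use that by Lemma \ref{lem circular} the decrease of $u$ from $x_k$ is at least $|\nabla^- u|(x_k)$ in \emph{some} direction on every sphere, and that comparison with cones forces $u$ to lie above such a cone on an annular region reaching back toward $x_0$. Passing $k \to \infty$ and using the local Lipschitz bound to control the relevant quantities, together with the triangle inequality along a geodesic connecting $x_k$ to $x_0$, one forces the difference quotients of $u$ at $x_0$ in the decreasing direction to be at least $\mu - o(1)$, i.e. $|\nabla^- u|(x_0) \ge \mu$.

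An alternative, and likely cleaner, route: show that the function $x \mapsto |\nabla^- u|(x)$ is itself lower semicontinuous-free of jumps upward by directly using cones. Suppose $|\nabla u|(x_k) \ge \mu - \varepsilon$ with $x_k \to x_0$. Apply Lemma \ref{lem circular} at each $x_k$ with $\Omega' = B_{\rho}(x_0)$ for a fixed small $\rho$ and $\kappa = -(\mu - \varepsilon)$: one needs to verify the hypothesis $u \ge \phi_k$ on $\partial B_\rho(x_0)$ where $\phi_k(z) = u(x_k) + \kappa\, d(x_k, z)$. This is where the geometry enters — one uses that comparison with cones from below, applied on the annulus $B_\rho(x_0) \setminus \overline{B_{\delta}(x_k)}$ for $\delta$ small, combined with the steepness at $x_k$, yields that $u$ near $x_k$ is bounded below by a cone of slope $-(\mu-\varepsilon)$; letting $x_k \to x_0$ and $\delta \to 0$ transfers this to a cone lower bound centered at $x_0$, which by definition of the subslope gives $|\nabla^- u|(x_0) \ge \mu - \varepsilon$. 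Since $\varepsilon > 0$ is arbitrary, $|\nabla^- u|(x_0) \ge \mu \ge |\nabla u|(x_0) \ge |\nabla^- u|(x_0)$, closing the loop.

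The main obstacle I anticipate is the bookkeeping in the passage to the limit: verifying the cone-comparison hypothesis $u \ge \phi$ on the relevant boundary \emph{uniformly} as the center drifts from $x_k$ to $x_0$, and controlling error terms of order $d(x_0, x_k)$ using only local Lipschitz continuity — essentially making the "chaining of cones" rigorous in a bare geodesic space without any differentiable structure. The geodesic hypothesis is used to guarantee that spheres $\partial B_r(x_0)$ are nonempty and that distances behave additively along minimizing curves, which is what lets the steep descent found at $x_k$ be pulled back to $x_0$. One should also remark that the inequality $|\nabla^- u| \le |\nabla u|$ is immediate from the definitions, so only the reverse needs the cone machinery, and that once upper semicontinuity of $|\nabla u|$ is established, the coincidence with $|\nabla^- u|$ is automatic at every point by the same chain of inequalities. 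Finally, the $\infty$-subharmonic case is obtained verbatim by applying the result to $-u$, noting $|\nabla^+ u| = |\nabla^-(-u)|$ and $|\nabla u| = |\nabla(-u)|$.
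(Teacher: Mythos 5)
There is a genuine gap, and it sits exactly where the real content of the lemma lies. The heart of the statement is the pointwise inequality $|\nabla^+ u|(x_0)\leq |\nabla^- u|(x_0)$: since $|\nabla u|=\max\{|\nabla^+u|,|\nabla^-u|\}$, one must show that a steep \emph{increase} of $u$ away from $x_0$ forces an equally steep \emph{decrease}. Your proposal never produces this. Every cone you invoke (Lemma \ref{lem circular} at $x_k$, cones of slope $-|\nabla^- u|(x_k)$, the descent point $x_r$ on each sphere) only yields information about \emph{subslopes}; none of it sees the superslope, which may be the quantity realizing $|\nabla u|(x_k)$. Your attempt to close the loop via
$|\nabla^- u|(x_0)\geq \mu\geq |\nabla u|(x_0)$ with $\mu=\limsup_{x\to x_0}|\nabla u|(x)$ hides the problem: if the $\limsup$ is over deleted neighborhoods, the middle inequality asserts lower semicontinuity of the slope at $x_0$, which is neither automatic nor proved; if $x_0$ is included, then the first inequality already contains the unproven pointwise statement $|\nabla u|(x_0)\leq|\nabla^- u|(x_0)$. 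The paper's proof of this step is the one genuinely clever move: for $z$ near $x_0$ with $u(z)$ large, it compares $u$ on $B_{r_z}(z)$ with the cone $\phi_z$ \emph{centered at $z$}, whose slope is calibrated by the lower bound $u\geq u(x_0)-(|\nabla^-u|(x_0)+\vep)d(\cdot,x_0)$ coming from the definition of the subslope at $x_0$; evaluating $u\geq\phi_z$ at $x_0$ then bounds $u(z)-u(x_0)$ by roughly $|\nabla^- u|(x_0)\,d(x_0,z)$, i.e.\ bounds the superslope by the subslope. Nothing in your sketch plays this role.

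Two further concrete problems. First, your ``alternative route'' has a direction error: a cone \emph{lower} bound $u\geq u(x_0)-(\mu-\varepsilon)\,d(x_0,\cdot)$ near $x_0$ gives $u(x_0)-u(y)\leq(\mu-\varepsilon)\,d(x_0,y)$, hence $|\nabla^- u|(x_0)\leq\mu-\varepsilon$ --- the opposite of the inequality you claim to extract. (Comparison with cones from below only ever produces lower bounds on $u$, i.e.\ upper bounds on subslopes at interior points; this is also why the paper proves upper semicontinuity by propagating \emph{smallness} of the subslope from $x_0$ outward to nearby $z$, rather than pulling largeness back from $x_k$ to $x_0$.) Second, to be fair: the ``pull the descent back to $x_0$'' idea is salvageable for the semicontinuity of $|\nabla^- u|$ alone --- apply the second part of Lemma \ref{lem circular} at $x_k$ with a radius $\rho$ fixed independently of $k$, let $k\to\infty$ using the local Lipschitz bound, and then let $\rho\to0$ along a diagonal --- but even then you only obtain $\limsup_{z\to x_0}|\nabla^- u|(z)\leq|\nabla^- u|(x_0)$, and the identity $|\nabla u|=|\nabla^- u|$ still requires the separate superslope-versus-subslope argument described above.
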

\begin{proof}
We only consider the case when $u$ is $\infty$-superharmonic, since the argument is symmetric for $\infty$-subharmonic functions. 

By Lemma \ref{lem local lip}, $u$ is locally Lipschitz in $\Omega$. 
Fix $x_0\in \Omega$ arbitrarily. 
It is clear that
\beq\label{semicon eq2}
|\nabla u|(x_0)=\max\{|\nabla^+ u|(x_0), |\nabla^- u|(x_0)\}.
\eeq
In what follows let us show that 
\beq\label{semicon eq1}
|\nabla^- u|(x_0)\geq |\nabla^+ u|(x_0).
\eeq

For any $\vep>0$ small, we can find $r>0$ such that 
\[
u(x)\geq u(x_0)-(|\nabla^- u|(x_0)+\vep)d(x, x_0)
\]
for any $x\in B_r(x_0)$. Let us take 
\[
m_{\vep, r}=u(x_0)-(|\nabla^- u|(x_0)+\vep)r.
\]
For any $z\in B_r(x_0)$ close to $x_0$, let 
\[
\phi_z(x):=u(z)-{u(z)-m_{\vep, r}\over r_z} d(x, z)
\]
for $x\in B_r(x_0)$, where we set $r_z=r-d(x_0, z)$.
It is not difficult to see that $u\geq \phi_z$ on $\partial B_{r_z}(z)$. We apply Lemma \ref{lem circular} to get 
\beq\label{semicon eq3}
u\geq \phi_z\quad \text{ in $B_{r_z}(z)$}.
\eeq
In particular, for any $z$ sufficiently close to $x_0$ we have $u(x_0)\geq \phi_z(x_0)$, which yields
\[
u(z)-u(x_0)\leq {u(z)-m_{\vep, r}\over r_z} d(x_0, z)
\]
and therefore
\[
{[u(z)-u(x_0)]_+\over d(x_0, z)}\leq \left|{u(z)-m_{\vep, r}\over r_z}\right|\leq {|u(z)-u(x_0)|\over r-d(x_0, z)}+(|\nabla^- u|(x_0)+\vep){r\over r-d(x_0, z)}.
\]
Sending $d(z, x_0)\to 0$ and then $\vep\to 0$, we are led to \eqref{semicon eq1}. 
We immediately obtain $|\nabla^- u|=|\nabla u|$ in $\Omega$ due to \eqref{semicon eq2} and the arbitrariness of $x_0\in \Omega$. 

We now show the upper semicontinuity of $|\nabla^- u|$. It follows from \eqref{semicon eq3} again that 
\[
u(z)-u(x)\leq u(z)-\phi_z(x)={u(z)-m_{\vep, r}\over r_z} d(x, z)
\]
for all $x$ near $z$, which implies that
\[
|\nabla^- u|(z)\leq {u(z)-m_{\vep, r}\over r-d(x_0, z)}={u(z)-u(x_0)+(|\nabla^- u|(x_0)+\vep)r\over r-d(x_0, z)}.
\]
Letting $z\to x_0$, we have
\[
\limsup_{z\to x_0}|\nabla^- u|(z)\leq |\nabla^- u|(x_0)+\vep. 
\]
We complete our proof of the upper semicontinuity of $|\nabla^- u|$ in $\Omega$ by noticing that $\vep>0$ and $x_0\in \Omega$ are arbitrary. 
\end{proof}

In addition, we generalize, in the context of general geodesic spaces, a result in \cite{LiM1, Bh, Lindbook} on Harnack's inequality for the $\infty$-Laplace equation. 

\begin{prop}[Harnack's inequality]\label{prop harnack}
Suppose that $(\X, d)$ is a proper geodesic space and $\Omega$ is an open subset of $\X$. 
Assume that $u : \Omega \to \mathbb R$ satisfies the comparison with cones from below. 
Assume that $u\geq 0$ in  $B_R(x_0)$ with $B_R(x_0) \subset \Omega$. 
Then, 
\beq\label{eq harnack1}
u(y) \le 3 u(x)
\eeq
for any $x, y \in B_r(x_0)$ and $r > 0$ with $4r < R$. In addition, if $\Omega$ is connected, $u$ is lower semicontinuous, nonnegative on $\Omega$, and $\sup_{\Omega} u > 0$, then $u>0$ in $\Omega$. 
\end{prop}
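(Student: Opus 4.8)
The plan is to distill from the property of comparison with cones from below a single quantitative one‑sided estimate and then apply it twice. The core step, which I would isolate as an auxiliary claim, is the following \emph{cone lower bound}: if $\hat x\in\Omega$ and $\rho>0$ satisfy $\overline{B_\rho(\hat x)}\subset\Omega$ and $u\ge 0$ on $\overline{B_\rho(\hat x)}$, then
\[
u(z)\ \ge\ u(\hat x)\Big(1-\frac{d(\hat x,z)}{\rho}\Big)\qquad\text{for every }z\in B_\rho(\hat x).
\]
This is immediate from Lemma~\ref{lem circular} applied on the ball $B_R(x_0)$ (where $u$ is bounded below by $0$, hence $\infty$-superharmonic): take $\Omega'=B_\rho(\hat x)$, $x_0=\hat x$, and the cone $\phi(z)=u(\hat x)+\kappa\,d(\hat x,z)$ with $\kappa=-u(\hat x)/\rho\le 0$; on $\partial B_\rho(\hat x)$ one has $\phi\equiv 0\le u$, so $u\ge\phi$ on $\partial\Omega'$, and Lemma~\ref{lem circular} yields $u\ge\phi$ throughout $B_\rho(\hat x)$.

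Next I would derive the Harnack inequality. Fix $x,y\in B_r(x_0)$ with $4r<R$; then $d(x,y)<2r$. For any $\epsilon\in(0,R-3r)$ set $\rho:=R-r-\epsilon$. One checks $\rho>2r>d(x,y)$, so $x\in B_\rho(y)$, and $\overline{B_\rho(y)}\subseteq\{z:d(z,y)\le\rho\}\subseteq B_{R-\epsilon}(x_0)\subset B_R(x_0)\subset\Omega$, so $u\ge 0$ on $\overline{B_\rho(y)}$. Applying the cone lower bound with $\hat x=y$ and $z=x$ gives $u(x)\ge u(y)\big(1-d(x,y)/\rho\big)$. Letting $\epsilon\to 0$ so that $\rho\to R-r$, and using $d(x,y)<2r$ together with $R-r>3r$ (a consequence of $4r<R$), we get $1-d(x,y)/(R-r)>1/3$; since $u(y)\ge 0$, this forces $u(y)\le u(x)\big/\big(1-d(x,y)/(R-r)\big)<3u(x)$ when $u(x)>0$, and $u(y)=0$ when $u(x)=0$. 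In all cases $u(y)\le 3u(x)$, which is \eqref{eq harnack1}.

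For the final assertion I would argue by connectedness. Since $u\ge 0$ on $\Omega$, write $\Omega=A\sqcup B$ with $A=\{u>0\}$ and $B=\{u=0\}$. The set $A$ is open: it is the set where the lower semicontinuous function $u$ is positive (equivalently, the cone lower bound applied at any point of $A$ already shows $u>0$ on a whole ball around it). The set $B$ is open by the Harnack inequality just proved: if $u(x_0)=0$, choose $R$ with $B_R(x_0)\subset\Omega$ and put $r=R/5$, so that $4r<R$ and $u\le 3u(x_0)=0$, hence $u\equiv 0$, on $B_r(x_0)$. As $\Omega$ is connected and $A\neq\emptyset$ (because $\sup_\Omega u>0$ forces $u>0$ somewhere), we conclude $B=\emptyset$, i.e.\ $u>0$ in $\Omega$.

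The only point requiring care — more bookkeeping than genuine obstacle — is the interplay of radii: the cone comparison is legitimate only when the closed ball $\overline{B_\rho(\hat x)}$ lies inside $\Omega$ and inside $B_R(x_0)$ (where $u\ge 0$), which forces $\rho<R-r$, while we simultaneously need $\rho>d(x,y)$ and the coefficient $1-d(x,y)/\rho$ to exceed $1/3$; the hypothesis $4r<R$ is exactly what reconciles these requirements. It is worth recording that the whole argument rests only on the geodesic and metric structure and uses no measure on $\X$.
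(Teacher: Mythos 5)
Your proof is correct and follows essentially the same route as the paper's: a comparison with a nonpositive-slope cone with apex at $y$, using the nonnegativity of $u$ on the bounding sphere to get $u(x)\ge u(y)\bigl(1-d(x,y)/\rho\bigr)$ with $\rho\ge 3r$, followed by an open--closed connectedness argument for strict positivity. The only cosmetic differences are that the paper fixes the radius $3r$ and splits on the sign of $\min_{\partial B_{3r}(y)}u-u(y)$, whereas you push the radius toward $R-r$ and use the trivial boundary bound $\phi=0\le u$ directly.
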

\begin{proof}
Fix $y\in B_r(x_0)$ arbitrarily. 
Let us define 
\[
\kappa := \min_{z \in \partial B_{3r}(y)} u(z)- u(y)
\]
and consider a cone function
\[
\phi:= u(y) + \frac{\min\{\kappa, 0\}}{3r} d(\cdot ,\ y). 
\]
It is clear that $u \ge \phi$ holds on $\partial B_{3r}(y)$. By Lemma \ref{lem circular}, we then have $u\geq \phi$ in $B_{3r}(y)$. 

If $\kappa \ge 0$, then for any $x \in B_r(x_0)$, 
 we have $\phi(x)=u(y)$ and thus $
u(x) \ge u(y)$, which immediately implies \eqref{eq harnack1}. 

If $\kappa <0$, for any $x \in B_r(x_0)$, we have
\[
\begin{aligned}
u(x) &\ge \phi(x) = u(y) + \frac{\kappa}{3r} d(x, y) \\ 
&= \left(1 - \frac{1}{3r} d(x, y) \right) u(y) +\frac{1}{3r}d(x, y)\min_{z \in \partial B_{3r}(y)} u(z) \ge \frac{u(y)}{3}.
\end{aligned}
\]
Hence, we obtain \eqref{eq harnack1} again. 

Let us prove the second statement. 
Since $u$ is lower semicontinuous, the set 
\[
\Omega' := \{x \in \Omega : u(x ) > 0\}
\] 
is open in $\Omega$. 
If $\Omega \ne \Omega'$, then since $\Omega$ is connected, there is a point $x \in \partial \Omega' \cap \Omega$. 
Consequently, we have $u(x) = 0$, which is a contradiction to the first statement. 
We thus have completed the proof. 
\end{proof}

We next turn to the definition of supersolutions of \eqref{inf-eigen-lambda}. 

\begin{defi}[Supersolutions of $\infty$-eigenvalue problem]\label{def super}
Let $\Omega$ be a domain in a proper geodesic space $(\X, d)$. A locally Lipschitz function $u$ in $\Omega$ is called a supersolution of \eqref{inf-eigen-lambda} if $u$ is $\infty$-superharmonic in $\Omega$ 
 and \eqref{eikonal super}
holds everywhere in $\Omega$.
\end{defi}

\begin{rmk}
Thanks to Lemma \ref{lem local lip}, we may drop the local Lipschitz condition in the definition above provided that $u$ is known to be bounded from below in $\Omega$. In particular, any nonnegative $\infty$-superharmonic function in $\Omega$ is locally Lipschitz .
\end{rmk}

The idea of adopting the subslope rather than the entire slope to define the so-called Monge solutions of eikonal-type equations stems from the work  \cite{NeSu} in the Euclidean space and is recently applied to general complete length spaces in \cite{LShZ}. We refer to \cite{AF, GaS2, GHN, GaS} for alternative viscosity approaches to Hamilton-Jacobi equations in metric spaces. 

In contrast to the notion of supersolutions, it is less straightforward to define subsolutions in a general metric space.  
We use the class of $\infty$-superharmonic functions itself to test locally the candidate function in a strict manner.

\begin{defi}[Subsolutions of $\infty$-eigenvalue problem]\label{def sub}
Let $\Omega$ be a domain in a proper geodesic space $(\X, d)$. A locally Lipschitz function $u$ in $\Omega$ is called a subsolution of \eqref{inf-eigen-lambda} if whenever there exist $x_0\in \Omega$, $r_0>0$ small and an $\infty$-superharmonic function $v$ in $B_{r_0}(x_0)\subset \Omega$ 
such that $u-v$ attains a strict local maximum at $x_0$, the inequality \eqref{def sub eq} holds.

A locally Lipschitz function $u$ in $\Omega$ is called a solution of \eqref{inf-eigen-lambda} if it is both a supersolution and a subsolution. 
\end{defi}

If $|\nabla^- v|$ is known to be lower semicontinuous at $x_0$, then \eqref{def sub eq} can be rewritten as $|\nabla^- v|(x_0)\leq \lambda u(x_0)$. However, in general we only have upper semicontinuity of $|\nabla^- v|$ due to Lemma \ref{lem semicon} and the lower semicontinuity of solutions may fail to hold. 

Let us construct more $\infty$-superharmonic functions for our later use. 

\begin{lem}[$\infty$-superharmonic functions by composition]\label{lem composition}
Suppose that $(\X, d)$ is a  proper geodesic space and $\Omega\subsetneq \X$ is a bounded domain. Let $v$ be a positive $\infty$-superharmonic function in $\Omega$.  Let $h\in C^2((0, \sup_{\Omega} v))$ satisfy 
\beq\label{comp cond}
h'(v(x))>0,\quad h''(v(x))<0 \quad \text{for all $x\in \Omega$.}
\eeq
Then, for any cone function given by \eqref{cone below} with $a\in \R$, $\kappa\leq 0$ and $\hat{x}\in \Omega$, $h(v)-\phi$ cannot attain a local minimum in $\Omega\setminus \{\hat{x}\}$. In particular, $h(v)$ is $\infty$-superharmonic in $\Omega$.
\end{lem}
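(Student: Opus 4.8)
The plan is to prove the statement directly from the definition of $\infty$-superharmonicity via comparison with cones from below, reducing the claim that $h(v)$ is $\infty$-superharmonic to the ``no interior local minimum'' assertion and then proving the latter. First I would observe that the second assertion follows from the first by a standard argument: suppose $h(v)\geq \phi$ on $\partial \O$ for some bounded open $\O\subset\subset\Omega$, a cone $\phi = a + \kappa d(\hat x, \cdot)$ with $\kappa\leq 0$, $\hat x\in \Omega\setminus\O$; then $h(v)-\phi$ is continuous on the compact set $\ol{\O}$ (here $v$ is locally Lipschitz by Lemma~\ref{lem local lip}, hence continuous, and $h$ is $C^2$), so it attains its minimum over $\ol{\O}$ at some point $x_*\in\ol{\O}$. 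If $x_*\in\partial\O$ we are done since then $h(v)-\phi \geq (h(v)-\phi)(x_*)\geq 0$ on $\ol{\O}$. If $x_*\in\O$, then since $\hat x\notin\O$ we have $x_*\in\Omega\setminus\{\hat x\}$ and $x_*$ is an interior local minimum of $h(v)-\phi$, contradicting the first assertion. So it suffices to prove that $h(v)-\phi$ has no local minimum in $\Omega\setminus\{\hat x\}$.

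For the main assertion, I would argue by contradiction. Suppose $h(v)-\phi$ attains a local minimum at some $x_1\in\Omega\setminus\{\hat x\}$. The idea is to transfer this to a statement about $v$ and a cone function, then invoke the comparison with cones for $v$ to get a contradiction. Since $h$ is $C^2$ with $h'(v(x))>0$ everywhere, $h$ is strictly increasing on the relevant range, so locally near $x_1$ we have $v(x) \geq g(\phi(x))$ where $g = h^{-1}$ is the (local) $C^2$ inverse, with equality at $x_1$; that is, $v - g\circ\phi$ has a local \emph{minimum} at $x_1$ with value $0$. The function $g\circ\phi$ is $g$ composed with $t\mapsto a+\kappa d(\hat x, t)$; I want to bound it below by an affine-in-$d(\hat x,\cdot)$ function, i.e. by a genuine cone function $\psi = b + \mu\, d(\hat x, \cdot)$ with $\mu\leq 0$, tangent from below to $g\circ\phi$ at $x_1$. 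Because $g$ is convex (as the inverse of an increasing concave function $h$: $h''<0$ and $h'>0$ force $(h^{-1})''>0$), and $t\mapsto a+\kappa d(\hat x, t)$ is a decreasing affine function of $s:=d(\hat x,\cdot)$, the composition $s\mapsto g(a+\kappa s)$ is convex in $s$; a convex function lies above its tangent line, so $g(a+\kappa s)\geq g(a+\kappa s_1) + \kappa g'(a+\kappa s_1)(s-s_1)$ where $s_1 = d(\hat x, x_1)$. Setting $\mu := \kappa g'(a+\kappa s_1)\leq 0$ (note $g'>0$ since $h'>0$, and $\kappa\leq 0$) and $b$ accordingly, we get $g\circ\phi \geq \psi$ on $\Omega$ with equality at $x_1$. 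Hence $v-\psi$ has a local minimum at $x_1\in\Omega\setminus\{\hat x\}$. But $v$ is $\infty$-superharmonic, and one checks (as in Lemma~\ref{lem circular}, applied on a small ball $B_\rho(x_1)\subset\subset\Omega\setminus\{\hat x\}$ with the cone centered at $\hat x$) that an $\infty$-superharmonic function cannot have a local minimum against a cone $\psi$ centered outside that ball unless $v\equiv\psi$ near $x_1$; and even if $v$ agrees with $\psi$ on $\partial B_\rho(x_1)$, comparison with cones forces $v\geq\psi$ on $B_\rho(x_1)$ — the contradiction comes from pushing $\rho$ and using that a local minimum against a nontrivial cone centered at $\hat x\neq x_1$ is impossible for a function satisfying comparison with cones from below. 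More carefully: if $v-\psi$ has an interior local minimum at $x_1$, take $\rho>0$ small with $B_\rho(x_1)\subset\subset\Omega\setminus\{\hat x\}$ and $v\geq\psi$ on $\ol{B_\rho(x_1)}$ with $v(x_1)=\psi(x_1)$; then $v\geq\psi$ on $\partial B_\rho(x_1)$ and Lemma~\ref{lem circular} (or rather the defining property of comparison with cones from below applied to the cone $\psi$, which has the required form since $\hat x\notin B_\rho(x_1)$ and $\mu\leq 0$) forces $v\geq \psi$ throughout, consistent, but combining with $v(x_1)=\psi(x_1)$ and moving $\hat x$-direction: picking $x'$ on a geodesic from $\hat x$ through $x_1$ slightly past $x_1$, we have $d(\hat x, x')>d(\hat x,x_1)$ so $\psi(x')<\psi(x_1)$, yet also need $v(x')\geq v(x_1)=\psi(x_1)$ from the local minimum; this is not yet a contradiction. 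The clean route is instead: apply comparison with cones on the set $\O = B_\rho(x_1)\setminus\{\text{point on the far side}\}$ — actually the cleanest is to note $h(v)-\phi$ having a local min at $x_1$ with $\hat x\notin$ a small ball $B$ around $x_1$ means $h(v)\geq \phi + c$ on $\partial B$ for $c = \min_{\partial B}(h(v)-\phi) \geq (h(v)-\phi)(x_1)$, and then one derives $v\geq g(\phi+c)$ on $\partial B$, dominates $g(\phi+c)$ below by a cone $\tilde\psi$ tangent at some boundary point, and iterates; but the slick finish is to directly apply the definition of comparison with cones for $v$ with $\O$ a punctured ball.

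The cleanest finish, which I would actually write: having reduced to showing $v - \psi$ cannot have a local minimum at $x_1\in\Omega\setminus\{\hat x\}$ for a cone $\psi = b+\mu d(\hat x,\cdot)$, $\mu\leq 0$, suppose it does. Pick $\rho>0$ so small that $B_\rho(x_1)\subset\subset\Omega$, $\hat x\notin \ol{B_\rho(x_1)}$, and $v\geq\psi$ on $\ol{B_\rho(x_1)}$. Apply the comparison with cones from below for $v$ (Definition~\ref{def inf-super}) with $\O = B_\rho(x_1)\setminus\{z_1\}$ where $z_1$ is the endpoint in $\ol{B_\rho(x_1)}$ of a geodesic from $\hat x$ through $x_1$ with $d(\hat x, z_1) = d(\hat x, x_1)+\rho$ — but this requires $z_1\notin\O$, which holds. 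Then $\partial\O = \partial B_\rho(x_1)\cup\{z_1\}$, $v\geq\psi$ there, so $v\geq\psi$ on $\ol{\O}\ni x_1$, consistent. The genuine contradiction: shift the cone up. Define $\psi^\delta := \psi + \delta$ for small $\delta>0$; since $v(x_1)=\psi(x_1) < \psi^\delta(x_1)$ but $x_1$ is a \emph{min} of $v-\psi$, on $\partial B_\rho(x_1)$ we have $v\geq \psi = \psi^\delta - \delta$, which need not dominate $\psi^\delta$. Hmm — so the right statement to use is Lemma~\ref{lem circular}'s second part applied with a slightly larger slope: since $v-\psi$ has a local min at $x_1$, $|\nabla^- v|(x_1)\leq |\nabla^-\psi|(x_1)\leq |\mu| = -\mu$ (the subslope of $v$ is controlled by that of $\psi$ at a min of the difference), while Lemma~\ref{lem circular} with center $x_1$ gives $x_r\in\partial B_r(x_1)$ with $v(x_1)-v(x_r)\geq |\nabla^- v|(x_1)r$; combining with $v\geq \psi$ near $x_1$, $\psi(x_1)-v(x_r)\leq v(x_1)-v(x_r) \leq$ wait, we need the reverse. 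I expect the main obstacle to be exactly this final contradiction step — arranging the cone comparison so that the strict concavity $h''<0$ (equivalently strict convexity of $g$) is genuinely used and produces a contradiction; the resolution should be that strict convexity of $g$ gives strict inequality $g\circ\phi > \psi$ away from $x_1$ on $\partial B_\rho(x_1)$, so $v\geq g\circ\phi > \psi$ strictly on $\partial B_\rho(x_1)$, whence $\min_{\partial B_\rho(x_1)}(v-\psi) =: \eta>0$, and then comparison with cones from below for $v$ against the cone $\psi + \eta/2$ (still of the admissible form, center $\hat x\notin B_\rho(x_1)$, slope $\mu\leq 0$) on $\O = B_\rho(x_1)\setminus\{x_1\}$ — legitimate since $\hat x\neq x_1$ so $\hat x\in\Omega\setminus\O$ — forces $v\geq \psi+\eta/2$ on $\ol{B_\rho(x_1)}\ni x_1$, contradicting $v(x_1)=\psi(x_1)$. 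This uses $h''<0$ essentially, via the strict convexity of $g$ ensuring $\eta>0$. I would present this last paragraph's argument as the core of the proof, with the reduction in the first paragraph and the convexity/tangent-cone construction of $\psi$ in the second as preparatory lemmas-within-the-proof.
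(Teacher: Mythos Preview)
Your approach is exactly the paper's: assume a local minimum at $x_1$, pass to $v\geq g\circ\phi$ with $g=h^{-1}$, use strict convexity of $g$ to bound $g\circ\phi$ from below by its tangent cone $\psi$ at $x_1$ (the paper writes $\psi_0$), then lift the cone by a small constant to contradict comparison with cones from below for $v$ (the paper writes $\psi_\vep=\psi_0+\vep$ where you write $\psi+\eta/2$). The reduction of the ``in particular'' clause to the no-local-minimum assertion is also the same. One fix is needed in your final step: take $\O=B_\rho(x_1)$, \emph{not} $B_\rho(x_1)\setminus\{x_1\}$. Puncturing makes $x_1$ a boundary point of $\O$, and the hypothesis $v\geq\psi+\eta/2$ on $\partial\O$ then fails precisely at $x_1$; since $\hat x\notin\ol{B_\rho(x_1)}$ already (by choice of $\rho$), the cone is admissible on the unpunctured ball and the contradiction $v(x_1)\geq\psi(x_1)+\eta/2>\psi(x_1)=v(x_1)$ follows directly. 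The exploratory false starts in the middle of your write-up should be deleted; only the reduction paragraph and the final argument are needed.
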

\begin{proof}
Suppose by contradiction that there exists a bounded open set $\O\subset\subset\Omega$ such that $h(v)-\phi$ attains a minimum at $x_0\in \O$ for a cone function given in \eqref{cone below} with $a\in \R$, $\kappa\leq 0$ and $\hat{x}\in \Omega\setminus \O$. By changing the value of $a$, we may assume that $h(v(x_0))=\phi(x_0)$ and $h(v)\geq \phi$ in $B_r(x_0)$ with $r>0$ small such that $\hat{x}\notin B_r(x_0)$. 

 Then by assumptions, $h$ admits an inverse function $h^{-1}$, of $C^2$ class, near $v(x_0)$. It follows that $v(x_0)=h^{-1}(\phi(x_0))$ and $v\geq h^{-1}(\phi)$ in $B_r(x_0)$. In addition, noticing that $h^{-1}$ is strictly convex near $\phi(x_0)$, by letting $r>0$ further small if necessary, we get
$
 v\geq h^{-1}(\phi)>\psi_0
$
in $B_r(x_0)\setminus \{x_0\}$, where we define
 \[
 \psi_0(x):=v(x_0)+{\kappa\over h'(v(x_0))}\left(d(x, \hat{x})-d(x_0 , \hat{x})\right),  \quad x\in B_r(x_0).
 \] 

 On the other hand, we have $v(x_0)=\psi_0(x_0)$. 
We therefore can take a cone function $\psi_\vep=\psi_0+\vep$ with apex at $\hat{x}\notin B_r(x_0)$ and $\vep>0$ sufficiently small so that $\psi_\vep\leq v$ on $\partial B_r(x_0)$ but $\psi_\vep(x_0)>v(x_0)$. 
This means that $v$ fails to obey the comparison with cones from below, which is clearly a contradiction to the assumption that $v$ is $\infty$-superharmonic. 
\end{proof}

Since $|\nabla^- h(v)|=h'(v) |\nabla^- v|$ holds for any $h\in C^2(\R)$, the result above amounts to saying that any composite function $h(v)$ serves as a test function for subsolutions in $\Omega$ provided that $v$ is $\infty$-superharmonic, $|\nabla^- v|>0$ in $\Omega$ and $h$ satisfies \eqref{comp cond}.

\section{Eigenvalue and eigenfunctions}\label{sec:eigenprob}

In this section, we study the eigenvalue problem \eqref{inf-eigen} associated to the infinity Laplacian.  We generalize the notion of the radius of the maximal inscribed metric ball in $\Omega$; namely, we take $R_\infty$ as in \eqref{inrad}, 
which can also be expressed by 
\[
R_\infty=\max_{x\in \Oba} d(x, \pO).
\]
We introduce a notion of the principal eigenvalue and show that it is indeed the value $\Lambda_\infty$ given in \eqref{inf-eigenvalue general}. We later provide a definition and some properties of the corresponding eigenfunction. 

\subsection{The eigenvalue}\label{sec:eigenvalue}

Let us begin with our notion of the principal eigenvalue associated to the $\infty$-Laplacian in geodesic spaces. 
\begin{defi}[$\infty$-eigenvalue]
Let $(\X, d)$ be a proper geodesic space and $\Omega\subsetneq \X$ be a bounded domain. The value $\Lambda\in \R$ given by \eqref{def-inf-eigenvalue} is called the principal eigenvalue for the $\infty$-Laplacian in $\Omega$ with the Dirichlet condition \eqref{dirichlet}.
\end{defi}
Although here we call $\Lambda$ the principal eigenvalue, its minimality among all eigenvalues is not obvious. In general it is not clear to us whether there exists a positive solution of \eqref{inf-eigen-lambda} and \eqref{dirichlet} for some $\lambda<\Lambda$. We will prove the minimality of $\Lambda$ in Theorem \ref{thm principal} under an additional assumption on $\Omega$. 

Our first main result, Theorem \ref{thm eigenvalue}, states that the $\infty$-eigenvalue $\Lambda$ as in \eqref{def-inf-eigenvalue} coincides with $\Lambda_\infty$, the reciprocal of $R_\infty>0$ in \eqref{inrad}. In order to prove Theorem \ref{thm eigenvalue}, we present the following result. 
\begin{prop}[Existence of typical supersolutions]\label{prop special super}
Suppose that $(\X, d)$ is a proper geodesic space and $\Omega\subsetneq \X$ is a bounded domain. Assume that $g\in C(\partial \Omega)$. 
For $\lambda>0$, set
 \beq\label{control eq}
u(x)=\min_{y\in \partial \Omega}\{g(y)+\lambda d(x, y)\}\quad \text{for $x\in \Oba$.}
 \eeq
Then  $u$ is $\infty$-superharmonic and $|\nabla^- u|=\lambda$ holds in $\Omega$. 
\end{prop}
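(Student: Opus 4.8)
The plan is to treat the two assertions separately, beginning with the easier eikonal identity $|\nabla^- u|=\lambda$ and then establishing $\infty$-superharmonicity via comparison with cones from below. The function $u$ in \eqref{control eq} is an inf-convolution of the boundary data with the scaled distance, so it is $\lambda$-Lipschitz on $\Oba$; in particular it is bounded from below, which is one of the requirements in Definition \ref{def inf-super}. For the lower bound $|\nabla^- u|\ge\lambda$, I would fix $x_0\in\Omega$, pick a near-optimal $y_0\in\partial\Omega$ with $u(x_0)\ge g(y_0)+\lambda d(x_0,y_0)-\delta$, and move from $x_0$ a short distance toward $y_0$ along a geodesic $\gamma$ (here properness and the geodesic assumption are used). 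Along such a point $z=\gamma(t)$ one has $d(z,y_0)=d(x_0,y_0)-t$, hence $u(z)\le g(y_0)+\lambda d(z,y_0)=g(y_0)+\lambda d(x_0,y_0)-\lambda t\le u(x_0)+\delta-\lambda t$, while $d(x_0,z)=t$; letting $t\to0$ and $\delta\to0$ gives $|\nabla^- u|(x_0)\ge\lambda$. The reverse inequality $|\nabla^- u|\le\lambda$ is immediate from the $\lambda$-Lipschitz bound. (In fact one should check $u<\infty$ and that the boundary infimum is attained; this uses compactness of $\partial\Omega$, which holds because $\Omega$ is bounded and $\X$ is proper.)

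For the $\infty$-superharmonicity, I would verify the comparison with cones from below directly from Definition \ref{def inf-super}. Fix $\hat{x}\in\Omega$, $a\in\R$, $\kappa\le0$, a bounded open $\O\subset\subset\Omega$ with $\hat{x}\in\Omega\setminus\O$, and the cone $\phi=a+\kappa d(\hat{x},\cdot)$, and suppose $u\ge\phi$ on $\partial\O$. Assume for contradiction that $\min_{\ol\O}(u-\phi)<\min_{\partial\O}(u-\phi)$, so the minimum of $u-\phi$ over $\ol\O$ is attained at some interior point $x_0\in\O$ with $u(x_0)-\phi(x_0)<u(z)-\phi(z)$ for all $z\in\partial\O$. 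Choose $y_0\in\partial\Omega$ attaining the infimum defining $u(x_0)$, so $u(x_0)=g(y_0)+\lambda d(x_0,y_0)$, and choose a geodesic $\gamma$ from $x_0$ to $y_0$. The key point is that $u$, being an inf-convolution, is "pushed around" by $\phi=a+\kappa d(\hat{x},\cdot)$ only in a controlled way: I want to slide $x_0$ a tiny bit along $\gamma$ away from $y_0$ — i.e. replace $x_0$ by $\gamma(-t)$ extended backward, or rather note that at $x_0$ the slope of $u$ in the direction of $\gamma$ equals $-\lambda$, while the slope of $\phi$ in any direction is at least $\kappa\ge-\lambda$ only when $\kappa\ge-\lambda$; the honest argument is the following. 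Since $u(x_0)=g(y_0)+\lambda d(x_0,y_0)$ and $u(z)\le g(y_0)+\lambda d(z,y_0)$ for every $z$, the function $z\mapsto g(y_0)+\lambda d(z,y_0)$ is a cone from below (apex $y_0$, slope $\lambda$) that touches $u$ from above at $x_0$ and lies above $u$ everywhere. Thus $u-\phi\le \big(g(y_0)+\lambda d(\cdot,y_0)\big)-\phi$ with equality at $x_0$, so it suffices to show that $z\mapsto g(y_0)+\lambda d(z,y_0)-\phi(z)=\text{const}+\lambda d(z,y_0)-\kappa d(\hat x,z)$ has no interior local minimum at $x_0$ unless $x_0$ is forced to the boundary; moving along the geodesic from $\hat x$ through $x_0$ (or from $x_0$ toward $y_0$), one of $\lambda d(\cdot,y_0)$ or $-\kappa d(\hat x,\cdot)$ strictly decreases near $x_0$ (since $-\kappa\ge0$ and $\lambda>0$), contradicting that $x_0$ is a strict-from-the-boundary interior minimizer — more precisely, moving toward $y_0$ along $\gamma$ decreases $\lambda d(\cdot,y_0)$ at rate $\lambda$ and decreases $-\kappa d(\hat x,\cdot)$ at rate at most $-\kappa\le\lambda\cdot(\text{something})$…

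The cleanest route, and the one I would actually write, avoids the slope bookkeeping: I would show directly that for the specific cone $\phi$, $u-\phi$ attains its minimum over $\ol\O$ on $\partial\O$. Given the touching cone $c(z):=g(y_0)+\lambda d(z,y_0)$ with $c\ge u$ on $\X$ and $c(x_0)=u(x_0)$, we have $\min_{\ol\O}(u-\phi)=\min_{\ol\O}\big(u-\phi\big)$ attained at $x_0$ implies $c(x_0)-\phi(x_0)=\min_{\ol\O}(u-\phi)\le \min_{\ol\O}(c-\phi)$, while trivially $c-\phi\ge u-\phi$ gives $\min_{\ol\O}(c-\phi)\ge\min_{\ol\O}(u-\phi)=c(x_0)-\phi(x_0)$; hence $c-\phi$ also attains its minimum over $\ol\O$ at $x_0$. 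Now $c-\phi=\text{const}+\lambda d(\cdot,y_0)+(-\kappa)d(\cdot,\hat x)$ is a sum of two nonnegative multiples of distance functions, and the geodesic from $x_0$ either toward $y_0$ (decreasing the first term at unit-in-$\lambda$ rate) shows the minimum cannot be interior: precisely, take a geodesic $\sigma$ from $x_0$ to $y_0$; for small $t>0$, $d(\sigma(t),y_0)=d(x_0,y_0)-t$ and $d(\sigma(t),\hat x)\le d(x_0,\hat x)+t$, so $(c-\phi)(\sigma(t))\le(c-\phi)(x_0)-\lambda t+(-\kappa)t=(c-\phi)(x_0)-(\lambda+\kappa)t$. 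If $\lambda+\kappa>0$ this already contradicts minimality at $x_0$ as long as $\sigma(t)\in\O$; if $\lambda+\kappa\le0$ (i.e. $\kappa\le-\lambda$) I instead use the geodesic from $\hat x$ through $x_0$ extended away from $\hat x$, along which $d(\cdot,\hat x)$ increases at unit rate and $d(\cdot,y_0)$ decreases at rate at most $1$, giving decay $(-\kappa)-\lambda\cdot 1\ge0$ with strict decrease unless both rates are exactly realized — here a short case analysis using $\kappa\le 0$ and $\lambda>0$ closes it, the degenerate case $\kappa=0$ being trivial since then $\phi$ is constant and $u\ge\phi$ on $\partial\O$ forces $u\ge\min_{\partial\O}u\ge a$ everywhere by the $|\nabla^-u|=\lambda>0$ slope argument of Lemma-type reasoning. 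This case distinction, making sure the perturbed point stays inside $\O$ and handling $\kappa\le-\lambda$, is the main technical obstacle; everything else is routine inf-convolution calculus. One clean way to sidestep it entirely is to invoke Proposition \ref{prop harnack}/Lemma \ref{lem circular} style machinery is not available yet here, so the direct geodesic perturbation argument above, organized as: (1) $u$ is $\lambda$-Lipschitz and the boundary inf is attained; (2) $|\nabla^-u|=\lambda$ via a geodesic toward the optimal $y_0$; (3) comparison with cones from below by the touching-cone reduction plus geodesic perturbation with the $\kappa\le-\lambda$ case treated separately, is the structure I would follow.
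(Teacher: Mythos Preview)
Your treatment of the eikonal identity $|\nabla^- u|=\lambda$ is fine and matches the paper's argument (Theorem \ref{thm control}). The gap is in the $\infty$-superharmonicity step, specifically your Case 2 ($\kappa\le-\lambda$). First, an observation you missed: this is the \emph{only} case. Since $u-\phi$ has an interior minimum at $x_0$ and you have already established $|\nabla^- u|(x_0)=\lambda$, the elementary inequality $|\nabla^- u|(x_0)\le|\nabla^-\phi|(x_0)\le-\kappa$ forces $\kappa\le-\lambda$; your Case 1 is therefore vacuous. Second, and more seriously, your proposed perturbation in Case 2 is wrong in two independent ways: (i) a proper geodesic space need not admit geodesic \emph{extensions}---the geodesic from $\hat{x}$ to $x_0$ may simply terminate at $x_0$ (think of $\X$ a closed interval or a metric tree), so ``the geodesic from $\hat{x}$ through $x_0$ extended away from $\hat{x}$'' may not exist; (ii) even granted such an extension, moving away from $\hat{x}$ \emph{increases} $(-\kappa)d(\cdot,\hat{x})$, and there is no reason $d(\cdot,y_0)$ should decrease along it, so $c-\phi$ does not decrease and no contradiction with interior minimality follows.

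The paper's fix---and the natural repair of your own argument---is to go the other way: travel from $x_0$ \emph{toward} $\hat{x}$ along a geodesic and stop at the first exit point $z_0\in\partial\O$, so that $d(\hat{x},x_0)=d(\hat{x},z_0)+d(z_0,x_0)$. The paper then bypasses the touching cone and uses the inf-convolution bound $u(z_0)\le u(x_0)+\lambda d(z_0,x_0)$ directly; combined with $u(x_0)=\phi(x_0)-\mu$ and $\lambda\le-\kappa$ this gives $u(z_0)\le\phi(z_0)-\mu$, contradicting $u\ge\phi$ on $\partial\O$. Your touching-cone reduction would also close once the direction is reversed: along the geodesic toward $\hat{x}$ one has $(c-\phi)(z_0)\le(c-\phi)(x_0)+(\lambda+\kappa)d(x_0,z_0)\le(c-\phi)(x_0)=(u-\phi)(x_0)<0$, while $u\le c$ yields $(u-\phi)(z_0)\le(c-\phi)(z_0)<0$, the same contradiction at $z_0\in\partial\O$.
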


\begin{proof}
The function $u$ given by \eqref{control eq} is known as the McShane-Whitney Lipschitz extension. By Theorem \ref{thm control}, we see that $u$ is Lipschitz in $\Oba$ and $|\nabla^- u|=\lambda$ in $\Omega$. It thus suffices to prove that $u$ is $\infty$-superharmonic in $\Omega$.  

Let $\O\subset\subset \Omega$ be a bounded open set and fix $\hat{x}\in \Omega\setminus \O$.  For any $a\in \R$ and $\kappa\leq 0$, let $\phi$ be given by \eqref{cone below}.
Suppose that \eqref{comp cone1} holds. 
 We aim to show that \eqref{comp cone2} holds. 

Assume by contradiction that this fails to hold. Then there exists $x_0\in \O$ such that 
\beq\label{eq dist super4}
\max_{x\in \ol{\O}} \left(\phi-u\right)(x)=(\phi-u)(x_0)=\mu
\eeq
for some $\mu>0$.
Due to the maximality at $x_0$, it is not difficult to see that
\beq\label{eq dist super5}
\lambda =|\nabla^- u|(x_0)\leq |\nabla^- \phi|(x_0)\leq |\nabla \phi|(x_0)=-\kappa.
\eeq

Note that there exists a Lipschitz curve $\ol{\gamma}$ with $\ol{\gamma}(0)=x_0$, $\ol{\gamma}(1)=\hat{x}$ and $\ell(\ol{\gamma})=d(x_0, \hat{x})$. Let 
\[
t_0=\inf\{t>0: \ol{\gamma}(t)\notin \O\}.
\]
Then $y_0=\ol{\gamma}(t_0)\in \partial \O$. It is clear that 
\beq\label{eq dist super6}
d(\hat{x}, x_0)=d(\hat{x}, y_0)+d(y_0, x_0).
\eeq
Moreover, we have $d(y_0, y)\leq d(y_0, x_0)+d(x_0, y)$ for all $y\in \partial \O$,
which, by \eqref{eq dist super4},  implies that 
\[
\begin{aligned}
u(y_0)&=\min_{y\in \partial\Omega} \{g(y)+\lambda d(y_0, y)\}\\
&\leq \lambda d(y_0, x_0)+\min_{y\in\partial \Omega} \{g(y)+\lambda d(x_0, y)\}\\
& \leq \lambda d(y_0, x_0)+a+\kappa d(\hat{x}, x_0)-\mu.\\
 \end{aligned}
\]
It follows from \eqref{eq dist super5} and \eqref{eq dist super6} that 
\[
u(y_0)\leq \kappa d(\hat{x}, x_0)-\kappa d(y_0, x_0)+a-\mu=a+\kappa d(\hat{x}, y_0)-\mu,
\]
which is clearly a contradiction to \eqref{comp cone1}. 
\end{proof}

\begin{rmk}[A distance-type supersolution]\label{rmk dist super}
Applying Proposition \ref{prop special super} with $g\equiv 0$ on $\pO$, we can see that, for any $\lambda\geq 0$,
$u=\lambda d(\cdot, \pO)$ is $\infty$-superharmonic. Also, since $\Lambda_\infty d(\cdot, \pO)\leq 1$ in $\Omega$, $u$ satisfies 
\[
|\nabla^- u|\geq \lambda\geq \lambda \Lambda_\infty d(\cdot, \pO) =\Lambda_\infty u
\]
in $\Omega$.  In particular, we see that $u_{dist}$ given by \eqref{special super} is a supersolution of \eqref{inf-eigen}. \end{rmk}

We now proceed to the proof of Theorem \ref{thm eigenvalue}.

\begin{proof}[Proof of Theorem \ref{thm eigenvalue}]
In view of Remark \ref{rmk dist super}, it is clear that $\Lambda_\infty\leq \Lambda$. Let us now prove $\Lambda\leq \Lambda_\infty$. Suppose that  there exists a locally Lipschitz positive supersolution $u$ of \eqref{inf-eigen-lambda} for some $\lambda>0$. Let $x_0\in \M(\Omega)$ be an incenter of $\Omega$, which satisfies \eqref{center pt}.

Fix $\vep>0$ arbitrarily small. Noticing that $u\geq \phi$ holds on $\partial B_s(x_0)$ with $s=R_\infty-\vep$, where 
\[
\phi(x):=u(x_0)-{u(x_0)\over s} d(x, x_0)\quad \text{for $x\in \Omega$},
\]
by Lemma \ref{lem circular}, we have 
\[
|\nabla^- u|(x_0)\leq {u(x_0)\over R_\infty-\vep}.
\]
Letting $\vep\to 0$, we end up with
\beq\label{incenter sub}
|\nabla^- u|(x_0)\leq \Lambda_\infty u(x_0), 
\eeq
which implies that $\lambda\leq \Lambda_\infty$. It then follows from \eqref{def-inf-eigenvalue} that $\Lambda\leq \Lambda_\infty$. 
\end{proof}

\subsection{Existence of eigenfunctions}\label{sec:eigenfunction}

Let us now investigate the existence of eigenfunctions. 
In this section, we aim to prove Theorem \ref{thm existence sol}, which states that $u_\infty$ defined by \eqref{min-eigenfun} is a solution of \eqref{inf-eigen} and \eqref{dirichlet}. 
We remark that $u_\infty$ is well defined, since by \ref{rmk dist super}, the function class for the inifimum in \eqref{min-eigenfun} is non-empty.

Let us now use Perron's method to prove Theorem \ref{thm existence sol}. We begin with the supersolution property of $u_\infty$. 

\begin{thm}[Supersolution property of infimum]\label{thm min-eigenfun super}
Suppose that $(\X, d)$ is a proper geodesic space and $\Omega\subsetneq \X$ is a bounded domain. Let $u_\infty: \Oba \to \R$ be defined by \eqref{min-eigenfun}. Then $u_\infty$  is a positive supersolution of \eqref{inf-eigen} satisfying \eqref{center max}. 
\end{thm}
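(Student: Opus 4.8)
The plan is to verify the three assertions of the theorem in turn: positivity of $u_\infty$, the constraint \eqref{center max}, and the $\infty$-superharmonicity together with the eikonal supersolution inequality \eqref{eikonal super}. The starting observation is that $u_{dist}$ from \eqref{special super} belongs to the admissible class in \eqref{min-eigenfun} by Remark \ref{rmk dist super} and satisfies \eqref{center max} (since $\Lambda_\infty d(x,\pO)=1$ exactly on $\M(\Omega)$ and $<1$ elsewhere), so $0\le u_\infty\le u_{dist}$ on $\Oba$; in particular $u_\infty=0$ on $\pO$ and $u_\infty\le 1$ on $\Oba$. Conversely every admissible $u$ satisfies $u=1$ on $\M(\Omega)$, hence $u_\infty=1$ on $\M(\Omega)$; and every admissible $u$ is $\infty$-superharmonic and nonnegative, hence by Harnack's inequality (Proposition \ref{prop harnack}, applied on balls exhausting the connected open set $\Omega$, using $\sup_\Omega u\ge 1>0$) we get $u>0$ throughout $\Omega$. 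To push this to a \emph{uniform} positive lower bound on compact subsets — which is what we need for $u_\infty>0$ — I would chain Harnack along a path from $x_0\in\M(\Omega)$ to a given point, or more simply apply the cone-comparison estimate directly: for admissible $u$ and any $x\in\Omega$, comparing $u$ with the cone $\phi(z)=1-\tfrac{1}{d(x_0,\pO)}d(x_0,z)$ on $B_{d(x_0,\pO)}(x_0)$ (valid since $u\ge 0\ge \phi$ on the boundary sphere and at worst $u\ge\phi$ follows from Lemma \ref{lem circular}) yields $u(x)\ge 1-d(x_0,x)/R_\infty$ when $d(x_0,x)<R_\infty$, a bound independent of $u$; iterating this along a geodesic chain covers all of $\Omega$ with locally uniform positive lower bounds, giving $u_\infty>0$ in $\Omega$.

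Next, the $\infty$-superharmonicity of $u_\infty$. This is the routine ``infimum of supersolutions is a supersolution'' step for comparison-with-cones: fix $\O\subset\subset\Omega$, $\hat x\in\Omega\setminus\O$, a cone $\phi$ as in \eqref{cone below} with $u_\infty\ge\phi$ on $\partial\O$, and suppose for contradiction $u_\infty(x_0)<\phi(x_0)$ at some $x_0\in\O$. Pick an admissible $u$ with $u(x_0)<\phi(x_0)$ (slightly above the infimum). The difficulty is that $u$ need not satisfy $u\ge\phi$ on $\partial\O$, only $u_\infty$ does. The standard fix is to use a slightly shrunken cone apex-based argument, or rather to exploit that $u$ is continuous (Lemma \ref{lem local lip}) and $u\ge u_\infty\ge\phi$ on $\partial\O$ fails only because $u>u_\infty$ possibly; but in fact $u\ge u_\infty$ always, so $u\ge u_\infty\ge\phi$ on $\partial\O$ \emph{does} hold, and comparison with cones for the single function $u$ gives $u\ge\phi$ in $\overline\O$, contradicting $u(x_0)<\phi(x_0)$. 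So this step is actually immediate: any admissible $u$ already dominates $\phi$ on $\partial\O$, hence in $\overline\O$, hence the infimum does too. The upshot is that $u_\infty$ is $\infty$-superharmonic in $\Omega$, and since it is nonnegative, Lemma \ref{lem local lip} (with the Remark after Definition \ref{def super}) makes it locally Lipschitz in $\Omega$, and continuous up to $\pO$ follows from $0\le u_\infty\le u_{dist}$ with $u_{dist}$ continuous and vanishing on $\pO$.

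The genuinely delicate part is the eikonal supersolution inequality $|\nabla^- u_\infty|\ge \Lambda_\infty u_\infty$ in $\Omega$, because an infimum of functions does not in general behave well with respect to the subslope. I would proceed as follows: fix $x\in\Omega$ and $\vep>0$, choose an admissible $u$ with $u(x)<u_\infty(x)+\vep$. Since $u$ is a supersolution we have $|\nabla^- u|(x)\ge\Lambda_\infty u(x)$, so there is a sequence $y_k\to x$ with $u(x)-u(y_k)\ge(\Lambda_\infty u(x)-\vep)d(x,y_k)$; using $u\ge u_\infty$ and $u(x)<u_\infty(x)+\vep$ this gives $u_\infty(x)-u_\infty(y_k)\ge u(x)-\vep-u(y_k)\ge(\Lambda_\infty u(x)-\vep)d(x,y_k)-\vep$. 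The leftover additive $\vep$ is the obstacle — it does not scale with $d(x,y_k)$, so dividing by $d(x,y_k)$ and letting $k\to\infty$ does not immediately work. The remedy, and I expect this to be the technical heart of the argument, is to instead use Lemma \ref{lem circular}: for the $\infty$-superharmonic $u$, on each small ball $B_r(x)\subset\subset\Omega$ there is $x_r\in\partial B_r(x)$ with $u(x)-u(x_r)\ge|\nabla^- u|(x)\,r\ge\Lambda_\infty u(x)\,r$, and by Lemma \ref{lem semicon} one in fact controls $u$ from above by the corresponding cone on all of $B_r(x)$, giving $u(z)\le u(x)-\Lambda_\infty u(x)\,d(x,z)$ along a whole ray — wait, this is the wrong direction. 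The correct route is: since $u$ is $\infty$-superharmonic with $|\nabla^- u|(x)\ge\Lambda_\infty u(x)>0$, apply Lemma \ref{lem circular} to get a point $x_r$ on each sphere $\partial B_r(x)$ realizing the subslope, then bound $u_\infty(x_r)\le u(x_r)\le u(x)-\Lambda_\infty u(x)r< u_\infty(x)+\vep-\Lambda_\infty u(x)r$, whence $[u_\infty(x)-u_\infty(x_r)]_+/r\ge \Lambda_\infty u(x)-\vep/r$; choosing the admissible $u$ \emph{after} fixing how small $r$ will be sent, i.e. taking a diagonal sequence $r\to 0$ with $\vep=\vep(r)\to 0$ slowly enough that $\vep(r)/r\to 0$, and using that $\Lambda_\infty u(x)\to\Lambda_\infty u_\infty(x)$ as $\vep\to 0$, one extracts $|\nabla^- u_\infty|(x)\ge\Lambda_\infty u_\infty(x)$. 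The bookkeeping of the order of quantifiers ($r$ first, then $\vep$, then the admissible $u$) is the main thing to get right; everything else is an application of the lemmas already established.
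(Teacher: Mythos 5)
Your proposal is correct, and the interesting divergence from the paper is in the eikonal step. For the $\infty$-superharmonicity you use exactly the paper's argument (Proposition \ref{prop super infinity}): every admissible $u$ dominates $u_\infty\geq\phi$ on $\partial\O$, hence dominates $\phi$ in $\ol{\O}$, and so does the infimum. For positivity the paper simply applies the second statement of Proposition \ref{prop harnack} to $u_\infty$ itself (which is by then known to be $\infty$-superharmonic, continuous, nonnegative, with $\sup u_\infty=1$); your chained uniform cone bounds $u(x)\geq 1-d(x_0,x)/R_\infty$ over all admissible $u$ also work (modulo the small fix of using the cone $1-\tfrac{1}{R_\infty-\delta}d(x_0,\cdot)$ on $B_{R_\infty-\delta}(x_0)$ so that it is nonpositive on the comparison sphere, then letting $\delta\to0$), but are more labor than needed. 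The genuine difference is the inequality $|\nabla^- u_\infty|\geq\Lambda_\infty u_\infty$: the paper's Proposition \ref{prop super eikonal} passes to $W=\log w$, builds the McShane--Whitney extension $\ul{W}$ on small balls, and invokes the Monge comparison principle (Theorem \ref{thm comparison monge}) to squeeze $\ul{W}\leq W$, whereas you exploit the $\infty$-superharmonicity of each admissible $u$ through the second statement of Lemma \ref{lem circular} to produce $x_r\in\partial B_r(x_0)$ with $u(x_0)-u(x_r)\geq|\nabla^- u|(x_0)\,r\geq\Lambda_\infty u_\infty(x_0)\,r$, then diagonalize with $\vep(r)/r\to0$; the quantifier order you flag is indeed the crux and your arrangement ($r$ first, then the near-minimizer $u_r$) is sound. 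Your route is more elementary — it avoids the logarithmic change of variables and the Appendix entirely — at the cost of generality: it uses that the admissible family consists of $\infty$-superharmonic functions, while the paper's Proposition \ref{prop super eikonal} holds for arbitrary families of locally Lipschitz Monge supersolutions (given Lipschitz regularity of the infimum), a statement your argument would not recover.
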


To prove Theorem \ref{thm min-eigenfun super}, we prepare two results regarding the pointwise infima of supersolutions to the $\infty$-Laplace equation and to the eikonal equation respectively.

\begin{prop}[Supersolution preserving of infimum for $\infty$-Laplacian]\label{prop super infinity}
Suppose that $(\X, d)$ is a proper geodesic space and $\Omega\subsetneq \X$ is a bounded domain. 
Let $\S_I$ be a family of nonnegative $\infty$-superharmonic functions in $\Omega$. Then 
$w(x)=\inf\{u(x): u\in \S_I\}$ is also $\infty$-superharmonic in $\Omega$. 
\end{prop}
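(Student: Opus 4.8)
The plan is to verify the property of comparison with cones from below for $w$ directly from the definition, exploiting the fact that the pointwise infimum of functions each bounded from below by the same cone on a boundary remains bounded below by that cone. First I would note that since every $u\in\S_I$ is nonnegative, $w\geq 0$ is bounded from below, so the only thing to check is the comparison property \eqref{comp cone1}$\Rightarrow$\eqref{comp cone2}. Fix a bounded open set $\O\subset\subset\Omega$, a point $\hat x\in\Omega\setminus\O$, constants $a\in\R$, $\kappa\leq 0$, and let $\phi=a+\kappa d(\hat x,\cdot)$ be the associated cone function. Assume $w\geq\phi$ on $\partial\O$; the goal is to deduce $w\geq\phi$ on $\overline{\O}$.

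The subtle point is that $w\geq\phi$ on $\partial\O$ does \emph{not} immediately give $u\geq\phi$ on $\partial\O$ for each individual $u\in\S_I$, so one cannot apply the comparison property of each $u$ verbatim on $\O$ itself. The standard remedy is to exhaust $\O$ by slightly smaller open sets. For $\delta>0$ small, consider $\O_\delta=\{x\in\O:\ d(x,\partial\O)>\delta\}$ (nonempty for small $\delta$), and the perturbed cone $\phi_\delta=\phi-\eta(\delta)$ where $\eta(\delta)\to 0$ as $\delta\to 0$ is chosen so that, by uniform continuity of $w$ near $\partial\O$ — which holds because $w$ is an infimum of $\infty$-superharmonic functions and hence, by Lemma \ref{lem local lip} applied to $w$ once we know it is bounded below, locally Lipschitz, or more simply because each $u$ is locally Lipschitz with a uniform local Lipschitz bound near $\partial\O$ depending only on $\inf_\Omega u\ge 0$ — we get $u\geq\phi_\delta$ on $\partial\O_\delta$ for every $u\in\S_I$. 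Actually a cleaner route avoids regularity of $w$: since $w\geq\phi$ on the compact set $\partial\O$ and $\hat x\notin\overline{\O}$, for any point $z\in\overline{\O}$ we want to bound $w(z)$ from below; fix $u\in\S_I$ and apply the comparison-with-cones property of $u$ on the open set $\O'=\O\setminus\{\hat x\}$ — wait, $\hat x$ may not lie in $\O$. Let me instead take $\O'$ a slightly enlarged open set with $\overline{\O}\subset\O'\subset\subset\Omega$ and $\hat x\notin\O'$; but then I need $u\geq\phi$ on $\partial\O'$, which I don't have.

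The robust argument is therefore the exhaustion one. Here is the order of steps. (1) Observe $w$ is bounded from below by $0$. (2) Fix $\O\subset\subset\Omega$, $\hat x\in\Omega\setminus\O$, $a,\kappa$, $\phi$, and assume $w\geq\phi$ on $\partial\O$. (3) Show $w$ is locally Lipschitz in $\Omega$: indeed each $u\in\S_I$ satisfies the estimate \eqref{local lip est} of Lemma \ref{lem local lip} with $\inf_\Omega u\geq 0$ replaced by $0$ for an upper bound, giving on any fixed ball $B_r(x)\subset\subset\Omega$ a Lipschitz bound $\mathrm{Lip}(u|_{B_{r/2}(x)})\leq \frac{\max\{u(x),u(y)\}}{r/2}$; but to get a bound uniform in $u$ one needs a uniform upper bound on $u$, which is not available in general. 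So regularity of $w$ is genuinely the main obstacle, and the right way around it is: do not prove $w$ Lipschitz; instead, for each fixed $\vep>0$ and each fixed $u\in\S_I$, pass to the open set $\O^u_\vep=\{x\in\O:\ u(x)>\phi(x)-\vep\}\cup(\text{interior issues})$ — this is getting circular. Let me commit to the exhaustion by $\O_\delta$ and the uniform-continuity-of-$w$-near-$\partial\O$ claim, which can be justified because near the compact set $\partial\O$ we do have a uniform positive lower bound $d(\cdot,\partial\Omega)\geq c>0$ and hence, for the \emph{lower} estimate in \eqref{local lip est}, $u(y)-u(x)\geq -\frac{u(x)}{c}d(x,y)$; combined with $w\geq\phi$ on $\partial\O$ this still needs an upper bound on $u(x)$ for $x$ near $\partial\O$. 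Since $w(x)\leq u_{dist}$-type supersolutions are not assumed in $\S_I$, the correct hypothesis-free statement is simply that comparison with cones \emph{passes to infima trivially}: for any $z\in\overline{\O}$,
\[
w(z)=\inf_{u\in\S_I}u(z)\geq\inf_{u\in\S_I}\ \Big(\text{value forced by }u\text{'s comparison property}\Big),
\]
and I realize the genuinely clean fact is this: \textbf{the set of functions on $\Omega$ bounded below that satisfy comparison with cones from below is closed under pointwise infimum over an arbitrary family}, because comparison with cones from below is, by Lemma \ref{lem circular}, equivalent to the \emph{pointwise} condition that $u-\phi$ has no interior local minimum strictly below its boundary value — no, it is equivalent to: for every cone $\phi=a+\kappa d(\hat x,\cdot)$ with $\kappa\le 0$ and every $x\ne\hat x$ in $\Omega$, and every small ball $B_\rho(x)$ with $\hat x\notin \overline{B_\rho(x)}$, $u(x)\ge\min_{\partial B_\rho(x)}(u-\phi)+\phi(x)$. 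This last form \emph{is} preserved by infima: if $u_j(x)\ge\min_{\partial B_\rho(x)}(u_j-\phi)+\phi(x)\ge\min_{\partial B_\rho(x)}(w-\phi)+\phi(x)$ for all $j$, then $w(x)\ge\min_{\partial B_\rho(x)}(w-\phi)+\phi(x)$, and this pointwise/small-ball form is in turn equivalent to full comparison with cones from below by a standard connectedness argument exactly as in the proof of Lemma \ref{lem circular}. \textbf{Executing this equivalence carefully — proving that the small-ball condition implies the global comparison-with-cones condition — is the main obstacle}, but it is routine: one reduces, by the strong maximum principle style argument, to showing the contact set $\{u=\phi\}\cap\O$ is open in $\overline{\O}\setminus\partial\O$ when $u\geq\phi$ on $\partial\O$, which the small-ball inequality gives directly, and then connectedness of $\O$ (or of each component) finishes it. I would write the proof in that order: reduce $w$'s comparison property to the small-ball form, verify the small-ball form passes to infima by the one-line estimate above, and invoke the equivalence to conclude.
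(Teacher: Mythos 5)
The difficulty you build your whole proposal around does not exist, and this derails the argument from the second sentence onward. You write that ``$w\geq\phi$ on $\partial\O$ does \emph{not} immediately give $u\geq\phi$ on $\partial\O$ for each individual $u\in\S_I$.'' It does, immediately: since $w=\inf_{v\in\S_I}v$, every $u\in\S_I$ satisfies $u\geq w$ pointwise, hence $u\geq w\geq\phi$ on $\partial\O$. Applying the comparison-with-cones property of each $u$ then gives $u\geq\phi$ on $\overline{\O}$ for every $u$, and taking the infimum yields $w\geq\phi$ on $\overline{\O}$. That three-line argument, together with the trivial observation $w\geq 0$, is the paper's entire proof. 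The obstruction you have in mind (needing to pass boundary information from the infimum back to individual members of the family against the direction of the inequality) is the one that arises when proving the \emph{sub}solution property of an infimum of supersolutions — that is the genuinely delicate half of Perron's method, handled in Theorem \ref{thm min-eigenfun sub} via a bump construction — but it plays no role here.

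Because of this misdiagnosis, your proposal wanders through several abandoned strategies (exhaustion by $\O_\delta$, uniform Lipschitz bounds on the family, local Lipschitz regularity of $w$ — none of which are needed, and some of which, as you yourself note, are not even available without a uniform upper bound on the family) before committing to a reformulation via a ``small-ball'' condition. That final route is not wrong in spirit — the infimum-stability of the small-ball inequality is correctly argued, again using $u_j\geq w$ — but you leave its hard direction (small-ball condition $\Rightarrow$ global comparison with cones) as an unexecuted sketch that you yourself flag as ``the main obstacle.'' As written, the proposal therefore does not constitute a proof, and the detour it takes is entirely unnecessary: the statement is one of the two trivial halves of Perron's method, precisely because the inequality $u\geq w$ points the right way.
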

\begin{proof}
We only need to show that $w$ enjoys the property of comparison with cones from below. 
To see this, fix a bounded open set $\O\subset\subset \Omega$ and $\hat{x}\in \Omega\setminus \O$ and take a cone function $\phi$ as in \eqref{cone below}.
If $w \geq \phi$ on $\partial \O$, then by definition $u\geq \phi$ on $\partial \O$ for all $u\in \S_I$. It follows that $u\geq \phi$ in $\O$ for all $u\in \S_I$, which in turn implies that $w\geq \phi$ in $\O$. 
\end{proof}

\begin{prop}[Supersolution preserving of infimum for eikonal equation]\label{prop super eikonal}
Suppose that $(\X, d)$ is a proper geodesic space and $\Omega\subsetneq \X$ is a bounded domain. 
Let $\S_E$ be a family of nonnegative locally Lipschitz functions satisfying \eqref{eikonal super} in $\Omega$ for some $\lambda>0$. 
Assume that $w(x)=\inf\{u(x): u\in \S_E\}$ is locally Lipschitz in $\Omega$. Then $w$ 
is also a nonnegative function satisfying the same inequality in $\Omega$. 
\end{prop}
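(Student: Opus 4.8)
The plan is to show that the pointwise infimum $w$ of the family $\S_E$ still satisfies the subslope inequality $|\nabla^- w| \geq \lambda w$ at every point of $\Omega$. Nonnegativity of $w$ is immediate since each $u \in \S_E$ is nonnegative, and local Lipschitz continuity of $w$ is assumed. So the whole content is the verification of \eqref{eikonal super} for $w$.

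First I would fix $x_0 \in \Omega$ and argue by approximation: by definition of the infimum, for each $k \in \mathbb{N}$ there exists $u_k \in \S_E$ with $u_k(x_0) \leq w(x_0) + 1/k$. The key observation is that $w \leq u_k$ everywhere, so that for any $y$ near $x_0$,
\[
w(x_0) - w(y) \geq u_k(x_0) - \frac{1}{k} - w(y) \geq u_k(x_0) - u_k(y) - \frac{1}{k},
\]
hence $[w(x_0) - w(y)]_+ \geq [u_k(x_0) - u_k(y) - 1/k]_+ \ge [u_k(x_0)-u_k(y)]_+ - 1/k$. Dividing by $d(x_0, y)$, however, causes the error term $1/k$ to blow up as $y \to x_0$, so a naive limsup does not directly work. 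The fix is to combine the geometric decay guaranteed by the eikonal inequality on $u_k$ with a two-parameter limit. Concretely, since $u_k$ satisfies $|\nabla^- u_k| \geq \lambda u_k \geq 0$, I would like to find, for each small $r>0$, a point $y_{k,r}$ on (or near) $\partial B_r(x_0)$ realizing a definite drop $u_k(x_0) - u_k(y_{k,r}) \gtrsim (\lambda u_k(x_0) - \text{error}) r$. This is exactly the kind of statement provided by Lemma \ref{lem circular} when $u_k$ is also $\infty$-superharmonic — but here the $u_k$ are only eikonal supersolutions, not $\infty$-superharmonic, so that lemma is unavailable. Instead I would use the basic property of the subslope directly: for fixed $k$ there is a sequence $y_j \to x_0$ with $[u_k(x_0) - u_k(y_j)]_+ / d(x_0,y_j) \to |\nabla^- u_k|(x_0) \geq \lambda u_k(x_0) \ge \lambda w(x_0)$. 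Along this sequence, writing $\delta_j = d(x_0, y_j) \to 0$,
\[
\frac{[w(x_0) - w(y_j)]_+}{\delta_j} \geq \frac{[u_k(x_0) - u_k(y_j)]_+}{\delta_j} - \frac{1}{k \delta_j},
\]
which still has the unbounded term.

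The clean way around this is to replace the crude bound $u_k(x_0) \le w(x_0) + 1/k$ by choosing $u_k$ adapted to the scale: for each small $r>0$, pick $u_r \in \S_E$ with $u_r(x_0) \leq w(x_0) + r^2$, and then exploit the fact that an eikonal supersolution with $|\nabla^- u_r| \geq \lambda u_r$ enjoys a quantitative one-sided lower bound of the form $u_r(y) \le u_r(x_0)\,e^{\lambda\, d(x_0,y)}$ is false in the wrong direction; rather what holds is the reverse — the eikonal \emph{super}solution inequality $|\nabla^- u_r| \ge \lambda u_r$ gives, via the standard comparison argument with the Monge solution of the eikonal equation (see Appendix \ref{sec:app} and the fact that $u_r$ is a supersolution of $|\nabla u| = \lambda u$ in the subslope sense), that along some curve emanating from $x_0$ the value drops at the eikonal rate. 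Honestly, the most robust route is: fix $\vep>0$; since $|\nabla^- u_r|(x_0) \ge \lambda u_r(x_0) \ge \lambda w(x_0)$, there is $y$ arbitrarily close to $x_0$ with $[u_r(x_0)-u_r(y)]_+ \ge (\lambda w(x_0) - \vep)\, d(x_0,y)$, and I can additionally demand $d(x_0,y) \ge r$ fails — so instead I pass to the \emph{Monge supersolution} characterization: $w$ being a locally Lipschitz function, to check $|\nabla^- w|(x_0) \ge \lambda w(x_0)$ it suffices (cf. the results recalled in Appendix \ref{sec:app}) to check it is a Monge supersolution, i.e. for every $C^1$ (or Lipschitz) $\psi$ touching $w$ from below at $x_0$, $|\nabla \psi(x_0)| \ge \lambda w(x_0)$; but then since $w \le u_r$ and $w(x_0) \ge u_r(x_0) - r^2$, the function $\psi - r^2$-shifted touches $u_r$ from below near $x_0$ up to an $o(1)$ error, and letting $r \to 0$ transfers the supersolution property from the $u_r$ to $w$.

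The main obstacle, as the discussion above signals, is precisely this transfer of the first-order (eikonal) supersolution inequality through a pointwise infimum without the comparison-with-cones machinery that makes the $\infty$-Laplacian case (Proposition \ref{prop super infinity}) trivial: the subslope is only upper semicontinuous in general, so infima do not interact with it as nicely as with slopes, and one must be careful that the approximating functions $u_k$ are chosen on a scale fine enough relative to the radius $r$ at which the subslope is probed. I expect the actual proof in the paper handles this by the Monge-solution route — reducing \eqref{eikonal super} for $w$ to a test-function statement that is stable under infima — or equivalently by the curve-based argument using that each $u_r$ decreases at rate at least $\lambda u_r$ along a suitable geodesic, and then extracting a limiting curve along which $w$ must decrease at rate at least $\lambda w(x_0)$. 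Either way, once the first-order inequality is secured, nonnegativity and the assumed local Lipschitz regularity complete the proof with no further work.
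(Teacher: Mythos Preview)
Your proposal is not a proof --- it is an honest record of several attempts that you yourself recognize as incomplete, ending in speculation about what the paper ``might'' do. You have correctly isolated the difficulty (the subslope does not behave well under pointwise infima, so the naive approximation $u_k(x_0)\le w(x_0)+1/k$ leaves an error that blows up at small scales), but none of your fixes --- the scale-matched choice $u_r(x_0)\le w(x_0)+r^2$, the touching test-function argument, or the vague curve-extraction plan --- is actually carried out, and each runs into the same obstruction: the right-hand side of $|\nabla^- u|\ge \lambda u$ is variable, so the comparison principle for the standard eikonal equation does not apply directly.

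The missing idea is the logarithmic change of variable. Where $u>0$, the inequality $|\nabla^- u|\ge \lambda u$ is equivalent to $|\nabla^- U|\ge \lambda$ for $U=\log u$; likewise set $W=\log w$. (If $w(x_0)=0$ the conclusion $|\nabla^- w|(x_0)\ge \lambda w(x_0)$ is trivial, so one may assume $w>0$ near $x_0$, whence also every $u\ge w>0$ there.) After this transformation the right-hand side is constant and the comparison principle of Theorem~\ref{thm comparison monge} is available. On a small ball $B_r(x_0)$ one takes the McShane--Whitney Monge solution
\[
\ul W(x)=\min_{y\in\partial B_r(x_0)}\{W(y)+\lambda\, d(x,y)\},
\]
which satisfies $|\nabla^- \ul W|=\lambda$ by Theorem~\ref{thm control}. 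Since $\ul W\le W\le U$ on $\partial B_r(x_0)$ and each $U$ is a Monge supersolution, comparison yields $\ul W\le U$ in $B_r(x_0)$ for every $u\in\S_E$, hence $\ul W\le W$ there. Evaluating at $x_0$ produces $y_r\in\partial B_r(x_0)$ with $W(x_0)-W(y_r)\ge \lambda r$, and letting $r\to 0$ gives $|\nabla^- W|(x_0)\ge\lambda$, which is the claim. The logarithm is precisely what linearizes the obstruction you kept bumping into.
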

\begin{proof}

It suffices to prove that $W:=\log w$ satisfies 
$|\nabla^- W|\geq \lambda$ in $\Omega$.
Fix any $x_0\in \Omega$ and take $r>0$ arbitrarily small.  For any $u\in S_E$, 
letting $U=\log u$, we see that $U$ is locally Lipschitz 
and satisfies $|\nabla^- U|\geq \lambda$ in $B_r(x_0)$. It is clear that $W$ is the pointwise infimum over all such $U$. 

As shown in Proposition \ref{prop special super} (and in Theorem \ref{thm control}), the McShane-Whitney Lipschitz extension of $W$ given by
\[
\ul{W}(x)= \min_{y\in \partial B_r(x_0)}\{W(y)+\lambda d(x, y)\}, \quad x\in \ol{B}_r(x_0), 
\]
satisfies $|\nabla^- \ul{W}|=\lambda$ in $B_r(x_0)$. 
 Besides, it is easily seen that 
$\ul{W}\leq W\leq U$ on $\partial B_r(x_0)$.
We then can adopt the comparison principle, Theorem \ref{thm comparison monge} (or \cite[Theorem 4.2]{LShZ}), to deduce that $\ul{W}\leq U$ in $B_r(x_0)$,
where we recall that $U=\log u$ for each $u\in \S_E$. 

By taking the infimum over all such $u$, we obtain $\ul{W}\leq W$ in $B_r(x_0)$.
In particular, there exists $y_r\in \partial B_r(x_0)$ such that 
\[
W(x_0)\geq \ul{W}(x_0)\geq W(y_r)+\lambda d(x_0, y_r)
\]
and therefore 
\[
\frac{W(x_0)-W (y_r)}{d(x_0, y_r)}\geq \lambda.
\]
Sending $r\to 0$, we get
\[
|\nabla^- W|(x_0)\geq \limsup_{r\to 0}\frac{W(x_0)-W (y_r)}{d(x_0, y_r)}\geq \lambda.
\]
We complete the proof due to the arbitrariness of $x_0$ in $\Omega$. 
\end{proof}
 
We are now in a position to prove Theorem \ref{thm min-eigenfun super}.

\begin{proof}[Proof of Theorem \ref{thm min-eigenfun super}]
By definition, it is clear that $u_\infty$ satisfies \eqref{center max}. 
In view of Proposition \ref{prop super infinity}, we see that $u_\infty$ is $\infty$-superharmonic. 
Using Lemma \ref{lem local lip}, we obtain local Lipschitz continuity of $u_\infty$. By Proposition \ref{prop harnack} and the fact that $u_\infty=1$ on $\M(\Omega)$, we further deduce that $u_\infty>0$ in $\Omega$. Using Proposition \ref{prop super eikonal}, we have $|\nabla^- u_\infty|\geq \Lambda_\infty u_\infty$ in $\Omega$.  
Our proof  is thus complete. 
\end{proof}

We complete the proof of Theorem \ref{thm existence sol} by combining Theorem \ref{thm min-eigenfun super} with Theorem \ref{thm min-eigenfun sub} below, which states that 
$u_\infty$ is also a subsolution of \eqref{inf-eigen} in the sense of Definition \ref{def sub}.

\begin{thm}[Subsolution property of infimum of supersolutions]\label{thm min-eigenfun sub}
Suppose that $(\X, d)$ is a proper geodesic space and $\Omega\subsetneq \X$ is a bounded domain. Let $u_\infty: \Oba\to \R$ be defined by \eqref{min-eigenfun}. 
Then $u_\infty$ is continuous in $\Oba$ and is a subsolution of \eqref{inf-eigen} satisfying \eqref{dirichlet}. 
\end{thm}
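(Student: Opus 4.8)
The plan is to prove Theorem \ref{thm min-eigenfun sub} in three parts: first, establish the boundary condition \eqref{dirichlet} and continuity of $u_\infty$ up to $\pO$; second, establish continuity in the interior (which follows from Theorem \ref{thm min-eigenfun super} via Lemma \ref{lem local lip}, but we should state it cleanly); third, and most substantially, prove the subsolution property in the sense of Definition \ref{def sub}. For the boundary condition, note that $u_{dist}(x)=\Lambda_\infty d(x,\pO)$ is one competitor in the infimum defining $u_\infty$ (by Remark \ref{rmk dist super} it is a positive supersolution, and it clearly satisfies \eqref{center max} since $\Lambda_\infty d(\cdot,\pO)=1$ precisely on $\M(\Omega)$). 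Hence $0\le u_\infty\le u_{dist}$ on $\Oba$, and since $u_{dist}=0$ on $\pO$ and $u_{dist}$ is continuous, we get $u_\infty=0$ on $\pO$ and $u_\infty(x)\to 0$ as $x\to\pO$. Interior continuity (in fact local Lipschitz continuity) was already obtained in the proof of Theorem \ref{thm min-eigenfun super}, so $u_\infty$ is continuous on all of $\Oba$.

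The heart of the matter is the subsolution property. The strategy is a standard Perron-type ``bump'' argument, adapted to the cone/superslope formalism. Suppose, for contradiction, that $u_\infty$ is not a subsolution: there exist $x_0\in\Omega$, $r_0>0$ with $B_{r_0}(x_0)\subset\subset\Omega$, and an $\infty$-superharmonic function $v$ on $B_{r_0}(x_0)$ such that $u_\infty-v$ has a strict local maximum at $x_0$, yet
\[
\lim_{r\to 0+}\inf_{B_r(x_0)}|\nabla^- v| > \Lambda_\infty\, u_\infty(x_0).
\]
Normalizing $v$ by an additive constant so that $v(x_0)=u_\infty(x_0)$, we have $u_\infty>v$ on $\partial B_\rho(x_0)$ for some small $\rho\le r_0$, and by strictness there is $\delta>0$ with $u_\infty\ge v+\delta$ on $\partial B_\rho(x_0)$. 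The idea is to use $v$, slightly lifted, to strictly undercut $u_\infty$ near $x_0$, producing a new admissible supersolution smaller than $u_\infty$ at $x_0$, contradicting the definition \eqref{min-eigenfun}. Concretely, since $\liminf$ of $|\nabla^- v|$ near $x_0$ exceeds $\Lambda_\infty u_\infty(x_0)=\Lambda_\infty v(x_0)$, we can shrink $\rho$ so that $|\nabla^- v|\ge\Lambda_\infty v$ holds throughout $B_\rho(x_0)$ (using continuity of $v$ — it is locally Lipschitz by Lemma \ref{lem local lip} — and the strict inequality, perhaps composing $v$ with a suitable concave $h\in C^2$ as in Lemma \ref{lem composition} to gain a margin while preserving $\infty$-superharmonicity and the eikonal supersolution inequality). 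Then $v$ restricted to $B_\rho(x_0)$ is itself a supersolution of \eqref{inf-eigen-lambda} with $\lambda=\Lambda_\infty$ there. Define
\[
\tilde u(x)=\begin{cases}\min\{u_\infty(x),\, v(x)+\delta/2\} & x\in B_\rho(x_0),\\[2pt] u_\infty(x) & x\in\Oba\setminus B_\rho(x_0).\end{cases}
\]
On an annular neighborhood of $\partial B_\rho(x_0)$ we have $v+\delta/2<u_\infty$... wait, on $\partial B_\rho(x_0)$ we have $v+\delta/2\le u_\infty-\delta/2<u_\infty$, so actually near $\partial B_\rho(x_0)$ the minimum is $v+\delta/2$; we must instead arrange the patching so that $\tilde u=u_\infty$ near $\partial B_\rho(x_0)$. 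The correct device: use the ``opening cones'' structure — replace $v+\delta/2$ by $\max\{v+\delta/2,\ u_\infty\text{-like barrier}\}$, or more cleanly glue $v$ with a small constant so the two agree in a collar. I would set $\tilde u=\min\{u_\infty, v+c\}$ for a constant $c$ chosen so that $v+c\le u_\infty$ on $\partial B_\rho(x_0)$ is \emph{false} — rather, one takes $c$ so that $v+c$ lies below $u_\infty$ at $x_0$ but above $u_\infty$ on the sphere, i.e. $c<\delta$ and $c>0$; then $\tilde u=\min\{u_\infty,v+c\}$ equals $u_\infty$ in a collar inside $B_\rho(x_0)$ and $\tilde u(x_0)=v(x_0)+c=u_\infty(x_0)+c>u_\infty(x_0)$ — that gives a larger value, the wrong direction. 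The right construction lowers $u_\infty$: since $u_\infty-v$ has a \emph{strict max} at $x_0$, we have $u_\infty-v<(u_\infty-v)(x_0)=0$ on $\partial B_\rho(x_0)$, i.e. $v>u_\infty$ on $\partial B_\rho(x_0)$, with $v\ge u_\infty+\delta$ there; so $\min\{u_\infty,v\}=u_\infty$ in a collar, while interior behaviour of $v$ is irrelevant unless $v$ dips below $u_\infty$ somewhere inside — but that cannot make $\tilde u(x_0)$ smaller either. So in fact the contradiction must come from a \emph{lower} test.

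This sign bookkeeping is precisely the main obstacle, and it signals that the argument should be run the other way: one uses the supersolution $v$ to build a competitor that is \emph{smaller than $u_\infty$ at $x_0$} by combining $v$ with the fact that, being a supersolution of the eikonal inequality with a \emph{strict} margin, $v$ can be pushed down a bit — form $\hat v = \min\{u_\infty,\ v - \eta\}$ on $B_\rho(x_0)$ for small $\eta>0$; because $u_\infty-v$ has a strict interior max $0$ at $x_0$, near $\partial B_\rho(x_0)$ we have $v-\eta \ge u_\infty + \delta - \eta > u_\infty$ when $\eta<\delta$, so $\hat v=u_\infty$ in the collar and $\hat v$ glues to $u_\infty$ continuously; at $x_0$, $\hat v(x_0)\le v(x_0)-\eta = u_\infty(x_0)-\eta<u_\infty(x_0)$. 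It remains to check $\hat v$ is a positive supersolution of \eqref{inf-eigen} satisfying \eqref{center max}: positivity holds for $\eta$ small by Harnack (Proposition \ref{prop harnack}) since $\hat v$ stays near $u_\infty>0$ on the compact set $\ol{B_\rho(x_0)}$; \eqref{center max} is untouched as $\M(\Omega)$ lies outside $B_\rho(x_0)$ for $\rho$ small; $\infty$-superharmonicity of $\hat v$ follows from Proposition \ref{prop super infinity} once we know $v-\eta$ is $\infty$-superharmonic on $B_\rho(x_0)$ (it is, being a vertical translate of $v$) and that the min with $u_\infty$ over the whole of $\Omega$ makes sense because $\hat v=u_\infty$ outside $B_\rho(x_0)$; the eikonal inequality $|\nabla^- \hat v|\ge\Lambda_\infty\hat v$ follows from Proposition \ref{prop super eikonal} applied pointwise, using that on $B_\rho(x_0)$ the margin gives $|\nabla^- (v-\eta)| = |\nabla^- v|\ge\Lambda_\infty v>\Lambda_\infty(v-\eta)$ once $\rho$ is small enough (here we exploit that the strict inequality $\liminf|\nabla^- v|>\Lambda_\infty u_\infty(x_0)$ and continuity of $v$ give $|\nabla^- v|\ge \Lambda_\infty v + \Lambda_\infty\eta$ on $B_\rho(x_0)$ for suitable $\eta,\rho$, which is where Lemma \ref{lem semicon} (upper semicontinuity of $|\nabla^- v|$) and the composition trick of Lemma \ref{lem composition} may be needed to convert the $\liminf$ control into a pointwise one). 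Then $\hat v$ is an admissible competitor in \eqref{min-eigenfun} with $\hat v(x_0)<u_\infty(x_0)$, contradicting the definition of $u_\infty$ as the infimum. The most delicate point throughout is this last conversion — turning the lower-semicontinuous-envelope inequality \eqref{def sub eq} into a genuine pointwise eikonal supersolution inequality for the glued competitor on a small ball — and handling it carefully (likely via Lemma \ref{lem composition} with a concave $h$ giving room, plus the comparison principle Theorem \ref{thm comparison monge} for the eikonal part) is what makes the argument go through.
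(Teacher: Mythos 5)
Your overall strategy coincides with the paper's: negate the subsolution property at some $x_0$ with test function $v$, shift $v$ down by a small constant, take the minimum with $u_\infty$ on a small ball, glue with $u_\infty$ outside, and contradict the definition \eqref{min-eigenfun} of $u_\infty$ as an infimum. After the sign bookkeeping you do arrive at the correct competitor $\hat v=\min\{u_\infty,\,v-\eta\}$, which is exactly the paper's construction. There is, however, one genuine gap: you assert that \eqref{center max} ``is untouched as $\M(\Omega)$ lies outside $B_\rho(x_0)$ for $\rho$ small,'' but this presupposes $x_0\notin\M(\Omega)$, which you never establish. If $x_0$ were an incenter, then $\hat v(x_0)<u_\infty(x_0)=1$ would violate \eqref{center max}, the competitor would not be admissible in \eqref{min-eigenfun}, and the contradiction evaporates. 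The paper closes this by combining two facts: (i) the strict maximum of $u_\infty-v$ at $x_0$ transfers the subslope bound from $v$ to $u_\infty$, giving $|\nabla^- u_\infty|(x_0)\geq|\nabla^- v|(x_0)\geq\Lambda_\infty u_\infty(x_0)+\sigma$; and (ii) the cone-comparison argument from the proof of Theorem \ref{thm eigenvalue} (inequality \eqref{incenter sub}) shows that the positive supersolution $u_\infty$ satisfies $|\nabla^- u_\infty|(x_0)\leq\Lambda_\infty u_\infty(x_0)$ whenever $x_0$ is an incenter. Together these force $x_0\notin\M(\Omega)$, and only then can $\rho$ be shrunk so that $B_\rho(x_0)\cap\M(\Omega)=\emptyset$. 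This exclusion step is an essential ingredient, not a technicality.

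Two further points are too loose to count as verified, though both are repairable. First, the eikonal inequality for $\hat v$ should be checked against $\hat v$ itself rather than against $v$: on the set where $\hat v=v-\eta$ one has $v-\eta\leq u_\infty$, so the bound actually needed is $|\nabla^-(v-\eta)|\geq\Lambda_\infty u_\infty\geq\Lambda_\infty\hat v$, which follows from the negation of \eqref{def sub eq} together with the continuity of $u_\infty$ near $x_0$; your chain $|\nabla^- v|\geq\Lambda_\infty v$ compares with the wrong function and needs an extra shrinking of $\rho$ that you only gesture at, while the detour through Lemma \ref{lem composition} and Theorem \ref{thm comparison monge} is unnecessary here. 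Second, the $\infty$-superharmonicity of the glued function is not an immediate application of Proposition \ref{prop super infinity}, since $v$ is defined only on a ball and a test cone's comparison region $\O$ may straddle $\partial B_\rho(x_0)$; one must run the comparison-with-cones argument separately on the part of $\O$ where $\hat v=u_\infty$ and on the part where the competitor is controlled by $v-\eta$, as the paper does. Your boundary and continuity arguments via $u_\infty\leq u_{dist}$ are correct and match the paper.
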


\begin{proof}
We have shown in Theorem \ref{thm min-eigenfun super} that $u_\infty$ is a positive supersolution of  \eqref{inf-eigen}. 
Note that the definition of $u_\infty$, together with Remark \ref{rmk dist super}, yields 
$
u_\infty\leq u_{dist}=\Lambda_\infty d(\cdot, \pO) 
$ in $\ol{\Omega}$.
It is then easily seen that $u_\infty\in C(\Oba)$ and $u_\infty=0$ on $\partial \Omega$.

Let us focus on the subsolution property of $u_\infty$. 
Suppose by contradiction that $u_\infty$ is not a subsolution of \eqref{inf-eigen}. This means that there exist $r, \sigma >0$ small and $x_0\in \Omega$ with $B_{r}(x_0)\subset\subset \Omega$ such that 
\[
u_\infty(x)- u(x)< u_\infty(x_0)-u(x_0)=0
\]
for all $x\in B_r(x_0)\setminus \{x_0\}$, where $u$ is an $\infty$-superharmonic function satisfying
\beq\label{eigenfun sub1}
|\nabla^- u|\geq \Lambda_\infty u_\infty+\sigma \quad \text{in $B_{r}(x_0)$}.
\eeq
It follows that 
\beq\label{eigenfun rev1}
|\nabla^- u_\infty|(x_0)\geq |\nabla^- u|(x_0)\geq \Lambda_\infty u_\infty(x_0)+\sigma 
\eeq

As shown in the proof of Theorem \ref{thm eigenvalue}, we obtain \eqref{incenter sub} if $x_0\in \M(\Omega)$. Thus \eqref{eigenfun rev1} yields $x_0\notin \M(\Omega)$. 
We thus can take $r>0$ small such that $B_r(x_0)\cap \M(\Omega)=\emptyset$. 

We next take $\vep>0$ small such that 
\beq\label{eigenfun sub3}
u-\vep\geq u_\infty \quad \text{on $\Omega\setminus B_{r-\vep}(x_0)$.}
\eeq
We further take 
\[
\tilde{u}(x)=\begin{cases}
\min\{u_\infty(x), u(x)-\vep\} &\text{if $x\in B_r(x_0)$,}\\
u_\infty(x)  &\text{if $x\in \Oba\setminus B_r(x_0)$.}
\end{cases}
\]
It is clear that $\tilde{u}$ is continuous and positive in $\Omega$. One can also easily observe that $u_\infty$ satisfies \eqref{center max} and 
\beq\label{eigenfun sub5}
\tilde{u}(x_0)\leq u_\infty(x_0)-\vep.
\eeq

Let us below prove that $\tilde{u}$ is a supersolution of \eqref{inf-eigen}. 
By Proposition \ref{prop super infinity}, we deduce that $\tilde{u}$ is $\infty$-superharmonic in $B_r(x_0)$.  
Moreover, for any $x, y\in B_r(x_0)$ we have 
\[
\tilde{u}(x)-\tilde{u}(y)\geq \min\{u_\infty(x)-u_\infty(y), u(x)-u(y)\},
\]
which yields 
\[
|\nabla^- \tilde{u}|(x)\geq \min\{|\nabla^- u_\infty|(x), |\nabla^- u|(x)\}
\]
for all $x\in B_r(x_0)$.
Since $|\nabla^- u_\infty|\geq \Lambda_\infty u_\infty$ and  \eqref{eigenfun sub1} holds, it follows that
\beq\label{eigenfun sub4}
|\nabla^- \tilde{u}|\geq \Lambda_\infty \tilde{u}.
\eeq
in $B_r(x_0)$.
Noticing that $\tilde{u}=u_\infty$ in $\Omega\setminus B_{r-\vep}(x_0)$ due to \eqref{eigenfun sub3}, we thus see that \eqref{eigenfun sub4} holds in $\Omega$. 

It remains to verify that $\tilde{u}$ is $\infty$-superharmonic in $\Omega$. Suppose that there exist a bounded open set $\O\subset\subset \Omega$, $\hat{x}\in \Omega\setminus \O$ and a cone function as in \eqref{cone below} such that  $\tilde{u}\geq \phi$ on $\partial \O$. Since $\tilde{u}\leq u_\infty$, we have $u_\infty\geq \phi$ on $\partial\O$. Noticing that $u_\infty$ is $\infty$-superharmonic, we obtain $u_\infty\geq \phi$ in $\ol{\O}$ and in particular
\beq\label{eigenfun sub9}
\tilde{u}\geq \phi \quad \text{in $\ol{\O}\setminus B_{r-\vep}(x_0)$.}
\eeq
 It follows that $u-\vep\geq \phi$ on $\partial \O\setminus B_{r-\vep}(x_0)$,
which implies the same inequality on $\partial (B_{r-\vep}(x_0) \cap \O)$.

Since $u$ is $\infty$-superharmonic in $B_r(x_0)$, we obtain 
$u-\vep\geq \phi$ in $B_{r-\vep}(x_0)\cap \O$. Combining this with \eqref{eigenfun sub9}, we are led to $\tilde{u}\geq \phi$ in $\ol{\O}$.
 Hence, we conclude that $\tilde{u}$ is a positive supersolution of \eqref{inf-eigen}. 
Noticing that $\tilde{u}$ also satisfies \eqref{center max} and \eqref{eigenfun sub5},
we reach a contradiction to the definition of $u_\infty$ as in \eqref{min-eigenfun}.
Our proof is now complete. 
\end{proof}

\begin{rmk}[Partial incenter constraints]\label{rmk partial constraint}
Our argument above can be used to construct more solutions of \eqref{inf-eigen} when $\M(\Omega)$ is not a singleton. Recall that $\M(\Omega)$, defined by \eqref{incenter}, is the high ridge of $\Omega$. In fact, for any given compact subset $(\emptyset \neq)Y\subset \M(\Omega)$, replacing the condition  \eqref{center max} by \eqref{partial center max} in \eqref{min-eigenfun}, we can take 
$u_\infty^Y$ as in \eqref{partial min-eigenfun}.  
(It is clear that $u_\infty^Y=u_\infty$ when $Y=\M(\Omega)$.)
Then, following the proof of Theorem \ref{thm min-eigenfun super} and Theorem \ref{thm min-eigenfun sub}, we can prove in a similar way that $u_\infty^Y$ is also a positive solution of \eqref{inf-eigen}. In Section \ref{sec:nonunique} below, we present a concrete example on metric graphs to show that $u_\infty^Y$ and $u_\infty$ are really different in general. 
\end{rmk}

\subsection{Non-uniqueness under partial incenter constraints}\label{sec:nonunique}

In the Euclidean space an example is built \cite{HSY} in a dumbbell-shaped domain showing that in general there may be multiple linearly independent solutions to \eqref{inf-eigen} and \eqref{dirichlet}. In our general setting, this observation corresponds to the existence of  solutions under partial incenter constraints as described in Remark \ref{rmk partial constraint}.

In a similar manner to \cite{HSY}, for some particular domain $\Omega$ one can obtain at least one more solution to \eqref{inf-eigen} if the condition \eqref{center max} is weakened in the definition of $u_\infty$. We below present, on a metric graph, an analogue of the example in \cite{HSY} for non-uniqueness of solutions. 

\begin{example}\label{ex nonunique}
Let $\X=(\V, \E)$ be a finite graph with $\V=\{O\}\cup \{V_j\}_{j=0, \pm1, \pm2, \pm3}$ and $\E=\{e_j\}_{j=0, \pm1, \pm2, \pm3}$ satisfying
\beq\label{eq example uni1}
e_j=\begin{cases}
[O, V_{j}] &\text{for $j=0, \pm1$,} \\
[V_{+1}, V_j] & \text{for $j=+2, +3$},\\
[V_{-1}, V_j] & \text{for $j=-2, -3$}.
\end{cases}
\eeq
Then, equipped with the intrinsic metric $d$, $(\X, d)$ is clearly a geodesic space. Let $\Omega$ be the interior of $\X$; namely, 
\[
\Omega=\X\setminus \{V_j\}_{j=0, \pm1, \pm2, \pm3}, \quad \partial\Omega=\{V_j\}_{j=0, \pm1, \pm2, \pm3}.
\]
Assume that the length $\ell_j$ of each edge $e_j$ is given by
$
\ell_0=\ell_{\pm1}=\ell_{\pm2}=1, \ell_{\pm 3}=3.
$
We parametrize each $e_j$, using its length,  by $[0, \ell_j]$ with $t=0$ and  $t=\ell_j$ respectively corresponding to the left and right endpoints in the expression \eqref{eq example uni1}. 

One can show that $R_\infty=2$, $\Lambda_\infty=1/2$
and there are two incenters $P_{\pm}$ lying respectively on $e_{\pm3}$ with $d(P_{\pm}, V_{\pm3})=2$. We can also prove that $u_\infty$ given below is a solution of \eqref{inf-eigen}: 
\[
u_\infty(x)=\begin{dcases}
-{1\over 4}t+{1\over 4} & \text{for $x$ on $e_0$,}\\
{1\over 4}t+{1\over 4} & \text{for $x$ on $e_{\pm1}$,}\\
-{1\over 2}t+{1\over 2} & \text{for $x$ on $e_{\pm2}$,}\\
-{1\over 2}|t-1|+1 & \text{for $x$ on $e_{\pm3}$,}
\end{dcases}
\]
where $t$ represents the parameter for $x$ on each $e_j$ according to the parametrization given previously. See Figure \ref{fig:nonuni11} for an illustration of the function graph of $u_\infty$. 
Note that $u_{dist}$ in \eqref{special super} is not a solution and we have $u_\infty< u_{dist}$ on $e_{\pm1}$.

On the other hand, if we take $Y=\{P_+\}$, then we can construct another solution $u_\infty^Y$:
\[
u_\infty^Y(x)=\begin{dcases}
u_\infty(x) & \text{for $x$ on $e_j$ with $j=0, +1, +2, +3$,}\\
-{1\over 8}t+{1\over 4} & \text{for $x$ on $e_{-1}$,}\\
-{1\over 8}t+{1\over 8} & \text{for $x$ on $e_{-2}$,}\\
-{1\over 8}|t-1|+{1\over 4} & \text{for $x$ on $e_{-3}$.}
\end{dcases}
\]
See Figure \ref{fig:nonuni22} for the graph of $u_\infty^Y$. 

\begin{figure}[H]
\centering
\begin{minipage}{.5\textwidth}
  \centering
  \includegraphics[width=0.9\textwidth]{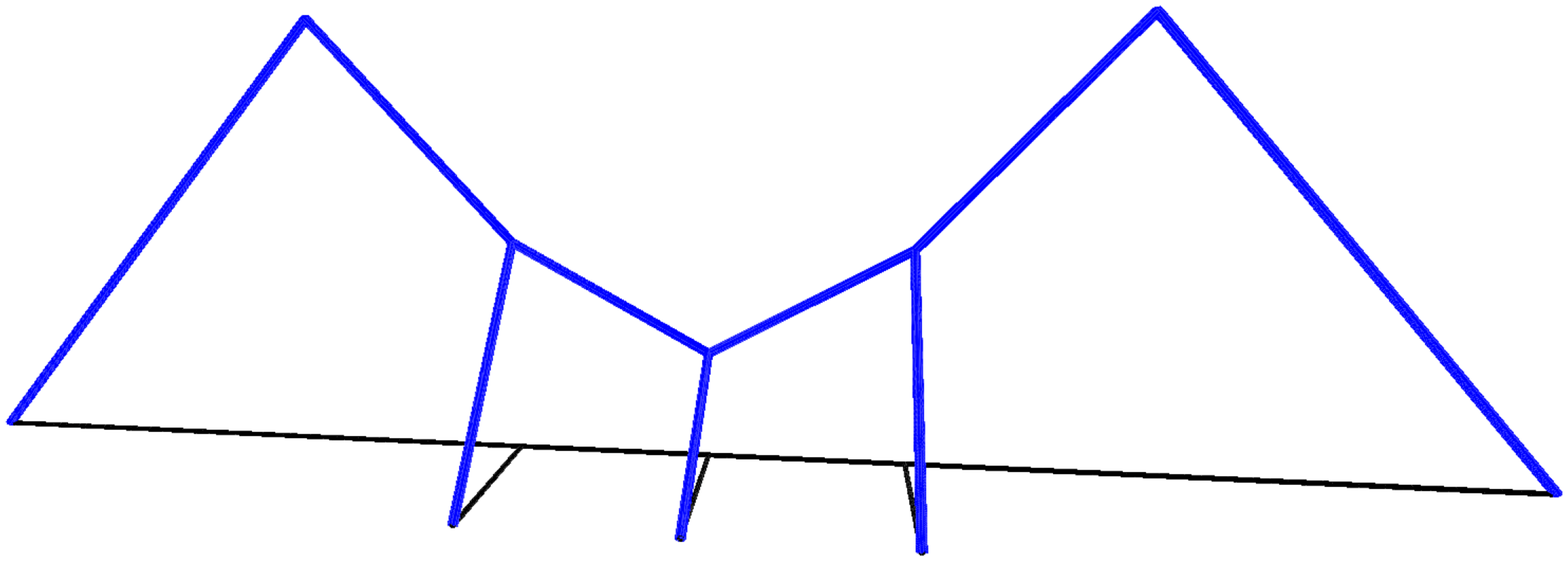}
  \vspace{-0.8cm}
  \caption{Graph of $u_\infty$}
  \label{fig:nonuni11}
\end{minipage}
\begin{minipage}{.5\textwidth}
  \centering
  \includegraphics[width=0.9\textwidth]{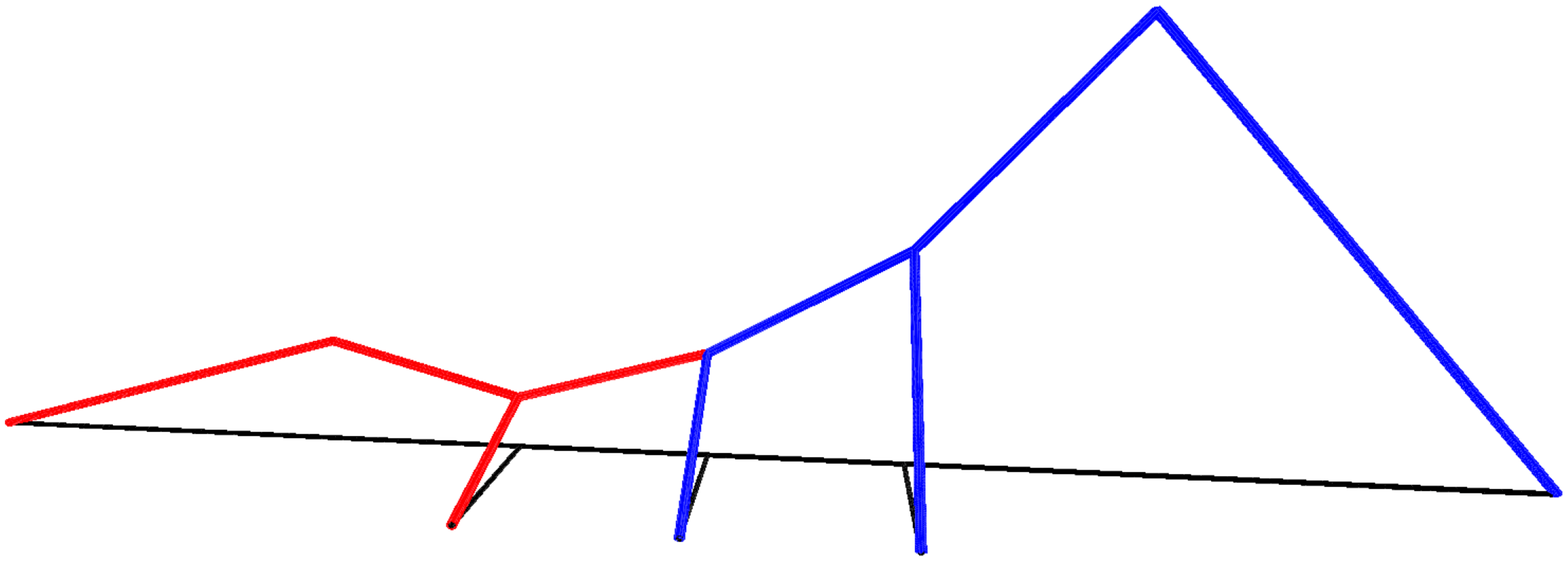}
  \vspace{-0.8cm}
  \caption{Graph of $u_\infty^Y$}
  \label{fig:nonuni22}
\end{minipage}
\end{figure}

Hence, in general we cannot expect uniqueness of solutions of \eqref{inf-eigen} up to a constant multiple. In the Euclidean spaces, the $\infty$-ground states (as the limits of $p$-eigenfunctions) are supposed to be symmetric in space. The function $u_\infty^Y$ above, which is not symmetric, thus corresponds to a non-variational solution on the metric graph. 
\end{example}

\subsection{Principal eigenvalue}

In this section, under an additional assumption on $\Omega$, we show that $\Lambda_\infty$ is indeed the smallest $\infty$-eigenvalue in the sense that any subsolution of \eqref{inf-eigen-lambda} associated to $\lambda<\Lambda_\infty$ is nonpositive in $\Omega$. 

In the Euclidean case, this result is proved by establishing a comparison principal for general subsolutions and supersolutions of \eqref{inf-eigen-lambda} \cite[Theorem 3.1]{JLM1}. 
We are however not able to implement the same machinery in our general setting due to the absence of measure structure, which is needed to invoke the Crandall-Ishii lemma. Instead, our proof consists in a comparison argument for an arbitrary subsolution and a specific distance-based supersolution under the assumption that $\Omega$ can be extended to a ``good'' domain. 
To be more precise, we introduce the following regularity assumption for a bounded domain $\O\subsetneq \X$.  Below let $\N_r(\O)$ denote the $r$-neighborhood of $\O$ in $\X$ for $r>0$; namely, 
\[
\N_r(\O):=\{x\in \X: d(x, \overline{\O})<r\}.
\]
\begin{enumerate}
\item[(A)] For any $x_0\in \O$ and $\vep>0$, there exist $r_0>0$ and a domain $E_{x_0}^{\vep}(\O)\subsetneq \X$ 
such that the following conditions hold:
\beq\label{ext1}
\O\subset E_{x_0}^{\vep}(\O)\subset \N_\vep(\O),
\eeq
\beq\label{ext2}
d(x_0, \partial E_{x_0}^{\vep}(\O))=d(x_0, \partial \O),
\eeq
\beq\label{ext3}
\begin{aligned}
&d(x, \partial E_{x_0}^{\vep}(\O))>d(x, \partial \O)\\ &\qquad\text{if $x\in B_{r_0}(x_0)\setminus \{x_0\}$ satisfies $d(x, \partial\O)= d(x_0, \partial\O)$.}
\end{aligned}
\eeq
\end{enumerate}
The assumption (A) requires certain smoothness of the domain $\O$. It can be better understood in the Euclidean space. Note that the sector region
\[
\O=\{(x_1, x_2)\in \R^2: x_1^2+x_2^2<1 \ \text{and either $x_1<0$ or $x_2<0$ holds}\}
\]
fails to satisfy this assumption; it is easily seen that, at each $x_0=(a, b)$ with $a, b<0$ and $a^2+b^2<1/2$, no matter how the extension $E_{x_0}^\vep(\O)$ is constructed, one cannot obtain \eqref{ext2} and \eqref{ext3} at the same time. On the other hand, we can show that a bounded domain $\O$ fulfills (A) for $\X=\R^n$ provided that it satisfies the exterior sphere condition at each of its boundary points; choosing a closest point $y_0\in \partial \O$ to $x_0$ and taking the exterior sphere tangent at $y_0$, we can extend $\O$ with $\partial E_{x_0}^\vep(\O)$ containing a portion of the exterior sphere near $y_0$.

\begin{thm}[Principal eigenvalue]\label{thm principal}
Suppose that $(\X, d)$ is a proper geodesic space. Let $\Omega\subsetneq \X$ be a bounded domain satisfying $\partial\Omega=\partial \Oba$.  Assume that for any $\delta>0$, there exists $\Omega_\delta\subsetneq\X$ such that $\Oba\subset \Omega_\delta\subset \N_\delta(\Omega)$, $\partial\Omega_\delta=\partial \Oba_\delta$, 
\beq\label{eq:minimality}
\min_{x\in \Oba} d(x, \partial \Omega_\delta)>0
\eeq
 and $\O=\Omega_\delta$ satisfies the regularity assumption (A).
Let $\lambda<\Lambda_\infty$. If $u\in C(\Oba)$ is a subsolution of \eqref{inf-eigen-lambda} and $u\leq 0$ on $\pO$, then $u\leq 0$ in $\Oba$.
\end{thm}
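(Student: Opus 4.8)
The plan is to argue by contradiction: suppose $u$ is a subsolution of \eqref{inf-eigen-lambda} with $\lambda<\Lambda_\infty$, $u\le 0$ on $\pO$, but $\max_{\Oba} u>0$. The strategy is to compare $u$ against a distance-type supersolution built on a slightly enlarged ``good'' domain $\Omega_\delta$ and derive a contradiction at an interior maximum point. Concretely, for small $\delta>0$ take $\Omega_\delta$ as in the hypothesis and set $v_\delta(x)=\mu d(x,\partial\Omega_\delta)$ with a suitable constant $\mu>0$; by Proposition \ref{prop special super} (with $g\equiv 0$), $v_\delta$ is $\infty$-superharmonic on $\Omega_\delta$ with $|\nabla^- v_\delta|=\mu$ there. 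The point of working on $\Omega_\delta$ rather than $\Omega$ is that, by \eqref{eq:minimality}, $d(\cdot,\partial\Omega_\delta)$ is bounded below by a positive constant on $\Oba$, so $v_\delta$ is positive and continuous up to $\pO$; meanwhile $R_\infty(\Omega_\delta)\le R_\infty+\delta$, so $1/R_\infty(\Omega_\delta)\ge \Lambda_\infty/(1+\Lambda_\infty\delta)$, which is still strictly larger than $\lambda$ once $\delta$ is small enough. Choosing $\mu$ strictly between $\lambda\,\max_{\Oba}d(\cdot,\partial\Omega_\delta)$-type bounds and the slope of $v_\delta$, one arranges $|\nabla^- v_\delta|>\lambda v_\delta$ on $\Oba$ together with $v_\delta>0$ on $\pO\supset$ where $u\le 0$.

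Next I would look at $u-\theta v_\delta$ for an appropriate scaling factor $\theta>0$. Since $u$ is continuous on the compact set $\Oba$ and $v_\delta$ is continuous and bounded below on $\Oba$, the function $u-\theta v_\delta$ attains its maximum on $\Oba$; I would pick $\theta$ as small as possible so that this maximum is still positive (this is possible because $\max u>0$ and $v_\delta$ is bounded), and so that the maximum is attained at an interior point $x_0\in\Omega$ — the latter because on $\pO$ we have $u\le 0<\theta v_\delta$, forcing the max to be interior and positive; shifting by adding a suitable strict bump one also makes the max strict locally. Then $v:=\theta v_\delta$ is an $\infty$-superharmonic function on a neighborhood $B_{r_0}(x_0)\subset\Omega$, and $u-v$ has a strict local maximum at $x_0$. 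Applying the definition of subsolution, Definition \ref{def sub}, gives
\[
\lim_{r\to 0+}\inf_{B_r(x_0)}|\nabla^- v|\le \lambda\, u(x_0).
\]
But $|\nabla^- v|\equiv \theta\mu$ is constant, so the left side equals $\theta\mu=|\nabla^- v_\delta|(x_0)\cdot\theta$, while by our choice of $\mu$ we have $\theta\mu>\lambda\,\theta v_\delta(x_0)=\lambda v(x_0)$. Since at the maximum point $u(x_0)=v(x_0)+\bigl(\max(u-v)\bigr)\ge v(x_0)$ is not directly what we want — here I need instead $u(x_0)\le$ something; this is where the argument must be set up carefully, arranging rather that $\lambda u(x_0)<\theta\mu$, contradicting the displayed inequality.

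To make the last comparison come out right, the cleaner route is: normalize so that $\theta=1$ after rescaling $v_\delta$, i.e. directly construct a positive $\infty$-superharmonic $v$ on a neighborhood of $\Oba$ with $|\nabla^- v|=c$ constant and $c>\lambda\sup_{\Oba} v$, and such that $v>0$ everywhere on $\pO$. Then consider $\psi_t := u - t v$ for $t>0$; for $t$ large $\psi_t<0$ on $\Oba$ (as $v$ is bounded below by a positive constant and $u$ is bounded), for $t$ small $\sup\psi_t>0$. Let $t_*=\inf\{t>0:\sup_{\Oba}\psi_t\le 0\}$; then $\sup\psi_{t_*}=0$, attained at some $x_0$, and since $u\le 0<t_*v$ on $\pO$, $x_0\in\Omega$, with $u(x_0)=t_*v(x_0)>0$. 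Perturbing $v$ slightly (replacing $v$ by $v+\eta\,d(x_0,\cdot)^2$-type term, or just noting strict maxima can be forced) makes $x_0$ a strict local max of $u-t_*v$; Definition \ref{def sub} then yields $t_*c=\lim_{r\to0+}\inf_{B_r(x_0)}|\nabla^-(t_*v)|\le \lambda u(x_0)=\lambda t_*v(x_0)\le \lambda t_*\sup_{\Oba}v<t_*c$, a contradiction.

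The technical heart of the argument — and the reason assumption (A) and the enlargement $\Omega_\delta$ are needed — is producing the test supersolution $v$ with constant subslope that is \emph{strictly} $\infty$-superharmonic enough to be a legitimate test function while also strictly dominating $\lambda v$ in the eikonal sense on all of $\Oba$ up to the boundary. The distance function $\Lambda_\infty d(\cdot,\pO)$ itself vanishes on $\pO$ and has subslope exactly matching $\Lambda_\infty u_{dist}$ at the incenter, so it cannot be pushed strictly; enlarging to $\Omega_\delta$ (whose inradius is only slightly bigger, so $1/R_\infty(\Omega_\delta)$ still exceeds $\lambda$) and using $\mu\,d(\cdot,\partial\Omega_\delta)$ with $\mu$ chosen in the open interval $(\lambda\sup_{\Oba}d(\cdot,\partial\Omega_\delta),\,1)$ — nonempty precisely when $\lambda\sup_{\Oba}d(\cdot,\partial\Omega_\delta)<1$, i.e. when $\delta$ is small — resolves exactly this. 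I expect the main obstacle to be the bookkeeping that guarantees simultaneously (i) $v>0$ on $\pO$ (from \eqref{eq:minimality}), (ii) $|\nabla^- v|>\lambda v$ on $\Oba$, (iii) $v$ is $\infty$-superharmonic on a neighborhood of every interior point, and (iv) the perturbation making the maximum strict does not destroy (iii); for (iv) one can use the flexibility of adding a small multiple of a cone $-\eta\,d(x_0,\cdot)$, which keeps $\infty$-superharmonicity by Lemma \ref{lem composition}-type reasoning or directly by the cone comparison definition, while only slightly perturbing the subslope.
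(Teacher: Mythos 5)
Your overall strategy is the same as the paper's: argue by contradiction, enlarge $\Omega$ to $\Omega_\delta$ so that the inradius barely grows and $\lambda R_{\infty,\delta}<1$ survives, build a distance-based $\infty$-superharmonic test function whose subslope strictly dominates $\lambda$ times its value on $\Oba$, touch $u$ from above at an interior point, and contradict Definition \ref{def sub}. The bookkeeping for the eikonal strictness ($\lambda\,d(\cdot,\partial\Omega_\delta)<1$ on $\Oba$ for small $\delta$, using $R_{\infty,\delta}\to R_\infty$) is essentially correct.

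The genuine gap is the step you describe as ``perturbing $v$ slightly \dots makes $x_0$ a strict local max.'' Definition \ref{def sub} requires a \emph{strict} local maximum of $u-v$ with $v$ $\infty$-superharmonic, and neither of your proposed perturbations delivers this. Adding $\eta\,d(x_0,\cdot)^2$ (or any additive term increasing away from $x_0$) does not preserve $\infty$-superharmonicity: the $\infty$-Laplacian is not linear, sums of $\infty$-superharmonic functions are not $\infty$-superharmonic, and already in $\R^1$ the $\infty$-harmonic cone $-|x|$ plus $2|x|$ fails comparison with cones from below. Adding $-\eta\,d(x_0,\cdot)$ goes the wrong way (it makes $u-v$ \emph{larger} away from $x_0$), and Lemma \ref{lem composition} covers only concave $C^2$ compositions $h(v)$, not additive perturbations. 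This strictness issue is exactly where the paper spends its effort, with two devices you omit. First, it touches $u$ by $h_\vep(d(\cdot,\partial\Omega_\delta))$ with $h_\vep(t)=ct^\alpha+\vep(t-d(x_0,\partial\Omega_\delta))^2$, a concave increasing composition (so Lemma \ref{lem composition} applies), whose quadratic correction forces strictness against all competitors $x$ with $d(x,\partial\Omega_\delta)\neq d(x_0,\partial\Omega_\delta)$; the concave power $\alpha<1$ is also what converts $\lambda<\alpha\Lambda_{\infty,\delta}$ into the pointwise strict inequality $|\nabla^- v_\vep|>\lambda u$ near $x_0$. Second, for the remaining ``tangential'' competitors with $d(x,\partial\Omega_\delta)=d(x_0,\partial\Omega_\delta)$, it invokes assumption (A): replacing $\Omega_\delta$ by the extension $E^\vep_{x_0}(\Omega_\delta)$ leaves the distance unchanged at $x_0$ (by \eqref{ext2}) but strictly increases it at those tangential points (by \eqref{ext3}), turning the non-strict touching into a strict one. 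You have misattributed the role of (A) to positivity/domination of $v$ up to $\pO$ — that is already handled by the $\delta$-enlargement and \eqref{eq:minimality} — whereas its actual purpose is precisely the strict-maximum step your argument leaves open.
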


\begin{proof}
Assume by contradiction that there is a subsolution $u$ of \eqref{inf-eigen-lambda} such that $\sup_\Omega u>0$. Multiplying $u$ by an appropriate constant, we may assume that 
$u< 1$ in $\Oba$.

Let $\Omega_\delta$ be the domain described in the assumptions. Set 
\[
R_{\infty, \delta}:=\max_{x\in \overline{\Omega_\delta}} d(x, \pO_\delta),\quad 
\Lambda_{\infty, \delta}:={1\over R_{\infty, \delta}}.
\]
Let us first show that 
\beq\label{eq:minimal0}
R_{\infty, \delta}\to R_\infty\quad \text{as $\delta\to 0$.}
\eeq
Since $\Oba=\bigcap_{\delta>0} \N_\delta(\Omega)$, the assumption $\partial\Oba=\partial \Omega$ implies
\beq\label{eq:minimal1}
\partial\Omega=\partial\left(\bigcap_{\delta>0} \N_\delta(\Omega)\right). 
\eeq
Suppose that $x\in \Omega$ and $y\in \partial \Omega$ satisfy
$d(x, y)=d(x, \partial \Omega)=R_\infty$.
In view of \eqref{eq:minimal1}, we see that for any $\rho>0$, there exists $\delta>0$ small such that $B_\rho(y)\setminus \N_\delta(\Omega)\neq \emptyset$, 
which, by the condition $\Omega_\delta\subset \N_\delta(\Omega)$, yields $B_\rho(y)\setminus \Omega_\delta\neq \emptyset$. 
We thus can find $z\in B_\rho(y)\setminus \Omega_\delta$ such that 
$
d(y, B_\rho(y)\setminus \Omega_\delta)=d(y, z). 
$
Thus, any point, except $z$ itself, on a geodesic joining $y$ and $z$ must belong to $\Omega_\delta$. This shows that $z\in \partial \Omega_\delta$ and therefore 
$d(y, \partial\Omega_\delta)\leq \rho$.
We have shown that
\beq\label{bdry approx}
d(y, \partial\Omega_\delta)\to 0\quad \text{as $\delta\to 0$.}
\eeq
Since
\[
R_{\infty, \delta}\leq d(x, \partial \Omega_\delta)\leq d(x, y)+d(y, \partial\Omega_\delta)=R_\infty+d(y, \partial\Omega_\delta),
\]
by \eqref{bdry approx} we complete the proof of \eqref{eq:minimal0}. 
We therefore can fix $\delta>0$ and $0<\alpha<1$ sufficiently close to $1$ so that
\beq\label{minimality00}
\alpha\Lambda_{\infty, \delta}={\alpha \over R_{\infty, \delta}}>\lambda.
\eeq

In view of \eqref{eq:minimality}, we can next take $L>0$ large such that 
$
u\leq Ld(\cdot, \ \partial \Omega_\delta)^\alpha
$
in $\Oba$.
This implies that there exist $c\in (0, L]$ and $x_0\in \Omega$ such that 
\[
\max_{\Oba} \left(u-c d(\cdot, \partial \Omega_\delta)^\alpha\right)= u(x_0)-c d(x_0, \partial \Omega_\delta)^\alpha=0.
\]
For $\vep>0$, let
\[
h_{\vep}(t)=c t^\alpha+\vep (t-d(x_0, \partial \Omega_\delta))^2, \quad t>0.
\]
We take $\vep>0$ small such that $h_\vep''(d(x_0, \partial \Omega_\delta))<0$.

Let us next consider $E_{x_0}^\vep(\O)$ as in (A) for $\O=\Omega_\delta$ and  denote $\tilde{\Omega}_\vep:=E_{x_0}^\vep(\Omega_\delta).$
Let 
\[
\tilde{R}_{\vep}:=\max_{x\in \overline{\Omega_\vep}} d(x, \partial \tilde{\Omega}_\vep),\quad 
\tilde{\Lambda}_{\vep}:={1\over \tilde{R}_{\vep}}.
\]
Using the same argument in the proof of \eqref{eq:minimal0}, we deduce that $\tilde{R}_\vep\to R_{\infty, \delta}$ as $\vep\to 0$. We thus can adopt \eqref{minimality00} to obtain
\beq\label{eq:minimality2}
\alpha \tilde{\Lambda}_{\vep}>\lambda
\eeq
for any $\vep>0$ small.

Let $r_0>0$ be as in (A). Taking $r\in (0, r_0)$ small, we have
\beq\label{minimality1}
u(x)-h_\vep(d(x, \partial \Omega_\delta))< u(x_0)-h_\vep(d(x_0, \partial \Omega_\delta))
\eeq
for any $x\in B_r(x_0)\setminus\{x_0\}$ that satisfies
\beq\label{minimality2}
d(x, \partial \Omega_\delta)\neq d(x_0, \partial \Omega_\delta). 
\eeq
In addition, when $r>0$ is small, we get, for all $x\in B_r(x_0)$,
\[
h_\vep'(d(x, \partial \Omega_\delta))>0,\quad   h_\vep''(d(x, \partial \Omega_\delta))<0.
\]

Applying \eqref{ext1} and \eqref{ext2} with $\O=\Omega_\delta$, we have 
$d(\cdot, \partial\tilde{\Omega}_\vep)\geq d(\cdot, \partial\Omega_\delta)$ in $\Omega$ and
$d(x_0, \partial\tilde{\Omega}_\vep)= d(x_0, \partial\Omega_\delta)$.
We therefore can use \eqref{minimality1} to obtain
\beq\label{minimality3}
u(x)-h_\vep(d(x, \partial \tilde{\Omega}_\vep))< u(x_0)-h_\vep(d(x_0, \partial \tilde{\Omega}_\vep))=0
\eeq
for all $x\in B_r(x_0)$ satisfying \eqref{minimality2}. Noticing that \eqref{ext3} also holds with $\O=\Omega_\delta$, we see that \eqref{minimality3} actually holds for all $x\in B_r(x_0)\setminus\{x_0\}$. 

Denote $v_\vep=h_{\vep} (d(\cdot, \partial\tilde{\Omega}_\vep))$. By Lemma \ref{lem composition},  $v_\vep$ is $\infty$-superharmonic in $B_r(x_0)$. Moreover, in view of Remark \ref{rmk dist super} and the fact that $|\nabla^- d(\cdot, \partial \tilde{\Omega}_\vep)|=1$ in $B_r(x_0)$, we have 
\[
\begin{aligned}
|\nabla^- v_\vep|(x)&= \alpha c d(x, \partial \tilde{\Omega}_\vep)^{\alpha-1}|\nabla^- d(\cdot, \partial\tilde{\Omega}_\vep)|(x)+2\vep(d(x, \partial \tilde{\Omega}_\vep)-d(x_0, \partial \Omega_\delta))|\nabla^- d(\cdot, \partial\tilde{\Omega}_\vep)|(x)\\
&\geq \alpha c\tilde{\Lambda}_{\vep} d(x, \partial \tilde{\Omega}_\vep)^{\alpha}+2\vep(d(x, \partial \tilde{\Omega}_\vep)-d(x_0, \partial \Omega_\delta))\\
& = \alpha \tilde{\Lambda}_{\vep}v_\vep(x)-\alpha \vep \tilde{\Lambda}_{\vep} (d(x, \partial\tilde{\Omega}_\vep)-d(x_0, \partial \Omega_\delta))^2+2\vep(d(x, \partial \tilde{\Omega}_\vep)-d(x_0, \partial \Omega_\delta))
 \end{aligned}
\]
for any $x\in B_r(x_0)$, which by  \eqref{eq:minimality2} 
 and \eqref{minimality3} yields
\beq\label{minimality4}
\lim_{r\to 0}\inf_{B_r(x_0)}(|\nabla^- v_\vep|-\lambda u)\geq (\alpha \tilde{\Lambda}_{\vep}-\lambda) u(x_0)>0.
\eeq

On the other hand, since $v_\vep$ serves as a test function of $u$ at $x_0\in \Omega$, we apply the definition of subsolutions of \eqref{inf-eigen} to get 
\[
\lim_{r\to 0} \inf_{B_r(x_0)}(|\nabla^- v_\vep|- \lambda u)\leq 0, 
\]
which is a contradiction to \eqref{minimality4}.
\end{proof}

We remark that the condition $\partial \Omega=\partial \Oba$ in the result above is necessary to guarantee \eqref{eq:minimal0} and the continuity of the eigenvalue $\Lambda_\infty$ with respect to the extension $\Omega_\delta$; this condition is also assumed in \cite[Theorem 3.1]{JLM1} in the Euclidean case. In fact, if $\partial\Omega\neq \partial \Oba$, then \eqref{eq:minimal0} fails to hold in general, as shown in the following example. Let
\[
\Omega=\{(x_1, x_2)\in \R^2: x_1^2+x_2^2<1\}\setminus \{(x_1, x_2)\in \R^2:  0\leq x_1<1, \ x_2=0\}.
\]
Then one can easily see that $R_\infty=1/2$ and $R_{\infty, \delta}\geq 1$ for any extension $\Omega_\delta$ of $\Omega$ satisfying \eqref{eq:minimality}. For the same reason, we also assume that $\partial\Omega_\delta=\partial \Oba_\delta$ so that $\tilde{R}_\vep$ approximates $R_{\infty, \delta}$ as $\vep\to 0$.

\section{Consistency with the Euclidean case}\label{sec:eucl}

By Theorem \ref{thm eigenvalue}, we have seen that our $\infty$-eigenvalue is consistent with that in the Euclidean case. 
In this section we further show that our definition of eigenfunctions in geodesic spaces, as given in Definition \ref{def sub}, is also a generalization of the notion proposed in \cite{JLM1} in the Euclidean space. 

Let us recall the definition of viscosity supersolutions of \eqref{inf-eigen-lambda} in the Euclidean space, which is implicitly given in \cite{JLM1} as follows. See \cite{CIL} for definitions of viscosity solutions to general nonlinear elliptic equations. 
Let $\Omega$ be a bounded domain in $\R^n$ and $\lambda>0$. 
\begin{defi}[Definition of Euclidean viscosity solutions]\label{def eucl}
A locally bounded lower semicontinuous function $u: \Omega\to \R$ is called a viscosity supersolution of \eqref{inf-eigen-lambda} if whenever there exist $x_0\in \Omega$ and $\varphi\in C^2(\Omega)$ such that $u-\varphi$ attains a strict local minimum in $\Omega$ at $x_0$, both
\beq\label{vis eikonal super}
|\nabla \varphi(x_0)|\geq \lambda u(x_0)
\eeq
and
\beq\label{vis superhar}
-\Delta_\infty \varphi(x_0)\geq 0
\eeq
hold. A locally bounded upper semicontinuous function $u: \Omega\to \R$ is called a viscosity subsolution of \eqref{inf-eigen-lambda} if whenever there exist $x_0\in \Omega$ and $\varphi\in C^2(\Omega)$ such that $u-\varphi$ attains a strict local maximum in $\Omega$ at $x_0$, either 
\beq\label{vis eikonal sub}
|\nabla \varphi(x_0)|\leq \lambda u(x_0)
\eeq
or 
\[
-\Delta_\infty \varphi(x_0)\leq 0
\]
holds. A function $u\in C(\Omega)$ is called a viscosity solution of \eqref{inf-eigen-lambda} if it is both a viscosity supersolution and a viscosity subsolution. 
\end{defi}

\begin{rmk}\label{rmk viscosity2}
As is well known in the theory of viscosity solutions, one may use the semijets instead of the test functions to define super- and subsolutions. More precisely,  $u\in C(\Omega)$  is a viscosity supersolution (resp., subsolution) if for any $x_0\in \Omega$, we have
\[
\begin{aligned}
&|p|\geq \lambda u(x_0)\quad \text{ and } -\langle Xp, p\rangle \geq 0\\
&\left(\text{resp.,}\quad |p|\leq \lambda u(x_0)\quad \text{ or } -\langle Xp, p\rangle \leq 0\right)
\end{aligned}
\]
for every $(p, X)\in \ol{J}^{2, -}u(x_0)$ (resp., $(p, X)\in \ol{J}^{2, +}u(x_0)$). We refer to \cite{CIL} for a detailed introduction on the semijets $\ol{J}^{2, \pm}u(x_0)\subset \R^n\times \S^{n}$, where 
$\S^{n}$ denotes the set of all $n\times n$ real-valued symmetric matrices. 

The definition above is indeed consistent with the standard framework of viscosity solutions.  For a general fully nonlinear elliptic equation 
\[
F(x, u, \nabla u(x), \nabla^2 u)=0\quad \text{in $\Omega\subset \R^n$}, 
\]
where $F: \Omega\times \R\times \R^n\times \S^n\to \R$ is continuous, we define a viscosity supersolution (resp., subsolution) $u$ by demanding 
\[
F(x, u(x), p, X)\geq 0 \quad \left(\text{resp.,}\quad  F(x, u(x), p, X)\leq 0 \right)
\]
for all $(p, X)\in \ol{J}^{2, -} u(x)$ (resp., $(p, X)\in \ol{J}^{2, +} u(x)$). See again \cite{CIL} for details. 
\end{rmk}

The main result of this section is as follows. 

\begin{thm}[Equivalence of solutions]\label{thm eucl sol}
Let $\Omega\subset \R^n$ be a bounded domain and $\lambda>0$. Let $u$ be a positive locally Lipschitz function in $\Omega$. Then $u$ is a solution of \eqref{inf-eigen-lambda} in the sense of Definition \ref{def super} and Definition \ref{def sub} if and only if $u$ is a viscosity solution of \eqref{inf-eigen-lambda} as defined in Definition \ref{def eucl}.
\end{thm}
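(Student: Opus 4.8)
The plan is to establish the two implications separately, treating the supersolution and subsolution properties in turn, and relying throughout on the characterization of $\infty$-superharmonicity by comparison with cones together with the Euclidean facts from \cite{CEG, ArCrJu, JuSh}. For the supersolution direction, the key is that in $\R^n$ the property of comparison with cones from below is equivalent to being a viscosity supersolution of $-\Delta_\infty u = 0$; this is classical \cite{CEG, ArCrJu}. Hence a function $u$ which is $\infty$-superharmonic in our sense satisfies \eqref{vis superhar} at every point where a $C^2$ function touches it from below. It therefore remains only to translate \eqref{eikonal super}, i.e. $|\nabla^- u|\ge \lambda u$, into the viscosity inequality \eqref{vis eikonal super}. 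Since $u$ is locally Lipschitz, at a point $x_0$ where $u-\varphi$ has a local minimum with $\varphi\in C^2$ one has $|\nabla\varphi(x_0)| \ge |\nabla^- u|(x_0)$ (the subslope of $u$ dominates that of any smooth minorant touching from below, and for smooth $\varphi$ the subslope at a critical configuration equals $|\nabla\varphi(x_0)|$ up to the relevant inequality direction). Combined with $|\nabla^- u|(x_0)\ge \lambda u(x_0)$, this gives \eqref{vis eikonal super}. Conversely, a Euclidean viscosity supersolution is $\infty$-superharmonic by the same cone characterization, and \eqref{vis eikonal super} at all touching points, for a locally Lipschitz $u$, forces $|\nabla^- u|\ge \lambda u$ pointwise — here one uses that at a point of differentiability $|\nabla^- u|(x_0) = |\nabla u(x_0)|$ when $u$ is (say) semiconcave, or more robustly that one can always test from below with an affine-plus-quadratic function realizing a prescribed gradient below the subslope, invoking the results of \cite{JuSh} on metric $\infty$-superharmonic functions being testable.

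For the subsolution direction the two notions look different — ours tests against $\infty$-superharmonic comparison functions $v$ with a strict max of $u-v$, demanding $\liminf_{r\to0}\inf_{B_r(x_0)}|\nabla^- v|\le \lambda u(x_0)$, while Definition \ref{def eucl} tests against $C^2$ functions $\varphi$ and asks for a disjunction. The bridge is Lemma \ref{lem semicon}: an $\infty$-superharmonic $v$ has $|\nabla v|=|\nabla^- v|$ and this quantity is upper semicontinuous, so $\liminf_{r\to0}\inf_{B_r(x_0)}|\nabla^- v|$ is exactly the lower semicontinuous envelope of $|\nabla^- v|$ at $x_0$. For the "viscosity $\Rightarrow$ ours" implication: given an $\infty$-superharmonic $v$ with $u-v$ strict max at $x_0$, one wants to produce a $C^2$ test function for $u$ at $x_0$. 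Here, since $v$ is $\infty$-superharmonic it satisfies $-\Delta_\infty v\ge0$ in the viscosity sense; but $v$ is merely Lipschitz, so one cannot directly plug $v$ in. Instead one uses that $u-v$ has a strict local max, perturbs to a strict max, and appeals to the standard viscosity-solution "max rule"/sup-convolution machinery (or the doubling-of-variables argument as in \cite{ArCrJu, JuSh}) to obtain matching $C^2$ test functions $\varphi$ for $u$ and $\psi$ for $v$ at nearby points with $\nabla\varphi\approx\nabla\psi$ and $-\Delta_\infty\psi\ge0$ there; since $v$ is $\infty$-superharmonic the second alternative $-\Delta_\infty\varphi\le0$ in Definition \ref{def eucl} is incompatible (one shows the relevant matrices cannot both give $-\Delta_\infty\le0$ and $-\Delta_\infty\ge0$ strictly), forcing the eikonal alternative \eqref{vis eikonal sub}, hence $|\nabla^- v|(x_0)\le|\nabla\psi|\le \lambda u(x_0)+o(1)$, and passing to the limit with the upper semicontinuity of $|\nabla v|$ delivers \eqref{def sub eq}. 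For the reverse implication "ours $\Rightarrow$ viscosity": given $\varphi\in C^2$ with $u-\varphi$ a strict local max at $x_0$, if $-\Delta_\infty\varphi(x_0)\le0$ we are done; otherwise $-\Delta_\infty\varphi(x_0)>0$, so $\varphi$ is strictly $\infty$-superharmonic near $x_0$ and in particular $\varphi$ is an admissible $\infty$-superharmonic test function in the sense of Definition \ref{def sub}, whence \eqref{def sub eq} gives $\liminf_{r\to0}\inf_{B_r(x_0)}|\nabla^-\varphi|\le\lambda u(x_0)$; since $\varphi\in C^2$, $|\nabla^-\varphi|$ is continuous and this liminf equals $|\nabla\varphi(x_0)|$, yielding \eqref{vis eikonal sub}.

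The main obstacle I anticipate is the "viscosity $\Rightarrow$ ours" half of the subsolution equivalence: extracting a usable $C^2$ test function from the merely-Lipschitz comparison function $v$ while keeping track of the gradient so as to land on the eikonal alternative. This is exactly where one must combine the regularity Lemma \ref{lem semicon} (to know $|\nabla^- v|$ is u.s.c. and equals $|\nabla v|$, so that infima over shrinking balls behave well) with the Euclidean theory of $\infty$-superharmonic functions from \cite{CEG, ArCrJu, JuSh} (to know $v$ is genuinely a viscosity supersolution of the $\infty$-Laplacian and can be handled by doubling of variables). A clean way to organize this is: first reduce to the case where the strict max of $u-v$ persists under small perturbations; second, double variables and use the fact that a strict $\infty$-superharmonic inequality at the doubled point is impossible if the limiting matrix were $\le 0$ on the relevant gradient direction; third, conclude the eikonal inequality with an error $o(1)$ and pass to the limit using u.s.c. of $|\nabla v|$. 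The remaining pieces — the cone/viscosity equivalence for $-\Delta_\infty$, and the elementary slope comparisons for $C^2$ test functions — are standard and I would cite \cite{CEG, ArCrJu, JuSh, LShZ} rather than reprove them.
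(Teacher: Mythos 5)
Your overall architecture is the same as the paper's: split into supersolution and subsolution equivalences, invoke the cone/viscosity characterization of $\infty$-superharmonicity from \cite{CEG, ArCrJu, JuSh} (Lemma \ref{lem eucl cone}), and attack the two hard directions by doubling of variables and the Crandall--Ishii lemma. The easy halves are handled exactly as in the paper. But there is a genuine gap in the direction ``viscosity subsolution $\Rightarrow$ metric subsolution''. You double variables against the $\infty$-superharmonic comparison function $v$, obtain jets $(p,X)\in\ol J^{2,+}u(x_\vep)$ and $(q,Y)\in\ol J^{2,-}v(y_\vep)$ with $p=q$ and $X\le Y$, and then claim that ``a strict $\infty$-superharmonic inequality at the doubled point'' rules out the alternative $-\la Xp,p\ra\le 0$. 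However, $v$ is only assumed $\infty$-superharmonic, which gives the \emph{non-strict} inequality $-\la Yq,q\ra\ge 0$ and hence only $-\la Xp,p\ra\ge 0$; this is perfectly compatible with $-\la Xp,p\ra\le 0$ (both may vanish), so the disjunction in Definition \ref{def eucl} is not forced onto the eikonal alternative. You never explain where the strictness comes from, and this is precisely the crux. The paper manufactures it by replacing the test function $\varphi$ with $\varphi_h=h(\varphi)$, $h(t)=(At^{1/\alpha}+1-A)^\alpha$ with $A>1$ and $\alpha<1$ close to $1$: since $h'>1$ and $h''<0$ on the relevant range, $\varphi_h$ satisfies $-\Delta_\infty\varphi_h>0$ in the viscosity sense while both the strict maximum of $u-\varphi_h$ and the eikonal lower bound $|\nabla^-\varphi_h|=h'(\varphi)\,|\nabla^-\varphi|>|\nabla^-\varphi|\ge\lambda u+\sigma$ (needed for the contradiction) are preserved. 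This composition step (cf.\ Lemma \ref{lem composition}) is the missing idea; without it the argument as written does not close.

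A secondary, smaller issue is the supersolution direction ``viscosity $\Rightarrow$ metric''. Reducing to points of differentiability or to semiconcavity does not suffice, because $|\nabla^- u|\ge\lambda u$ must be verified at \emph{every} point of $\Omega$, and at a given $x_0$ there may be no $C^2$ function touching $u$ from below. The paper's remedy is the precise form of the ``more robust'' argument you gesture at: it fixes $\sigma>0$, notes that $\phi_\sigma(x)=u(x_0)-(|\nabla^- u|(x_0)+\sigma)\,|x-x_0|$ satisfies $\phi_\sigma-u\le 0$ with a strict maximum at $x_0$ by the cone comparison, and doubles variables in $\phi_\sigma(x)-u(y)-|x-y|^2/\vep$; the eikonal part of Definition \ref{def eucl} applied at $y_\vep$ gives $2|x_\vep-y_\vep|/\vep\ge\lambda u(y_\vep)$ while the maximality in $x$ gives $2|x_\vep-y_\vep|/\vep\le|\nabla^- u|(x_0)+\sigma$, and letting $\vep\to0$, $\sigma\to0$ concludes. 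You should carry out this step explicitly rather than leave it at the level of an allusion.
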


Before starting to prove this theorem, we first present the following equivalence result on $\infty$-superharmonic functions, which is essentially obtained in \cite{ArCrJu}.

\begin{lem}[Equivalence of $\infty$-superharmonic functions]\label{lem eucl cone}
Let $\Omega\subset \R^n$ be a bounded domain. Then $u$ is $\infty$-superharmonic in $\Omega\subset \R^n$ in the sense of Definition \ref{def inf-super} if and only if $u$ is a viscosity supersolution of \eqref{inf-lap}, that is, whenever there exist $x_0\in \Omega$ and $\varphi\in C^2(\Omega)$ such that $u-\phi$ attains a strict local minimum at $x_0$, the inequality \eqref{vis superhar} holds. 
\end{lem}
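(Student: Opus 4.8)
The plan is to prove the two implications separately; both are variants of the classical equivalence between comparison with cones and infinity‑harmonicity in $\R^n$ established in \cite{CEG} (see also \cite{ArCrJu}). For the implication ``comparison with cones from below $\Rightarrow$ viscosity supersolution'', let $\varphi\in C^2(\Omega)$ be such that $u-\varphi$ has a strict local minimum at $x_0\in\Omega$; after subtracting a constant from $\varphi$ (which changes neither $\nabla\varphi$ nor $\Delta_\infty\varphi$) we may assume $\varphi(x_0)=u(x_0)$. We must show $\Delta_\infty\varphi(x_0)\le0$, so suppose instead $\Delta_\infty\varphi(x_0)>0$; then $p:=\nabla\varphi(x_0)\ne0$ and $\langle\nabla^2\varphi(x_0)p,p\rangle>0$. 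Put $\nu=p/|p|$, and for $R>0$ to be fixed let $\hat{x}=x_0+R\nu$ and
\[
\phi(x)=\bigl(u(x_0)+|p|R\bigr)-|p|\,|x-\hat{x}|,\qquad x\in\Omega,
\]
which is a cone of the form \eqref{cone below} with $\kappa=-|p|\le0$ and $\phi(x_0)=u(x_0)$. Combining the second order Taylor expansion of $\varphi$ at $x_0$ with $|x-\hat{x}|-R=-\langle x-x_0,\nu\rangle+\tfrac{1}{2R}(|x-x_0|^2-\langle x-x_0,\nu\rangle^2)+o(|x-x_0|^2)$, one obtains, on $\partial B_r(x_0)$,
\[
\varphi(x)-\phi(x)=\tfrac12\langle\nabla^2\varphi(x_0)(x-x_0),x-x_0\rangle+\tfrac{|p|}{2R}\bigl(r^2-\langle x-x_0,\nu\rangle^2\bigr)+o(r^2).
\]

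Since $\langle\nabla^2\varphi(x_0)\nu,\nu\rangle>0$, one can fix $R>0$ small — depending only on $|p|$ and $\nabla^2\varphi(x_0)$, and with $R<d(x_0,\partial\Omega)$ — so that the map $\omega\mapsto\tfrac12\langle\nabla^2\varphi(x_0)\omega,\omega\rangle+\tfrac{|p|}{2R}(1-\langle\omega,\nu\rangle^2)$ is bounded below by some $c>0$ on the unit sphere (treat separately $\omega$ near $\pm\nu$, where $\langle\nabla^2\varphi(x_0)\omega,\omega\rangle$ is close to the positive number $\langle\nabla^2\varphi(x_0)\nu,\nu\rangle$, and $\omega$ bounded away from $\pm\nu$, where $1-\langle\omega,\nu\rangle^2$ is bounded below). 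Hence $\varphi-\phi\ge\tfrac{c}{2}r^2$ on $\partial B_r(x_0)$ for all small $r>0$, and since $u\ge\varphi$ near $x_0$ we get $u\ge\phi+\eta$ on $\partial B_r(x_0)$ for any $\eta\in(0,\tfrac{c}{2}r^2)$, while $(\phi+\eta)(x_0)=u(x_0)+\eta>u(x_0)$. As $\phi+\eta$ is again a cone as in \eqref{cone below} with apex $\hat{x}\notin B_r(x_0)$, comparison with cones from below on $\O=B_r(x_0)$ forces $u\ge\phi+\eta$ in $B_r(x_0)$, hence $u(x_0)\ge u(x_0)+\eta$, a contradiction. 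This settles the first implication.

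The reverse implication is the substantial one. Given $\O\subset\subset\Omega$, an apex $\hat{x}\in\Omega\setminus\O$, and a cone $\phi$ as in \eqref{cone below} with $\kappa\le0$ and $u\ge\phi$ on $\partial\O$, we must deduce $u\ge\phi$ in $\overline{\O}$. The point is that $\phi$ is smooth in $\O$ (its only singularity $\hat{x}$ lies outside $\O$) with $\Delta_\infty\phi\equiv0$ there, hence $\phi$ is a viscosity subsolution of \eqref{inf-lap} in $\O$; comparing the supersolution $u$ against the subsolution $\phi$ then gives the conclusion. This comparison against cone test functions is exactly the classical characterization of $\infty$-superharmonic functions in Euclidean space, and I would invoke \cite{ArCrJu} (equivalently \cite{CEG}) for it rather than reprove it. The main obstacle is precisely here: unlike the first implication, which is a one-point cone construction, the reverse implication rests on the genuine Euclidean infinity-Laplacian theory — the comparison principle for $-\Delta_\infty u=0$, or equivalently the monotonicity of $r\mapsto(u(x_0)-\min_{\overline{B_r(x_0)}}u)/r$ — which we take as known from the literature.
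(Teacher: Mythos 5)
Your proposal is correct, and it diverges from the paper's proof in a way worth noting. The paper disposes of the whole lemma by citation: it invokes \cite[Theorem 3.1]{CEG} and \cite[Theorem 4.13]{ArCrJu} for the equivalence of viscosity supersolutions of \eqref{inf-lap} with comparison with cones from below for the \emph{full} cone class ($\kappa\in\R$), and then cites the remarks after Definition 2.3 in \cite{JuSh} to reconcile that class with the restricted one of Definition \ref{def inf-super} ($\kappa\le 0$, apex outside $\O$). You instead prove the delicate direction --- restricted cone comparison $\Rightarrow$ viscosity supersolution --- directly, by an explicit one-point construction: given $\Delta_\infty\varphi(x_0)>0$ you build a cone with slope $\kappa=-|\nabla\varphi(x_0)|\le 0$ and apex $x_0+R\nu$ outside the test ball, and your Taylor computation together with the lower bound for $\omega\mapsto\tfrac12\langle\nabla^2\varphi(x_0)\omega,\omega\rangle+\tfrac{|p|}{2R}(1-\langle\omega,\nu\rangle^2)$ on the unit sphere is sound (the apex lies in $\Omega\setminus B_r(x_0)$ since $r\le R<d(x_0,\partial\Omega)$, so the cone is admissible for Definition \ref{def inf-super}). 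This makes the only direction in which the restricted cone class could conceivably be too weak entirely self-contained, and removes the dependence on the \cite{JuSh} cone-class equivalence. For the converse you fall back on the classical Euclidean theory exactly as the paper does, which is legitimate since the restricted comparison property is a formal consequence of the full one established in \cite{CEG, ArCrJu}. The trade-off is that the paper's proof is two sentences long, while yours requires the quantitative sphere estimate; what you buy is a proof that stands on its own for the harder implication.
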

\begin{proof}
In \cite[Theorem 3.1]{CEG} and \cite[Theorem 4.13]{ArCrJu}, it is proved that $u$ is a viscosity supersolution of \eqref{inf-lap} if and only if $u$ satisfies the property of comparison with cones from below. Note that in these results each test cone $\phi$ in an open subset $\O$ 
is in the form of \eqref{cone below} with $a, \kappa\in \R$ and $\hat{x}\in \Omega\setminus \O$. 

In order to complete the proof, it suffices to show that in the Euclidean space this more restrictive condition  (with $\kappa\in \R$) of cone comparison is equivalent to the version with $\kappa\geq 0$ as stated in Definition \ref{def inf-super}. This can be found in \cite{JuSh}; see the remarks after Definition 2.3 in \cite{JuSh}, where a sufficient condition is given for the equivalence of both types of comparison with cones in more general metric spaces. 
\end{proof}

We first show the equivalence between the notions of supersolutions. 

\begin{prop}[Equivalence of supersolutions]\label{prop eucl super}
Let $\Omega\subset \R^n$ be a bounded domain and $\lambda>0$. Let $u$ be a positive continuous function in $\Omega$.  Then $u$ is a supersolution of \eqref{inf-eigen-lambda} in the sense of Definition \ref{def super} if and only if $u$ is a viscosity supersolution of \eqref{inf-eigen-lambda} as defined in Definition \ref{def eucl}.
\end{prop}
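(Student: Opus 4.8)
\textbf{Proof proposal for Proposition \ref{prop eucl super}.}

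The plan is to prove the two implications separately, treating the eikonal part and the $\infty$-Laplacian part of the obstacle problem independently in each direction. Recall that by Definition \ref{def super}, $u$ is a supersolution of \eqref{inf-eigen-lambda} precisely when $u$ is $\infty$-superharmonic in $\Omega$ (comparison with cones from below) and $|\nabla^- u|\geq \lambda u$ holds pointwise in $\Omega$; by Definition \ref{def eucl}, $u$ is a viscosity supersolution when, at every point $x_0$ where $u-\varphi$ has a strict local minimum for $\varphi\in C^2(\Omega)$, both \eqref{vis eikonal super} and \eqref{vis superhar} hold. The key observation is that these two component properties are \emph{simultaneously} required in both definitions (this is the feature of the obstacle-problem formulation: the minimum of the two operators being $\geq 0$ means each is $\geq 0$), so there is no interaction between the eikonal and $\infty$-Laplacian pieces, and one may match them up one at a time.

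For the $\infty$-superharmonicity component, I would simply invoke Lemma \ref{lem eucl cone}: $u$ obeys comparison with cones from below in the sense of Definition \ref{def inf-super} if and only if $u$ is a viscosity supersolution of $-\Delta_\infty u=0$, i.e.\ \eqref{vis superhar} holds at every strict local minimum of $u-\varphi$. So this part of each definition transfers verbatim. For the eikonal component, the task is to show that the pointwise inequality $|\nabla^- u|(x_0)\geq \lambda u(x_0)$ for all $x_0\in\Omega$ is equivalent to the viscosity inequality $|\nabla\varphi(x_0)|\geq \lambda u(x_0)$ at every strict local minimum point of $u-\varphi$ with $\varphi\in C^2$. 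The direction ``Monge $\Rightarrow$ viscosity'' is the easy one: if $u-\varphi$ has a local minimum at $x_0$, then for $y$ near $x_0$ one has $u(x_0)-u(y)\leq \varphi(x_0)-\varphi(y)$, hence $[u(x_0)-u(y)]_+\leq [\varphi(x_0)-\varphi(y)]_+$ and dividing by $d(x_0,y)=|x_0-y|$ and taking $\limsup_{y\to x_0}$ gives $|\nabla^- u|(x_0)\leq |\nabla^-\varphi|(x_0)\leq |\nabla\varphi(x_0)|$; combined with $|\nabla^- u|(x_0)\geq\lambda u(x_0)$ this yields \eqref{vis eikonal super}. This is exactly the type of elementary test-function comparison already used in \eqref{eq dist super5} in the proof of Proposition \ref{prop special super}, so I would reuse that argument. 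For the converse ``viscosity $\Rightarrow$ Monge'', I would appeal to the equivalence between Monge solutions and (metric) viscosity solutions of the eikonal equation recorded in Appendix \ref{sec:app} (following \cite{LShZ}), specialized to $\X=\R^n$ where metric viscosity coincides with the standard Euclidean viscosity notion; after the logarithmic change of variables $U=\log u$ (legitimate since $u>0$ and locally Lipschitz, hence $U$ is too) the eikonal inequality $|\nabla^- u|\geq\lambda u$ becomes $|\nabla^- U|\geq\lambda$, and the viscosity supersolution property of $u$ for $|\nabla u|-\lambda u=0$ translates to the viscosity supersolution property of $U$ for $|\nabla U|=\lambda$, which by the cited equivalence gives the Monge (subslope) inequality back.

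The main obstacle I anticipate is the converse eikonal implication, specifically making sure the change of variables $U=\log u$ respects the viscosity framework on both sides and that the relevant equivalence theorem from the appendix applies at the level of supersolutions (not just full solutions). If one prefers a self-contained argument avoiding the appendix machinery, the alternative is: given $x_0\in\Omega$, to produce a $C^2$ test function touching $u$ from below at $x_0$ with controlled gradient norm. Since $u$ is only locally Lipschitz one cannot expect $u-\varphi$ to have a strict minimum for an arbitrary smooth $\varphi$, so one would argue by contradiction—assume $|\nabla^- u|(x_0)<\lambda u(x_0)$, which by the definition of the subslope means $u(y)\geq u(x_0)-(\lambda u(x_0)-\eta)|x_0-y|+o(|x_0-y|)$ near $x_0$ for some $\eta>0$—and then build a smooth strict supporting paraboloid-type function $\varphi$ with $|\nabla\varphi(x_0)|<\lambda u(x_0)$ lying below $u$ near $x_0$ and touching at $x_0$, contradicting \eqref{vis eikonal super}; the construction is standard but requires a little care with the $o(|x_0-y|)$ term, which is where a small regularization/sup-convolution or an explicit cone-minus-quadratic barrier would be used. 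Either route closes the eikonal equivalence, and combining it with Lemma \ref{lem eucl cone} for the $\infty$-Laplacian component completes the proof.
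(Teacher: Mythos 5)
Your forward implication and your treatment of the $\infty$-Laplacian component both match the paper: the two inequalities in Definition \ref{def eucl} are required simultaneously at each test point, so the problem does decouple for supersolutions, and Lemma \ref{lem eucl cone} disposes of the comparison-with-cones part in both directions exactly as you say (with Lemma \ref{lem local lip} supplying the local Lipschitz continuity needed for Definition \ref{def super}). The gap is in the converse eikonal implication, and neither of your two routes closes it as written.

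Route 2, your ``self-contained'' contradiction argument, fails at its central step. From $|\nabla^- u|(x_0)=k<\lambda u(x_0)$ you only obtain the one-sided cone bound $u(y)\geq u(x_0)-(k+\eta)|y-x_0|$ near $x_0$. Since $k+\eta>0$, this downward cone has empty subdifferential at its apex, so there need not exist \emph{any} $C^2$ function touching $u$ from below at $x_0$ --- take $u(y)=A-k|y-x_0|$ itself as the model case --- and hence \eqref{vis eikonal super} simply cannot be invoked at $x_0$. The difficulty is not the $o(|y-x_0|)$ term; it is that the test point must be moved off $x_0$. That is exactly what the paper's proof does: it sets $\phi_\sigma(x)=u(x_0)-(|\nabla^- u|(x_0)+\sigma)|x-x_0|$, notes that $\phi_\sigma-u$ attains a strict local maximum at $x_0$, and doubles variables via $\Phi_\vep(x,y)=\phi_\sigma(x)-u(y)-|x-y|^2/\vep$. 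The paraboloid $y\mapsto \phi_\sigma(x_\vep)-|x_\vep-y|^2/\vep$ is a genuine $C^2$ test function touching $u$ from below at a nearby point $y_\vep\to x_0$, where \eqref{vis eikonal super} yields $2|x_\vep-y_\vep|/\vep\geq\lambda u(y_\vep)$, while maximality in $x$ bounds $2|x_\vep-y_\vep|/\vep$ above by the Lipschitz constant $|\nabla^- u|(x_0)+\sigma$ of the cone; sending $\vep\to 0$ and then $\sigma\to 0$ gives $|\nabla^- u|(x_0)\geq\lambda u(x_0)$. Your mention of sup-convolution is in the right spirit but deployed for the wrong purpose. Route 1 does not rescue you either: Appendix \ref{sec:app} records only the McShane--Whitney construction and a comparison principle for Monge solutions, not the Monge/viscosity equivalence you invoke; that equivalence is proved in \cite{LShZ} for \emph{metric} viscosity notions, and reducing those to the Euclidean Definition \ref{def eucl} is essentially the content of the proposition you are trying to establish. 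The relocation of the test point via doubling of variables is the missing idea.
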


\begin{proof}
Let us first show ``$\Rightarrow$''. Suppose that $u$ is locally Lipschitz and $\infty$-superharmonic, and \eqref{eikonal super} holds in $\Omega$. Assume that $u-\varphi$ attains a strict local minimum at $x_0\in \Omega$ for some $\varphi\in C^2(\Omega)$.  By Lemma \ref{lem eucl cone}, we have \eqref{vis superhar}. Also, it is not difficult to see that 
\[
|\nabla\varphi|(x_0)|= |\nabla^-\varphi |(x_0)\geq |\nabla^- u|(x_0),
\]
which, together with \eqref{eikonal super}, immediately yields \eqref{vis eikonal super}. Hence, we conclude that $u$ is a viscosity supersolution of \eqref{inf-eigen-lambda}.

We next prove ``$\Leftarrow$''. Suppose that $u$ is viscosity supersolution of \eqref{inf-eigen-lambda}. Applying Lemma \ref{lem eucl cone} again, we deduce that $u$ satisfies the property of comparison with cones in $\Omega$. In particular, $u$ is locally Lipschitz by Lemma \ref{lem local lip}. It remains to show \eqref{eikonal super} in $\Omega$.  

Fix $x_0\in \Omega$ arbitrarily. Thanks to the $\infty$-superharmonicity of $u$, for any $\sigma>0$ small, we can find $r>0$ small such that 
\[
u(x)> u(x_0)-(|\nabla^- u|(x_0)+\sigma) d(x_0, x)
\]
for all $x\in \ol{B_r(x_0)}\setminus \{x_0\}$. It amounts to saying that $\phi_\sigma-u$ attains a strict maximum in $\ol{B_r(x_0)}$ at $x_0$, where 
\[
\phi_\sigma(x):=u(x_0)-(|\nabla^- u|(x_0)+\sigma) d(x_0, x), \quad x\in \ol{B_r(x_0)}. 
\]
Let us consider 
\[
\Phi_\vep(x, y)=\phi_\sigma(x)-u(y)-{|x-y|^2\over \vep}
\]
for $\vep>0$ and $x, y\in \ol{B_r(x_0)}$. Suppose that $(x_\vep, y_\vep)$ is a maximizer of $\Phi_\vep$ in $\ol{B_r(x_0)}\times  \ol{B_r(x_0)}$. By a standard argument of viscosity solutions, we have $x_\vep, y_\vep\to x_0$ as $\vep\to 0$. In particular, we get $x_\vep, y_\vep\in B_r(x_0)$ when $\vep>0$ is taken small.

Since 
\[
y\mapsto u(y)+{|x_\vep-y|^2\over \vep}-\phi_\sigma(x_\vep)
\]
attains a local minimum at $y=y_\vep$, we can apply the supersolution part of Definition \ref{def eucl} to deduce that
\beq\label{eucl super1}
 {2| x_\vep-y_\vep|\over \vep}\geq \lambda u(y_\vep).
\eeq
On the other hand, the maximality at $x=x_\vep$ of 
\[
x\mapsto \phi_\sigma(x)-u(y_\vep)-{|x-y_\vep|^2\over \vep}
\]
yields
\beq\label{eucl super2}
 {2|x_\vep-y_\vep|\over \vep}\leq |\nabla^- \phi_\sigma|(x_\vep)=|\nabla^- u|(x_0)+\sigma.
\eeq
Combining \eqref{eucl super1} and \eqref{eucl super2}, we are led to 
\[
|\nabla^- u|(x_0)+\sigma\geq \lambda u(y_\vep).
\]
Our proof is thus complete if we send $\vep\to 0$ and then $\sigma \to 0$ in the relation above. 
\end{proof}

We next show the equivalence for subsolutions, 
using a comparison-type argument.

\begin{prop}[Equivalence of subsolutions]\label{prop eucl sub}
Let $\Omega\subset \R^n$ be a bounded domain and $\lambda>0$. Let $u$ be a positive locally Lipschitz function in $\Omega$.  Then $u$ is a subsolution of \eqref{inf-eigen-lambda} in the sense of Definition \ref{def sub} if and only if $u$ is a viscosity subsolution of \eqref{inf-eigen-lambda} as defined in Definition \ref{def eucl}.
\end{prop}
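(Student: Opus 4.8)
The plan is to prove the two implications separately, with the forward direction (Definition \ref{def sub} $\Rightarrow$ Definition \ref{def eucl}) being the more delicate one. For the \emph{easy} direction, suppose $u$ is a viscosity subsolution in the sense of Definition \ref{def eucl}, and suppose there exist $x_0\in\Omega$, $r_0>0$ and an $\infty$-superharmonic function $v$ on $B_{r_0}(x_0)$ such that $u-v$ has a strict local maximum at $x_0$. I want to deduce \eqref{def sub eq}. The obstruction is that $v$ is merely $\infty$-superharmonic, not $C^2$, so I cannot test $u$ directly with $v$. The idea is to apply a doubling-of-variables / sup-convolution argument as in the proof of Proposition \ref{prop eucl super}: regularize $v$ from below, or more precisely consider $\Phi_\vep(x,y)=u(x)-v(y)-|x-y|^2/\vep$ and use that $v$ itself is a viscosity supersolution of \eqref{inf-lap} (by Lemma \ref{lem eucl cone}). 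At the maximizer $(x_\vep,y_\vep)$, the quadratic $y\mapsto v(y)+|x_\vep-y|^2/\vep$ touches from below at $y_\vep$; since $v$ satisfies comparison with cones, Lemma \ref{lem circular} gives $|\nabla^- v|(y_\vep)\le 2|x_\vep-y_\vep|/\vep$ (up to the lower semicontinuous envelope), while the maximality in $x$ against the $C^2$ bump $x\mapsto u$ is not available—so instead I use that $u$ is a viscosity subsolution: either $2|x_\vep-y_\vep|/\vep\le\lambda u(x_\vep)$ or $-\Delta_\infty$ of the quadratic at $x_\vep$ is $\le 0$. The second alternative is ruled out because a quadratic of the form $|x-y_\vep|^2/\vep$ has $\Delta_\infty>0$ wherever the gradient is nonzero, and the gradient is nonzero for small $\vep$ since $|\nabla^- v|(x_0)>0$ is needed for \eqref{def sub eq} to be nontrivial (if $|\nabla^-v|(x_0)=0$ there is nothing to prove after taking the liminf). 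Passing $\vep\to 0$ and using upper semicontinuity of $|\nabla v|=|\nabla^- v|$ from Lemma \ref{lem semicon} yields \eqref{def sub eq}.

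For the \emph{hard} direction, suppose $u$ is a subsolution in the sense of Definition \ref{def sub}, and suppose $u-\varphi$ has a strict local maximum at $x_0$ for some $\varphi\in C^2(\Omega)$ with $-\Delta_\infty\varphi(x_0)>0$; I must derive \eqref{vis eikonal sub}, i.e.\ $|\nabla\varphi(x_0)|\le\lambda u(x_0)$. The natural strategy is: if $\nabla\varphi(x_0)=0$ the inequality is trivial since $u>0$, so assume $|\nabla\varphi(x_0)|=:\kappa>0$. Then near $x_0$, $\varphi$ is a $C^2$ function with nonvanishing gradient and $\Delta_\infty\varphi<0$, so $\varphi$ is $\infty$-superharmonic in a small ball $B_{r_0}(x_0)$ (this is the classical fact that a smooth function with $-\Delta_\infty>0$ satisfies comparison with cones from below—it follows from Lemma \ref{lem eucl cone}). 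Thus $\varphi$ is an admissible test function $v$ in Definition \ref{def sub}, and since $u-\varphi$ has a strict local max at $x_0$, Definition \ref{def sub} gives
\[
\lim_{r\to 0+}\inf_{B_r(x_0)}|\nabla^-\varphi|\le\lambda u(x_0).
\]
Because $\varphi\in C^2$ with $\nabla\varphi(x_0)\ne 0$, the subslope $|\nabla^-\varphi|$ is continuous near $x_0$ and equals $|\nabla\varphi|$ there, so the left-hand side is exactly $|\nabla\varphi(x_0)|$, yielding \eqref{vis eikonal sub}.

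The main obstacle is the verification in the hard direction that a $C^2$ function $\varphi$ with $-\Delta_\infty\varphi(x_0)>0$ and $\nabla\varphi(x_0)\ne 0$ is genuinely $\infty$-superharmonic (in the Definition \ref{def inf-super} sense) on a small ball, so that it is licit as a test function—this requires passing from the pointwise inequality at $x_0$ to the inequality on a neighborhood (straightforward by continuity of $\Delta_\infty\varphi$ and $\nabla\varphi$) and then invoking the equivalence in Lemma \ref{lem eucl cone}. A secondary subtlety, in the easy direction, is handling the lower semicontinuous envelope in \eqref{def sub eq}: one must be careful that the doubling argument controls $|\nabla^- v|$ not just at points $y_\vep$ but in a way that survives the $\liminf_{r\to 0}\inf_{B_r(x_0)}$; this is where the upper semicontinuity of $|\nabla^- v|$ from Lemma \ref{lem semicon} is essential, since it forces $\inf_{B_r(x_0)}|\nabla^- v|$ to be comparable to $|\nabla^- v|(x_0)$ only after one has already identified the relevant bound along the approximating sequence. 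I would organize the write-up so that the easy direction reuses verbatim the machinery of the proof of Proposition \ref{prop eucl super}, and the hard direction is a short paragraph once the $\infty$-superharmonicity of smooth strict supersolutions is recorded.
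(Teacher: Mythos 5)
Your direction ``metric subsolution $\Rightarrow$ viscosity subsolution'' (which you call the hard one, though it is the short one in the paper) is correct and is exactly the paper's argument: for a $C^2$ test function $\varphi$ with $-\Delta_\infty\varphi(x_0)>0$, continuity gives $-\Delta_\infty\varphi>0$ on a small ball, Lemma \ref{lem eucl cone} makes $\varphi$ an admissible $\infty$-superharmonic test function for Definition \ref{def sub}, and continuity of $|\nabla^-\varphi|=|\nabla\varphi|$ turns \eqref{def sub eq} into \eqref{vis eikonal sub}.

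The other direction has a genuine gap, and it is a sign error at the decisive step. In your doubling argument you test the viscosity subsolution $u$ at $x_\vep$ with $\psi(x)=v(y_\vep)+|x-y_\vep|^2/\vep+\mathrm{const}$ and claim the alternative ``$-\Delta_\infty\psi(x_\vep)\le 0$'' is \emph{ruled out} because the quadratic has $\Delta_\infty\psi>0$ where its gradient is nonzero. But $\Delta_\infty\psi(x_\vep)=8|x_\vep-y_\vep|^2/\vep^3\ge 0$ means precisely that $-\Delta_\infty\psi(x_\vep)\le 0$, so the second alternative of Definition \ref{def eucl} is \emph{always satisfied} by this test function and the disjunction yields no information about $|\nabla\psi(x_\vep)|$. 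To force the eikonal alternative you would need the test function to be a \emph{strict} supersolution of $-\Delta_\infty=0$ at $x_\vep$, and a convex paraboloid is the opposite. This is exactly the obstruction the paper's proof is built around: it argues by contradiction assuming $|\nabla^- v|-\lambda u\ge\sigma$ on a ball, replaces $v$ by the concave composition $v_h=h(v)$ with $h(t)=(At^{1/\alpha}+1-A)^\alpha$ so that $-\Delta_\infty v_h>0$ holds in the viscosity sense while $|\nabla^- v_h|-\lambda u>\sigma$ is preserved, and then invokes the Crandall--Ishii lemma to produce jets $(p,X)\in \ol J^{2,+}u(x_\vep)$ and $(p,Y)\in \ol J^{2,-}v_h(y_\vep)$ with $X\le Y$; the strict inequality $-\la Yp,p\ra>0$ then transfers to $-\la Xp,p\ra>0$, which is what legitimately excludes the $\infty$-Laplacian alternative for $u$ and forces $|p|\le\lambda u(x_\vep)$, contradicting $2|x_\vep-y_\vep|/\vep\ge \lambda u(y_\vep)+\sigma$. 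Without the concave reparametrization and the Crandall--Ishii step (or some substitute transferring second-order information from $v$ to $u$), your argument does not close. A minor additional remark: the appeal to upper semicontinuity of $|\nabla^- v|$ at the end is unnecessary, since $y_\vep\to x_0$ already places $y_\vep$ in $B_r(x_0)$ for small $\vep$, so a bound at $y_\vep$ bounds $\inf_{B_r(x_0)}|\nabla^- v|$ directly.
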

\begin{proof}
We again begin with the proof of ``$\Rightarrow$''. Assume that $u$ is locally Lipschitz and there exist $x_0$ and $\varphi\in C^2(\Omega)$ such that $u-\varphi$ attains a strict local maximum at $x_0$. If 
$-\Delta_\infty \varphi(x_0)>0$,
then $-\Delta_\infty \varphi>0$ in $B_r(x_0)$
for some $r>0$ small. In view of Lemma \ref{lem eucl cone}, we know that $\varphi$ is $\infty$-superharmonic. 
It thus follows from Definition \ref{def sub} that 
\beq\label{eucl sub1}
\lim_{r\to 0+} \inf_{B_r(x_0)} \left(|\nabla^- \varphi|-\lambda u\right)\leq 0.
\eeq
This implies \eqref{vis eikonal sub}, since $|\nabla^- \varphi|=|\nabla \varphi|$ is continuous.

Let us show the reverse implication ``$\Leftarrow$''. Let $u$ be locally Lipschitz in $\Omega$. Assume that there exist $x_0\in \Omega$, $r>0$ small and a function $\varphi$ that is $\infty$-superharmonic in $B_r(x_0)$ such that $u-\varphi$ attains a strict maximum in $B_r(x_0)$ at $x_0$. It follows that 
\[
\max_{\partial B_r(x_0)} (u-\varphi)< u(x_0)-\varphi(x_0).
\]
 By Lemma \ref{lem eucl cone}, we see that $-\Delta_\infty\varphi\geq 0$ in $B_r(x_0)$ holds in the viscosity sense. We aim to get \eqref{eucl sub1}. 

By contradiction we assume that there exists $\sigma>0$ small such that 
$|\nabla^- \varphi|-\lambda u\geq \sigma$ in $\ol{B_r(x_0)}$. 
In particular, we see that $|\nabla\varphi|\geq \sigma$ holds in the viscosity sense in $B_r(x_0)$. 
Without loss of generality, we may also assume that $0<\varphi<1$ in $\ol{B_r(x_0)}$. For for $A>1$ and $\alpha<1$, we set $\varphi_h:=h(\varphi)$ with $h$ given by  
\[
h(t)=(At^{1\over\alpha}+1-A)^\alpha, \quad t>0.
\]
We approximate $\varphi$ by $\varphi_h$ in $B_r(x_0)$  with $A>1$ and $\alpha<1$ close to $1$, we have
\beq\label{eucl sub add2}
\max_{\partial B_r(x_0)} (u-\varphi_h)< \max_{\ol{B_r(x_0)}} (u-\varphi_h).
\eeq
Since 
\[
h'(t)=A(At^{1\over\alpha}+1-A)^{\alpha-1} t^{1-\alpha\over\alpha}>A^\alpha >1,
\]
\[
h''(t)={1\over \alpha}(\alpha-1)A(A-1) (At^{1\over\alpha}+1-A)^{\alpha-2} t^{1-2\alpha\over 2}<0
\]
hold for all $t=\varphi(x)$ with $x\in B_r(x_0)$, we see that 
 \beq\label{eucl sub add3}
 |\nabla^- \varphi_h|-\lambda u=h'(\varphi) |\nabla^- \varphi|-\lambda u>|\nabla^- \varphi|-\lambda u\geq \sigma,
 \eeq
 and
\beq\label{eucl sub4}
-\Delta_\infty \varphi_h>0
\eeq
hold in $B_r(x_0)$ in the viscosity sense. 
For each $\vep>0$, let us consider  
\[
\Psi_\vep(x, y)=u(x)-\varphi_h(y)-{|x-y|^2\over \vep}
\]
for $x, y\in \ol{B_r(x_0)}$. We can find a maximizer $(x_\vep, y_\vep)$ of $\Psi_\vep$ in $\ol{B_r(x_0)}\times  \ol{B_r(x_0)}$. As in the proof of Proposition \ref{prop eucl super}, we apply a standard argument for comparison principle of viscosity solutions as well as \eqref{eucl sub add2} to deduce that $x_\vep, y_\vep\to x_0$ as $\vep\to 0$ and therefore 
$x_\vep, y_\vep\in B_r(x_0)$ for $\vep>0$ sufficiently small. 

Noticing that 
\[
y\mapsto u(x_\vep)-\varphi_h(y)-{|x-y|^2\over \vep}
\]
attains a maximum at $y=y_\vep$, we apply \eqref{eucl sub add3} to get
\beq\label{eucl sub8}
{2|x_\vep-y_\vep|\over \vep}
\geq \lambda u(y_\vep)+\sigma. 
\eeq

We next adopt the Crandall-Ishii lemma \cite{CIL} to obtain $(p, X)\in \ol{J}^{2, +} u(x_\vep)$ and $(q, Y)\in \ol{J}^{2, -} \varphi_h(y_\vep)$ satisfying 
\beq\label{eucl sub2}
p=q={2(x_\vep-y_\vep)\over \vep},
\eeq
and 
\[
\begin{pmatrix}
X & 0\\
0& -Y
\end{pmatrix}
\leq {C\over \vep} \begin{pmatrix} I & -I\\ -I & I\end{pmatrix}.
\]
for some $C>0$. It follows that 
\beq\label{eucl sub3}
X\leq Y.
\eeq

Using the viscosity inequality \eqref{eucl sub4} at $y_\vep$ in the form of semijets as in Remark \ref{rmk viscosity2}, we are led to 
$
-\la Y q, q\ra>0,
$ 
which by \eqref{eucl sub2} and \eqref{eucl sub3} implies 
-$\la Xp, p\ra>0$.
Applying the alternative definition of subsolutions of \eqref{inf-eigen-lambda} with semijets on $u$, 
we thus get 
\beq\label{eucl sub7}
{2|x_\vep-y_\vep|\over \vep}=|p|\leq \lambda u(x_\vep).
\eeq
Combining \eqref{eucl sub7} and \eqref{eucl sub8}, we end up with 
\[
\lambda u(x_\vep)\geq \lambda u(y_\vep)+\sigma,
\]
We reach a contradiction by letting $\vep\to 0$. 
\end{proof}

\appendix

\section{Eikonal equation in metric spaces}\label{sec:app}
In this appendix, we present several results on the eikonal equation 
in metric spaces that are used in the proofs of Proposition \ref{prop special super} and Theorem \ref{thm min-eigenfun super}. As mentioned in the introduction, for $\lambda>0$ the equation $|\nabla u|-\lambda u=0$ related to the eigenvalue problem can be turned into the standard eikonal equation $|\nabla U|=\lambda$ via the transformation $U=\log u$.

We thus consider the eikonal equation
\beq\label{eikonal eq}
|\nabla u|=\lambda \quad \text{in $\Omega$}
\eeq
with the Dirichlet boundary condition $u=g$ on $\pO$,
where $\lambda>0$ and $g\in C(\pO)$ are given. We still assume that $(\X, d)$ is a proper geodesic space and $\Omega\subsetneq\X$ is a bounded domain. 

In \cite{LShZ}, the notion of Monge solutions of the Hamilton-Jacobi equations in the Euclidean space \cite{NeSu, BrDa} is also generalized for general length spaces. A locally Lipschitz function in $\Omega$ is called a Monge solution (resp., Monge supersolution, Monge subsolution) of \eqref{eikonal eq} if $|\nabla^- u|= \lambda$ (resp., $\geq \lambda$, $\leq \lambda$) in $\Omega$.

An optimal control interpretation is provided in \cite[Theorem 4.2]{GHN} to construct solutions of general eikonal equations in metric spaces. It is shown in \cite{LShZ} that such solutions are actually Monge solutions. For our particular purpose in this work, we below build a slightly different Monge solution $u$ satisfying $u\leq g$ on $\pO$, which is simply the celebrated McShane-Whitney extension in $\Oba$ with Lipschitz constant $\lambda$.

\begin{thm}[Construction of a Monge solution]\label{thm control}
Suppose that $(\X, d)$ is a proper geodesic space and $\Omega\subsetneq \X$ is a bounded domain. Assume that $\lambda>0$ and $g\in C(\pO)$. Let $u$ be given by \eqref{control eq}.
Then $u$ is Lipschitz in $\Oba$ and $|\nabla^- u|=|\nabla u|=\lambda$
holds in $\Omega$. 
\end{thm}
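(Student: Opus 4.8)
The plan is to handle the two assertions in turn: first the global $\lambda$-Lipschitz bound for $u$ on $\Oba$, which at once gives $|\nabla u|\le \lambda$ and hence $|\nabla^- u|\le \lambda$ in $\Omega$; then the opposite inequality $|\nabla^- u|\ge \lambda$ in $\Omega$, after which $|\nabla^- u|\le |\nabla u|\le \lambda$ forces $|\nabla^- u|=|\nabla u|=\lambda$ everywhere in $\Omega$.

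For the Lipschitz estimate I would argue straight from the formula \eqref{control eq}. Since $\X$ is proper and $\Omega$ is bounded, $\Oba$ is compact, so $\partial\Omega$ is a nonempty compact set and the minimum in \eqref{control eq} is attained; in particular $u$ is real valued on $\Oba$. For $x_1,x_2\in \Oba$ and $y\in \partial\Omega$ the triangle inequality gives $g(y)+\lambda d(x_1,y)\le g(y)+\lambda d(x_2,y)+\lambda d(x_1,x_2)$, and taking the minimum over $y\in\partial\Omega$ yields $u(x_1)\le u(x_2)+\lambda d(x_1,x_2)$; exchanging $x_1$ and $x_2$ gives $|u(x_1)-u(x_2)|\le \lambda d(x_1,x_2)$. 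Thus $u$ is $\lambda$-Lipschitz on $\Oba$, and consequently $|\nabla u|\le \lambda$ at every point of $\Omega$, whence also $|\nabla^- u|\le\lambda$ there.

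For the reverse bound, fix $x_0\in \Omega$ and choose, by compactness of $\partial\Omega$, a point $y_0\in \partial\Omega$ with $u(x_0)=g(y_0)+\lambda d(x_0,y_0)$. Since $\Omega$ is open, $x_0\notin \partial\Omega$, so $\rho:=d(x_0,y_0)\ge d(x_0,\partial\Omega)>0$. Using that $(\X,d)$ is geodesic, pick a length-minimizing curve $\gamma\colon[0,\rho]\to\X$, parametrized by arc length, with $\gamma(0)=x_0$ and $\gamma(\rho)=y_0$; splitting $\gamma$ at $t$ and using $\ell(\gamma)=\rho$ one checks in the standard way that $d(x_0,\gamma(t))=t$ and $d(\gamma(t),y_0)=\rho-t$ for all $t\in[0,\rho]$. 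As $\Omega$ is open and $\gamma$ is continuous with $\gamma(0)\in\Omega$, we have $\gamma(t)\in\Omega$ for all sufficiently small $t>0$. For such $t$, $y_0$ is an admissible competitor in \eqref{control eq} evaluated at $\gamma(t)$, so
\[
u(\gamma(t))\le g(y_0)+\lambda d(\gamma(t),y_0)=g(y_0)+\lambda(\rho-t)=u(x_0)-\lambda t.
\]
Hence $[u(\gamma(t))-u(x_0)]_-\ge \lambda t$, and therefore
\[
\frac{[u(\gamma(t))-u(x_0)]_-}{d(x_0,\gamma(t))}\ge \lambda
\]
for all small $t>0$. Letting $t\to 0^+$ along $\gamma$ in the definition \eqref{semislope eq} of $|\nabla^- u|$ gives $|\nabla^- u|(x_0)\ge \lambda$. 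Since $x_0\in\Omega$ was arbitrary, this together with the previous paragraph yields $|\nabla^- u|=|\nabla u|=\lambda$ in $\Omega$.

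Essentially every step here is routine; the one point requiring a little care is the construction that isolates a direction of steepest descent for $u$ at $x_0$ — namely selecting the active boundary point $y_0$ in \eqref{control eq}, using the geodesic toward $y_0$ to produce, for each small $t$, a point at distance exactly $t$ from $x_0$ along which $u$ drops by at least $\lambda t$, and checking that these points stay inside the open set $\Omega$. The complementary facts (the global $\lambda$-Lipschitz estimate and the existence of the minimizer $y_0$) are immediate from the triangle inequality and the compactness of $\Oba$.
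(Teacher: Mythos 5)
Your proof is correct and follows essentially the same route as the paper: the triangle inequality gives the global $\lambda$-Lipschitz bound (hence $|\nabla u|\le\lambda$), and the lower bound $|\nabla^- u|\ge\lambda$ comes from walking along a geodesic toward the boundary point $y_0$ that attains the minimum in \eqref{control eq}, which is exactly the mechanism the paper uses inside its proof of the dynamic programming identity \eqref{dpp}. The only difference is cosmetic: you apply that geodesic argument directly at $x_0$ instead of first recording \eqref{dpp} for general subdomains and then specializing to small balls.
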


\begin{proof}
We first claim that, for any subdomain $\O$ with $\ol{\O}\subset \Omega$ and any $x\in \O$, 
\beq\label{dpp}
u(x)=\min_{z\in \ol{\O}} \{u(z)+ \lambda d(x, z)\}.
\eeq
For any $z\in \ol{\O}$, by \eqref{control eq} there exists $y_z\in \pO$ such that 
\beq\label{control eq2}
u(z)\geq g(y_z)+\lambda d(z, y_z). 
\eeq
It follows from \eqref{control eq} again that 
\beq\label{control eq3}
u(x)\leq g(y_z)+\lambda d(x, y_z)\leq g(y_z)+\lambda d(z, y_z)+\lambda d(x, z)\leq u(z)+\lambda d(x, z),
\eeq
which, due to the arbitrariness of $z\in \ol{\O}$ yields
\[
u(x)\leq \min_{z\in \ol{\O}} \{u(z)+ \lambda d(x, z)\}.
\]
On the other hand, we can find $y_x\in \pO$ such that 
\beq\label{control1}
u(x)\geq g(y_x)+\lambda d(x, y_x).
\eeq
Take a geodesic $\gamma$ connecting $x$ and $y_x$, i.e., $\gamma(0)=x$, $\gamma(1)=y_x$ and $\ell(\gamma)=d(x, y_x)$. There must exist a point of intersection $z_x$ of $\gamma$ and $\partial \O$, which satisfies 
\[
d(x, y_x)=d(x, z_x)+d(z_x, y_x).
\]
In view of \eqref{control1}, we thus can apply \eqref{control eq} once again to get
\[
u(x)\geq g(y_x)+\lambda d(z_x, y_x)+\lambda d(x, z_x)\geq u(z_x)+\lambda d(x, z_x)\geq \min_{z\in \ol{\O}} \{u(z)+ \lambda d(x, z)\}.
\]
Our proof of \eqref{dpp} is now complete. 

As mentioned before, \eqref{control eq} is just the McShane-Whitney Lipschitz extension. We can certainly obtain the Lipschitz regularity:
\beq\label{control eq4}
|u(x)-u(z)|\leq \lambda d(x, z) \quad \text{for any $x, z\in \Oba$.}
\eeq
In fact, the argument resulting in \eqref{control eq3} applies to all points $x, z\in\Oba$. Even if $x$ or $z$ appears on $\pO$, we can still get $y_z\in \pO$ satisfying \eqref{control eq2} and thus obtain \eqref{control eq3}. Interchanging the roles of $x$ and $z$ in $\Oba$, we get \eqref{control eq4} immediately. 

It is then clear that $|\nabla u|\leq \lambda$ in $\Omega$.  Let us now take any $x\in \Omega$ and any $r>0$ small such that $B_r(x)\subset \Omega$. Using \eqref{dpp} with $\O=B_r(x)$ we have
\[
\sup_{z\in B_r(x)} \big(u(x)-u(z) - \lambda d(x, z)\big)\geq 0,
\]
which implies that 
\[
\sup_{z\in B_r(x)\setminus \{x\}} \frac{u(x)-u(z)}{d(x, z)} \geq \lambda.
\]
Passing to the limit as $r\to 0$, we end up with $|\nabla^- u|(x)\geq \lambda$
for any $x\in \Omega$. Since $|\nabla^- u|\leq |\nabla u|$, we obtain $|\nabla^- u|=|\nabla u|= \lambda$ in $\Omega$, as desired. 
\end{proof}

The function $u$ defined by \eqref{control eq} obviously satisfies $u\leq g$ on $\pO$. Note that in general one cannot expect the $u= g$ holds on $\pO$ even in the Euclidean space. A simple example is as follows. Let $\X$ be the closed interval $[0, 1]$ equipped with the standard Euclidean metric and $\Omega=(0, 1)$. Assume that $0<\lambda <1$, $g(0)=0$ and $g(1)=1$. Then the function $u$ as in \eqref{control eq} can be directly computed: 
\[
u(x)=\lambda x, \quad x\in \Oba=\X=[0, 1].
\]
In particular, we have $u(1)= \lambda<g(1)$.  In general, one needs additional assumptions to guarantee the Dirichlet boundary condition in general metric spaces such as the $\lambda$-Lipschitz continuity of $g$ on $\pO$. See more details in \cite[Section 3.3]{LShZ}.

For our application in this work, we next present a comparison theorem for the Lipschitz Monge sub- and supersolutions in a proper geodesic space.  

\begin{thm}[Comparison principle for eikonal equation]\label{thm comparison monge}
Suppose that $(\X, d)$ is a proper geodesic space and $\Omega\subsetneq \X$ is a bounded domain. Let $u$ and $v$ be respectively a Monge subsolution and a Monge supersolution of \eqref{eikonal eq} with $\lambda >0$. Assume in addition that $u$ and $v$ are continuous in $\Oba$. 
If $u\leq v$ on $\pO$, then $u\le v$ in $\Oba$. 
\end{thm}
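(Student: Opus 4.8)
The plan is to compare $u$ and $v$ by following a ``steepest descent'' of the supersolution $v$ from an arbitrary interior point all the way to $\pO$, where the ordering is given, combining this with an integrated form of the subsolution inequality. The whole argument rests on two one-sided estimates. The first concerns the subsolution: if $\gamma$ is a rectifiable curve lying in $\Omega$ and joining $x$ to $z$, then $u(x)-u(z)\le \lambda\,\ell(\gamma)$. To see this I would parametrize $\gamma$ by arc length on $[0,\ell(\gamma)]$ and set $g(t)=u(\gamma(t))$; since $d(\gamma(s),\gamma(t))\le |s-t|$, the definition of the subslope gives $\limsup_{s\to t^+}\frac{g(t)-g(s)}{s-t}\le |\nabla^- u|(\gamma(t))\le\lambda$ for every $t$, so $-g(t)-\lambda t$ has nonpositive upper right Dini derivative; the standard fact that such a continuous function is nonincreasing yields the claim. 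Summing over a geodesic polygon contained in $\Omega$ then controls $u$ along such a path.

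The second estimate is a ball inequality for the supersolution: whenever $\ol{B_r(x_0)}\subset\Omega$ with $r>0$, one has $\min_{\ol{B_r(x_0)}} v\le v(x_0)-\lambda r$. I would argue by contradiction: put $\mu:=v(x_0)-\min_{\ol{B_r(x_0)}}v\in[0,\lambda r)$ and consider the tilted function $\psi(x):=(v(x_0)-v(x))-\tfrac{\mu}{r}\,d(x_0,x)$. On $\partial B_r(x_0)$ one has $\psi\le 0=\psi(x_0)$, so $\psi$ attains its maximum over $\ol{B_r(x_0)}$ at some interior point $x_\ast$. Comparing $\psi(x)\le\psi(x_\ast)$ for $x$ near $x_\ast$ and using the triangle inequality gives $v(x_\ast)-v(x)\le\tfrac{\mu}{r}\,d(x,x_\ast)$, hence $|\nabla^- v|(x_\ast)\le \mu/r<\lambda$, contradicting that $v$ is a Monge supersolution. (It is exactly here that $\lambda>0$ enters, in the guise of $v$ having no interior local minimum.)

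With these in hand, I would fix $x_0\in\Omega$ — for boundary points the conclusion is the hypothesis — and build inductively points $x_i\in\Omega$, setting $\rho_i:=d(x_i,\pO)>0$ and choosing $x_{i+1}$ to be a minimizer of $v$ over $\ol{B_{\rho_i/2}(x_i)}$, which lies in $\Omega$ because $\X$ is geodesic. The ball estimate gives $v(x_i)-v(x_{i+1})\ge\tfrac{\lambda}{2}\rho_i\ge \lambda\, d(x_i,x_{i+1})$. Since $v$ is bounded on $\Oba$, telescoping forces $\sum_i\rho_i<\infty$, so $\rho_i\to 0$ and $\sum_i d(x_i,x_{i+1})<\infty$; by completeness of $\X$ the $x_i$ converge to some $x_\infty$ with $d(x_\infty,\pO)=\lim_i\rho_i=0$, i.e.\ $x_\infty\in\pO$. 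Joining the $x_i$ by geodesic segments (each contained in $\Omega$) and applying the integrated subsolution bound along them gives $u(x_0)-u(x_\infty)\le \lambda\sum_i d(x_i,x_{i+1})\le v(x_0)-v(x_\infty)$, where the last inequality is the telescoped descent of $v$. Since $u(x_\infty)\le v(x_\infty)$ on $\pO$, rearranging yields $u(x_0)\le v(x_0)$, and since $x_0\in\Omega$ was arbitrary we conclude $u\le v$ in $\Oba$.

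I expect no single step to be genuinely hard; the point requiring care is arranging that the descent actually reaches $\pO$ rather than stalling at an interior accumulation point, and the resolution is precisely the quantitative drop $v(x_i)-v(x_{i+1})\gtrsim \lambda\rho_i$ against the boundedness of $v$, which simultaneously forces $\rho_i\to 0$ and makes $(x_i)$ Cauchy. The other mildly delicate ingredient is proving the ball estimate for a mere Monge supersolution (rather than an $\infty$-superharmonic function as in Lemma~\ref{lem circular}); the tilted-test-function trick above circumvents having to follow descent curves, and it is worth noting that the inclusion $\ol{B_{\rho_i/2}(x_i)}\subset\Omega$ and the existence of the connecting geodesics both use that $(\X,d)$ is geodesic.
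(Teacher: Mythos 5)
Your argument is correct, but it is genuinely different from the one in the paper. The paper's proof is a one-point maximum-principle argument: after normalizing $u,v\ge 0$ it suffices to show $\mu u\le v$ for every $\mu\in(0,1)$; by continuity on the compact set $\Oba$ and the boundary ordering, a positive supremum of $\mu u-v$ would be attained at an interior point $x$, and there the Monge supersolution property of $v$ produces a sequence $y_n\to x$ along which $v(x)-v(y_n)\ge(\lambda-o(1))d(x,y_n)$; the maximality transfers this to $u$, giving $\lambda\le\mu|\nabla^- u|(x)\le\mu\lambda$, a contradiction. Your proof instead takes the optimal-control/descent route: you integrate the subsolution inequality along curves (via a Dini-derivative lemma), prove a quantitative ball estimate $\min_{\ol{B_r(x_0)}}v\le v(x_0)-\lambda r$ for Monge supersolutions by a tilted maximum argument (a nice self-contained analogue of Lemma \ref{lem circular} that avoids comparison with cones), and chain these drops so that the descent sequence is forced to reach $\pO$, where the ordering is known. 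All the steps check out: the inclusion $\ol{B_{\rho_i/2}(x_i)}\subset\Omega$ and the containment of the connecting geodesics in $\Omega$ do follow from the geodesic property as you note, the interiority of the maximizer $x_\ast$ in the ball lemma is handled correctly (either the max is positive, hence interior, or it is $0$ and attained at $x_0$), and properness supplies both the needed compactness and completeness. The trade-off: the paper's proof is shorter and needs the supersolution property only at one point, while yours is more constructive and essentially recovers the representation-formula bound $u(x_0)\le v(x_\infty)+\lambda\sum_i d(x_i,x_{i+1})$ behind Theorem \ref{thm control}, at the cost of the extra machinery of the descent and its convergence.
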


\begin{proof}
It is clear that $u$ and $v$ are bounded, since $u, v\in C(\Oba)$ and $\Omega$ is bounded. We therefore may assume that $u, v\geq 0$ by adding a positive constant to them. It suffices to show that $\mu u\le v$ in $\Omega$ for all $\mu\in (0,1)$. 

Assume by contradiction that 
there exists $\mu\in (0,1)$ such that $\sup_{\Omega}(\mu u-v)>0$. 
Due to the assumption that $u\leq v$ on $\pO$, we can find $x\in \Omega$ such that 
\beq\label{comparison monge1}
\sup_{\Omega}(\mu u-v)=\mu u(x)-v(x). 
\eeq
Since $v$ is a Monge supersolution of \eqref{eikonal eq}, there exists 
a sequence $\{y_n\}\subset \Omega$ converging to $x$ as $n\to \infty$ such that 
\[
\lim_{n\to \infty}\frac{[v(y_n)-v(x)]_-}{d(x, y_n)}\geq \lambda>0.
\]
Hence, by \eqref{comparison monge1} we have
\[
\begin{aligned}
\lambda \leq \limsup_{n\to \infty}\frac{\mu (u(x)-u(y_n))}{d(x, y_n)}
\leq \limsup_{y\to x}\frac{\mu (u(x)-u(y))}{d(x, y)}\leq \mu |\nabla^- u|(x).
\end{aligned}
\]
Noticing that $u$ is a Monge subsolution, we thus get $\lambda\leq \mu \lambda$, which is clearly a contradiction. 
\end{proof}

One can show similar comparison results in the case of general length spaces without assuming the spaces to be proper; see \cite[Theorem 4.2]{LShZ}. In this general case, the assumptions are slightly more complicated and the proof, involving Ekeland's variational principle 
 is more technical due to the possible lack of local compactness of the metric space.

\bibliographystyle{abbrv}%

\begin{thebibliography}{10}

\bibitem{AF}
L.~Ambrosio and J.~Feng.
\newblock On a class of first order {H}amilton-{J}acobi equations in metric
  spaces.
\newblock {\em J. Differential Equations}, 256(7):2194--2245, 2014.

\bibitem{AHo}
L.~Ambrosio and S.~Honda.
\newblock New stability results for sequences of metric measure spaces with
  uniform {R}icci bounds from below.
\newblock In {\em Measure theory in non-smooth spaces}, Partial Differ. Equ.
  Meas. Theory, pages 1--51. De Gruyter Open, Warsaw, 2017.

\bibitem{AS2}
S.~N. Armstrong and C.~K. Smart.
\newblock A finite difference approach to the infinity {L}aplace equation and
  tug-of-war games.
\newblock {\em Trans. Amer. Math. Soc.}, 364(2):595--636, 2012.

\bibitem{ArCrJu}
G.~Aronsson, M.~G. Crandall, and P.~Juutinen.
\newblock A tour of the theory of absolutely minimizing functions.
\newblock {\em Bull. Amer. Math. Soc. (N.S.)}, 41(4):439--505, 2004.

\bibitem{BeKa}
M.~Belloni and B.~Kawohl.
\newblock The pseudo-{$p$}-{L}aplace eigenvalue problem and viscosity solutions
  as {$p\to\infty$}.
\newblock {\em ESAIM Control Optim. Calc. Var.}, 10(1):28--52, 2004.

\bibitem{BKJ}
M.~Belloni, B.~Kawohl, and P.~Juutinen.
\newblock The {$p$}-{L}aplace eigenvalue problem as {$p\to\infty$} in a
  {F}insler metric.
\newblock {\em J. Eur. Math. Soc. (JEMS)}, 8(1):123--138, 2006.

\bibitem{BNV}
H.~Berestycki, L.~Nirenberg, and S.~R.~S. Varadhan.
\newblock The principal eigenvalue and maximum principle for second-order
  elliptic operators in general domains.
\newblock {\em Communications on Pure and Applied Mathematics}, 47(1):47--92,
  1994.

\bibitem{Bh}
T.~Bhattacharya.
\newblock An elementary proof of the {H}arnack inequality for non-negative
  infinity-superharmonic functions.
\newblock {\em Electron. J. Differential Equations}, pages No. 44, 8, 2001.

\bibitem{Bi4}
T.~Bieske.
\newblock The {C}arnot-{C}arath\'{e}odory distance vis-\`a-vis the eikonal
  equation and the infinite {L}aplacian.
\newblock {\em Bull. Lond. Math. Soc.}, 42(3):395--404, 2010.

\bibitem{BiDrMa}
T.~Bieske, F.~Dragoni, and J.~Manfredi.
\newblock The {C}arnot-{C}arath\'{e}odory distance and the infinite
  {L}aplacian.
\newblock {\em J. Geom. Anal.}, 19(4):737--754, 2009.

\bibitem{BiDe1}
I.~Birindelli and F.~Demengel.
\newblock First eigenvalue and maximum principle for fully nonlinear singular
  operators.
\newblock {\em Adv. Differential Equations}, 11(1):91--119, 2006.

\bibitem{BiDe2}
I.~Birindelli and F.~Demengel.
\newblock Eigenvalue, maximum principle and regularity for fully non linear
  homogeneous operators.
\newblock {\em Commun. Pure Appl. Anal.}, 6(2):335--366, 2007.

\bibitem{BoRoSa}
D.~Bonheure, J.~D. Rossi, and N.~Saintier.
\newblock The limit as {$p\to\infty$} in the eigenvalue problem for a system of
  {$p$}-{L}aplacians.
\newblock {\em Ann. Mat. Pura Appl. (4)}, 195(5):1771--1785, 2016.

\bibitem{BBD}
G.~Bouchitt\'{e}, G.~Buttazzo, and L.~De Pascale.
\newblock A $p$-Laplacian approximation for some mass optimization problems.
\newblock {\em J. Optim. Theory Appl.} 118(1):1--25, 2003.

\bibitem{BrDa}
A.~Briani and A.~Davini.
\newblock Monge solutions for discontinuous {H}amiltonians.
\newblock {\em ESAIM Control Optim. Calc. Var.}, 11(2):229--251 (electronic),
  2005.

\bibitem{BuK}
L.~Bungert and Y.~Korolev.
\newblock Eigenvalue problems in $\mathrm{L}^\infty$: optimality conditions,
  duality, and relations with optimal transport.
\newblock preprint, https://arxiv.org/abs/2107.12117, 2021.

\bibitem{ChDeJi}
T.~Champion, L.~De Pascale, and C.~Jimenez.
\newblock The $\infty$-eigenvalue problem and a problem of optimal transportation,
\newblock {\em Commun. Appl. Anal.}, 13(4), 547--565, 2009.

\bibitem{ChPa}
F.~Charro and E.~Parini.
\newblock Limits as {$p\to\infty$} of {$p$}-{L}aplacian eigenvalue problems
  perturbed with a concave or convex term.
\newblock {\em Calc. Var. Partial Differential Equations}, 46(1-2):403--425,
  2013.

\bibitem{ChPe}
F.~Charro and I.~Peral.
\newblock Limit branch of solutions as {$p\to\infty$} for a family of
  sub-diffusive problems related to the {$p$}-{L}aplacian.
\newblock {\em Comm. Partial Differential Equations}, 32(10-12):1965--1981,
  2007.

\bibitem{CEG}
M.~G. Crandall, L.~C. Evans, and R.~F. Gariepy.
\newblock Optimal {L}ipschitz extensions and the infinity {L}aplacian.
\newblock {\em Calc. Var. Partial Differential Equations}, 13(2):123--139,
  2001.

\bibitem{CIL}
M.~G. Crandall, H.~Ishii, and P.-L. Lions.
\newblock User's guide to viscosity solutions of second order partial
  differential equations.
\newblock {\em Bull. Amer. Math. Soc. (N.S.)}, 27(1):1--67, 1992.

\bibitem{CrFr5}
G.~Crasta and I.~Fragal\`a.
\newblock On the characterization of some classes of proximally smooth sets.
\newblock {\em ESAIM Control Optim. Calc. Var.}, 22(3):710--727, 2016.

\bibitem{CrFr4}
G.~Crasta and I.~Fragal\`a.
\newblock Rigidity results for variational infinity ground states.
\newblock {\em Indiana Univ. Math. J.}, 68(2):353--367, 2019.

\bibitem{DaRoSa}
J.~V. da~Silva, J.~D. Rossi, and A.~M. Salort.
\newblock Maximal solutions for the {$\infty$}-eigenvalue problem.
\newblock {\em Adv. Calc. Var.}, 12(2):181--191, 2019.



\bibitem{EsKaNiTo}
L.~Esposito, B.~Kawohl, C.~Nitsch, and C.~Trombetti.
\newblock The {N}eumann eigenvalue problem for the {$\infty$}-{L}aplacian.
\newblock {\em Atti Accad. Naz. Lincei Rend. Lincei Mat. Appl.},
  26(2):119--134, 2015.
  
\bibitem{EG}
L.~C. Evans and W.~Gangbo.
\newblock Differential equations methods for the Monge-Kantorovich mass transfer problem. 
\newblock {\em Mem. Amer. Math. Soc.}, 137(653):viii+66, 1999.

\bibitem{FIN}
N.~Fukagai, M.~Ito, and K.~Narukawa.
\newblock Limit as {$p\to\infty$} of {$p$}-{L}aplace eigenvalue problems and
  {$L^\infty$}-inequality of the {P}oincar\'{e} type.
\newblock {\em Differential Integral Equations}, 12(2):183--206, 1999.

\bibitem{GaS2}
W.~Gangbo and A.~{\'S}wi{\polhk{e}}ch.
\newblock Optimal transport and large number of particles.
\newblock {\em Discrete Contin. Dyn. Syst.}, 34(4):1397--1441, 2014.

\bibitem{GaS}
W.~Gangbo and A.~{\'S}wi{\polhk{e}}ch.
\newblock Metric viscosity solutions of {H}amilton-{J}acobi equations depending
  on local slopes.
\newblock {\em Calc. Var. Partial Differential Equations}, 54(1):1183--1218,
  2015.

\bibitem{GaMaPeRo1}
J.~Garc\'{\i}a-Azorero, J.~J. Manfredi, I.~Peral, and J.~D. Rossi.
\newblock The {N}eumann problem for the {$\infty$}-{L}aplacian and the
  {M}onge-{K}antorovich mass transfer problem.
\newblock {\em Nonlinear Anal.}, 66(2):349--366, 2007.

\bibitem{GaMaPeRo2}
J.~Garcia-Azorero, J.~J. Manfredi, I.~Peral, and J.~D. Rossi.
\newblock Partial differential equations---the limit as {$p\to\infty$} for the
  {$p$}-{L}aplacian with mixed boundary conditions and the mass transport
  problem through a given window.
\newblock {\em Atti Accad. Naz. Lincei Rend. Lincei Mat. Appl.},
  20(2):111--126, 2009.
  
  

\bibitem{GHN}
Y.~Giga, N.~Hamamuki, and A.~Nakayasu.
\newblock Eikonal equations in metric spaces.
\newblock {\em Trans. Amer. Math. Soc.}, 367(1):49--66, 2015.

\bibitem{Hon}
S.~Honda.
\newblock {C}heeger constant, $p$-{L}aplacian, and {G}romov-{H}ausdorff
  convergence.
\newblock {\em preprint}, 2013.

\bibitem{HSY}
R.~Hynd, C.~K. Smart, and Y.~Yu.
\newblock Nonuniqueness of infinity ground states.
\newblock {\em Calc. Var. Partial Differential Equations}, 48(3-4):545--554,
  2013.
  
\bibitem{Ju}
P.~Juutinen. 
\newblock Absolutely minimizing Lipschitz extensions on a metric space.
\newblock {\em Ann. Acad. Sci. Fenn. Math.}, 27(1):57--67, 2002.


\bibitem{JuLi}
P.~Juutinen and P.~Lindqvist.
\newblock On the higher eigenvalues for the {$\infty$}-eigenvalue problem.
\newblock {\em Calc. Var. Partial Differential Equations}, 23(2):169--192,
  2005.

\bibitem{JLM1}
P.~Juutinen, P.~Lindqvist, and J.~J. Manfredi.
\newblock The {$\infty$}-eigenvalue problem.
\newblock {\em Arch. Ration. Mech. Anal.}, 148(2):89--105, 1999.

\bibitem{JuSh}
P.~Juutinen and N.~Shanmugalingam.
\newblock Equivalence of {AMLE}, strong {AMLE}, and comparison with cones in
  metric measure spaces.
\newblock {\em Math. Nachr.}, 279(9-10):1083--1098, 2006.

\bibitem{LiLi1}
E.~Lindgren and P.~Lindqvist.
\newblock Fractional eigenvalues.
\newblock {\em Calc. Var. Partial Differential Equations}, 49(1-2):795--826,
  2014.

\bibitem{LiLi2}
E.~Lindgren and P.~Lindqvist.
\newblock Infinity-harmonic potentials and their streamlines.
\newblock {\em Discrete Contin. Dyn. Syst.}, 39(8):4731--4746, 2019.

\bibitem{LiLi3}
E.~Lindgren and P.~Lindqvist.
\newblock The gradient flow of infinity-harmonic potentials.
\newblock {\em Adv. Math.}, 378:107526, 24, 2021.

\bibitem{LiLi4}
E.~Lindgren and P.~Lindqvist.
\newblock On $\infty$-ground states in the plane.
\newblock {\em preprint}, 2021.

\bibitem{Lindbook}
P.~Lindqvist.
\newblock {\em Notes on the infinity {L}aplace equation}.
\newblock SpringerBriefs in Mathematics. BCAM Basque Center for Applied
  Mathematics, Bilbao; Springer, [Cham], 2016.

\bibitem{LiM1}
P.~Lindqvist and J.~J. Manfredi.
\newblock The {H}arnack inequality for {$\infty$}-harmonic functions.
\newblock {\em Electron. J. Differential Equations}, pages No. 04, approx. 5
  pp.\, 1995.

\bibitem{LMit2}
Q.~Liu and A.~Mitsuishi.
\newblock Simplicity of principal eigenvalue for infinity {L}aplacian on metric
  graphs.
\newblock {\em in preparation}.

\bibitem{LShZ}
Q.~Liu, N.~Shanmugalingam, and X.~Zhou.
\newblock Equivalence of solutions of eikonal equation in metric spaces.
\newblock {\em J. Differential Equations}, 272:979--1014, 2021.

\bibitem{LW1}
G.~Lu and P.~Wang. 
\newblock Inhomogeneous infinity Laplace equation. 
\newblock{\em Adv. Math.}, 217(4):1838--1868, 2008.

\bibitem{LW2}
G.~Lu and P.~Wang. 
\newblock Infinity Laplace equation with non-trivial right-hand side.
\newblock {Electron. J. Differential Equations}, 77, 12 pp, 2010.

\bibitem{Mit}
A.~Mitsuishi.
\newblock Certain min-max values related to the $p$-energy and packing radii of
  {R}iemannian manifolds and metric measure spaces.
\newblock {\em preprint}, 2019.

\bibitem{NeSu}
R.~T. Newcomb, II and J.~Su.
\newblock Eikonal equations with discontinuities.
\newblock {\em Differential Integral Equations}, 8(8):1947--1960, 1995.

\bibitem{Pat}
S.~Patrizi.
\newblock The principal eigenvalue of the {$\infty$}-{L}aplacian with the
  {N}eumann boundary condition.
\newblock {\em ESAIM Control Optim. Calc. Var.}, 17(2):575--601, 2011.

\bibitem{PSSW}
Y.~Peres, O.~Schramm, S.~Sheffield, and D.~B. Wilson.
\newblock Tug-of-war and the infinity {L}aplacian.
\newblock {\em J. Amer. Math. Soc.}, 22(1):167--210, 2009.

\bibitem{PeRo}
M.~P\'{e}rez-Llanos and J.~D. Rossi.
\newblock The behaviour of the {$p(x)$}-{L}aplacian eigenvalue problem as
  {$p(x)\to\infty$}.
\newblock {\em J. Math. Anal. Appl.}, 363(2):502--511, 2010.

\bibitem{Yu1}
Y.~Yu.
\newblock Some properties of the ground states of the infinity {L}aplacian.
\newblock {\em Indiana Univ. Math. J.}, 56(2):947--964, 2007.

\end{thebibliography}

\end{document}